\begin{document}

\newtheorem{theorem}{Theorem}[section]
\newtheorem{lemma}[theorem]{Lemma}
\newtheorem{proposition}[theorem]{Proposition}
\newtheorem{corollary}[theorem]{Corollary}
\newtheorem{conjecture}[theorem]{Conjecture}
\newtheorem{question}[theorem]{Question}
\newtheorem{problem}[theorem]{Problem}
\newtheorem*{claim}{Claim}
\newtheorem*{criterion}{Criterion}
\newtheorem*{random_folded_theorem}{Random $f$-folded Surface Theorem~\ref{thm:random_f_folded_theorem}}
\newtheorem*{double_norm_theorem}{Double Norm Theorem~\ref{theorem:double_Gromov_norm}}
\newtheorem*{traintrack_theorem}{Traintrack Rationality Theorem~\ref{theorem:traintrack_rationality}}
\theoremstyle{definition}
\newtheorem{definition}[theorem]{Definition}
\newtheorem{construction}[theorem]{Construction}
\newtheorem{notation}[theorem]{Notation}
\newtheorem{object}[theorem]{Object}
\newtheorem{operation}[theorem]{Operation}

\theoremstyle{remark}
\newtheorem{remark}[theorem]{Remark}
\newtheorem{example}[theorem]{Example}

\numberwithin{equation}{section}

\newcommand\id{\textnormal{id}}

\newcommand\Z{\mathbb Z}
\newcommand\R{\mathbb R}
\newcommand\Q{\mathbb Q}
\newcommand\C{\mathbb C}
\newcommand\CC{\mathcal C}
\newcommand\Ss{\mathcal S}
\newcommand\A{\mathbb A}
\newcommand\E{\mathcal E}
\newcommand\F{\mathcal F}
\newcommand\T{\mathbb T}
\newcommand\RR{\mathcal R}
\newcommand\Sh{\mathcal S}
\newcommand\Aa{\mathcal A}
\newcommand\TT{\mathcal T}
\def\L{\mathcal L}
\def\1{{\bf 1}}
\def\G{\sqcup}
\def\I{\mathcal I}
\def\Oh{\widehat{O}}
\def\d{\textnormal{d}}
\def\u{\underline}
\def\SS{\Sigma}
\newcommand\Cee{\textnormal{\bf C}}
\newcommand\Bee{\textnormal{\bf B}}
\newcommand\Aitch{\textnormal{\bf H}}
\newcommand\fold{\textnormal{fold}}
\newcommand\rank{\textnormal{rank}}
\newcommand\CAT{\textnormal{CAT}}
\newcommand\fix{\textnormal{fix}}
\newcommand\ab{\textnormal{ab}}
\newcommand\area{\textnormal{area}}
\newcommand\cl{\textnormal{cl}}
\newcommand\scl{\textnormal{scl}}
\newcommand{\con}[1]{C_{#1}}
\newcommand{\length}{\textnormal{length}}
\newcommand{\weight}{\textnormal{weight}}
\newcommand{\norm}{\textnormal{norm}}
\newcommand{\genus}{\textnormal{genus}}
\newcommand{\Mod}{\textnormal{Mod}}
\newcommand{\Tor}{\textnormal{Tor}}
\newcommand{\inte}{\textnormal{int}}
\newcommand{\valence}{\textnormal{valence}}
\newcommand\til{\widetilde}
\newcommand\SL{\textnormal{SL}}
\newcommand\PSL{\textnormal{PSL}}
\newcommand\Sp{\textnormal{Sp}}
\newcommand\GL{\textnormal{GL}}
\newcommand\pre{\textnormal{pre}}
\newcommand{\Teich}{\textnormal{Teich}}
\def\rot{\textnormal{rot}}
\def\area{\textnormal{area}}
\def\im{\textnormal{im}}
\def\Aut{\textnormal{Aut}}
\def\End{\textnormal{End}} 
\def\Inn{\textnormal{Inn}}
\def\Out{\textnormal{Out}}
\def\Fat{\textnormal{Fat}}

\title{Surface subgroups from linear programming}

\author{Danny Calegari}
\address{University of Chicago \\ Chicago, Ill 60637 USA}
\email{dannyc@math.uchicago.edu}
\author{Alden Walker}
\address{University of Chicago \\ Chicago, Ill 60637 USA}
\email{akwalker@math.uchicago.edu}
\date{\today}

\begin{abstract}
We show that certain classes of graphs of free groups contain surface subgroups, including
groups with positive $b_2$ obtained by doubling free groups along collections of subgroups,
and groups obtained by ``random'' ascending HNN extensions of free groups. A special
case is the HNN extension associated to the endomorphism of a rank 2 free group 
sending $a$ to $ab$ and $b$ to $ba$; this example (and the random examples) answer in
the negative well-known questions of Sapir. We further show that the unit ball in the Gromov
norm (in dimension 2) of a double of a free group along a collection of subgroups is a finite-sided
rational polyhedron, and that every rational class is virtually represented by an extremal surface
subgroup. These results are obtained by a mixture of combinatorial,
geometric, and linear programming techniques. 
\end{abstract}

\maketitle
\setcounter{tocdepth}{1}
\tableofcontents

\section{Introduction}

\subsection{Gromov's surface subgroup question}

The following well-known question is usually attributed to Gromov:

\begin{question}[Gromov]\label{question:Gromov_question}
Let $G$ be a one-ended hyperbolic group. Does $G$ contain the fundamental group of a closed
surface with $\chi<0$?
\end{question}

Hereafter we abbreviate ``fundamental group of a closed surface with $\chi<0$'' to ``surface group'',
so that this question asks whether every one-ended hyperbolic group contains a surface subgroup.
This question is wide open in general, but a positive answer is known in certain special cases, 
including:
\begin{enumerate}
\item{Coxeter groups (Gordon--Long--Reid \cite{Gordon_Long_Reid});}
\item{Graphs of free groups with cyclic edge groups and $b_2>0$ (Calegari \cite{Calegari_graph});}
\item{Fundamental groups of hyperbolic $3$-manifolds (Kahn--Markovic \cite{Kahn_Markovic});}
\item{Certain doubles of free groups (Gordon--Wilton, Kim--Wilton, Kim--Oum 
\cite{Gordon_Wilton, Kim_Wilton, Kim_Oum});}
\end{enumerate} 
(this list is not exhaustive).

The main goal of this paper is to describe how linear programming may be used to settle the
question of the existence of surface subgroups in certain graphs of free groups, either by giving
a powerful computational tool to {\em find} surface subgroups in specific groups, or by reducing
the analysis of this question in infinite families of groups to a finite (tractable) calculation.
There are many reasons why the case of graphs of free groups is critical for Gromov's question, but
we do not go into this here, taking the interest of Gromov's question in this subclass of groups
to be self-evident.

\subsection{Statement of results}

We are able to prove the existence of surface subgroups in the following groups:
\begin{enumerate}
\item{A group $G$ with $b_2>0$ obtained by doubling a free group $F$ along a finite collection
of finitely generated subgroups $F_i$;}
\item{A group $G$ obtained as an HNN extension $F *_\phi$ where $F$ is a free group of fixed rank
and $\phi$ is a {\em random} endomorphism;}
\item{``Sapir's group'' $C=F*_\phi$ for $F=\langle a,b\rangle$ and $\phi:a \to ab,b \to ba$.}
\end{enumerate}
The sense in which this constitutes a significant advance over the results and methods in
\cite{Calegari_graph,Kim_Wilton,Kim_Oum} is that the edge groups 
are free groups of {\em arbitrary rank}, whereas in the cited papers the edge groups
were required to be cyclic.

Bullet (1) above is implied by a stronger result about the Gromov norm on $H_2$ of the double of
$F$ along the $F_i$, which we discuss in \S~\ref{subsection:Gromov}. 
Bullet (3) is reasonably self-explanatory. A precise statement of bullet (2) is:

\begin{random_folded_theorem}
Let $k\ge 2$ be fixed, and let $F$ be a free group of rank $k$.
Let $\phi$ be a random endomorphism of $F$ of length $n$. Then
the probability that $F*_\phi$ contains an essential surface subgroup is at least $1-O(C^{-n^c})$
for some $C>1$ and $c>0$.
\end{random_folded_theorem}

Here a random endomorphism of length $n$ is one that takes the generators to reduced words of
length $n$ chosen independently and randomly with the uniform distribution. We became interested
in surface subgroups of HNN extensions of free groups after discussions with Mark Sapir, who
conjectured that the subgroup $C$ does not contain a surface subgroup, and thought it was unlikely
that many HNN extensions should contain surface subgroups (other than $\Z^2$ subgroups for
endomorphisms fixing a nontrivial conjugacy class). See also \cite{Crisp_Sageev_Sapir} and
\cite{Sapir}. Therefore it seems safe to say that the
Random $f$-folded Surface Theorem is in many ways very unexpected.

\subsection{Gromov norm}\label{subsection:Gromov}

If $X$ is a $K(\pi,1)$, the Gromov norm of a class $\alpha \in H_2(X;\Q)$, denoted $\|\alpha\|$ 
is the infimum of $-2\chi(S)/n$ over all closed oriented surfaces $S$ without sphere components,
and all positive integers $n$, so that there is a map $f:S \to X$ with $f_*[S]=n\alpha$. If $G$
is a group, define the Gromov norm on $H_2(G;\Q)$ by identifying this space with $H_2(X;\Q)$
for $X$ a $K(G,1)$. The function $\|\cdot\|$ extends by continuity to $H_2(X;\R)$, where 
(despite its name) it defines a pseudo-norm in general. 

There is a relative version of Gromov norm for surfaces with boundary, and classes in
$H_2(X,Y)$ for subspaces $Y \subset X$, and when $H_2(X)=0$ this relative Gromov norm is equivalent
(up to a factor of 4) to the {\em stable commutator length} norm, as defined in \cite{Calegari_scl},
Ch.~2 (also see the start of \S~\ref{section:Gromov_norm}). 
There are equivalent definitions for pairs $G,\lbrace G_i\rbrace$
where $G$ is a group and $\lbrace G_i \rbrace$ is a family of conjugacy classes of subgroups of $G$.

In \S~\ref{section:traintrack_rationality} 
and \S~\ref{section:Gromov_norm} we develop tools to compute stable commutator length in
free groups relative to families of finitely generated subgroups, and show 
(Theorem~\ref{theorem:traintrack})
that the unit balls in the norm are finite sided rational polyhedra. By a doubling argument,
we obtain a similar theorem for Gromov norms of groups obtained from free groups by doubling along
a collection of subgroups:

\begin{double_norm_theorem}
Let $F$ be a finitely generated free group, and let $F_i$ be a finite collection of conjugacy
classes of finitely generated subgroups of $F$. Let $G$ be obtained by doubling $F$ along
the $F_i$. Then the unit ball in the Gromov norm on $H_2(G)$ is a finite sided rational
polyhedron, and each rational class is projectively represented by an extremal surface.
\end{double_norm_theorem}

Since extremal surfaces are necessarily $\pi_1$-injective, this shows that a group $G$ as
in the theorem contains a surface subgroup when $H_2(G)$ is nontrivial.

\subsection{Unity of methods}

The Double Norm Theorem and the Random $f$-folded Surface Theorem are logically independent,
and the certificates for $\pi_1$-injectivity of the surface subgroups they promise are 
quite different.
However, the surfaces in either case are constructed combinatorially from pieces obtained by
solving a rational linear programming problem; and the nature of the representation of the
surfaces by vectors, and the tools used to set up the linear programming problems, are very similar. 
Thus there is a deeper unity of methods underlying the two theorems, beyond the similarity that
both promise surface subgroups in certain graphs of free groups.

\subsection{Acknowledgments}
We would like to thank Sang-Hyun Kim, Tim Susse and Henry Wilton for helpful conversations about the
material in this paper. Danny Calegari was supported by NSF grant DMS 1005246, 
and Alden Walker was supported by NSF grant DMS 1203888.

\section{Traintrack Rationality Theorem}\label{section:traintrack_rationality}

\subsection{Graphs and traintracks}

We recall some standard definitions from the theory of graphs, traintracks and immersions,
and their connection to free groups and morphisms between them. See e.g.\/ \cite{Bestvina_Handel}
for background and more details.

We fix a free group $F$ of finite rank and a free generating set for $F$, and realize $F$
as the fundamental group of a rose $R$, identifying the generators of $F$ with the (oriented) 
edges of $R$. If $X$ is a graph, an {\em immersion} $X \to R$ is a locally injective 
simplicial map taking edges to edges. Every nontrivial conjugacy class in $F$ is represented
by an immersed loop in $R$, unique up to reparameterization of the domain (which is an oriented
circle).

\begin{definition}
Let $T$ be a graph. A {\em turn} is an ordered pair of distinct oriented edges incident to a vertex of $T$,
the first element incoming and the second outgoing. If $e_1$ is the incoming edge and $e_2$ the
outgoing edge, we denote the turn $e_1 \to e_2$.
\end{definition}
Thus, a turn is the same thing as the germ at a vertex of an oriented immersed path in $T$. 

\begin{definition}
A {\em traintrack} is a graph $T$ together with a subset of the turns at each vertex which are
called {\em admissible turns}. If $L$ is an oriented 1-manifold, an immersion $L \to T$ is
{\em admissible} if the germ of $L$ is admissible at every vertex of $T$. A {\em traintrack immersion}
is a simplicial map $T \to R$ taking edges to edges, which is locally injective on each
admissible turn.
\end{definition}
Thus if $L \to T$ is admissible, and $T \to R$ is a traintrack immersion, then $L \to R$ is an immersion.

If $X$ is a graph and we fix a simplicial map $X \to R$, we label the oriented edges of $X$ by the
generators of $F$ corresponding to the edges that they map to.
Any oriented 1-manifold mapping $L \to X$ pulls back these labels
so that each component of $L$ is labeled by a cyclic word in $F$. If $T$ is a traintrack
and $T \to R$ is a traintrack immersion and $L \to T$ is admissible, then the labels on the components
of $L$ are cyclically reduced words.

Conversely, suppose we are given a finite set $\Gamma$ of nontrivial conjugacy classes in $F$. We
let $L$ be an oriented simplicial 1-manifold with one component for each element of $\Gamma$, and each 
component labeled by the cyclically reduced word representing the given conjugacy class. There is
a unique immersion $L \to R$ compatible with the labels. We say that $L$ is {\em carried} 
by a traintrack immersion $T \to R$ if $L \to R$ factors through an admissible map $L \to T$. See
Figure~\ref{figure1}.

\begin{figure}[ht]
\labellist
\small\hair 2pt
\pinlabel {$b$} at 4 45
\pinlabel {$c$} at 1 9
\pinlabel {$a$} at 33 2
\pinlabel {$b$} at 40 42

\pinlabel {$A$} at 61 46
\pinlabel {$A$} at 51 18
\pinlabel {$c$} at 67 0
\pinlabel {$c$} at 92 2
\pinlabel {$a$} at 103 25
\pinlabel {$b$} at 91 47

\pinlabel {$b$} at 145 37
\pinlabel {$A$} at 174 26
\pinlabel {$b$} at 214 0
\pinlabel {$a$} at 217 47
\pinlabel {$c$} at 262 26
\pinlabel {$c$} at 293 35

\pinlabel {$a$} at 340 5
\pinlabel {$b$} at 410 11
\pinlabel {$c$} at 372 58
\endlabellist
\centering
\includegraphics[scale=0.85]{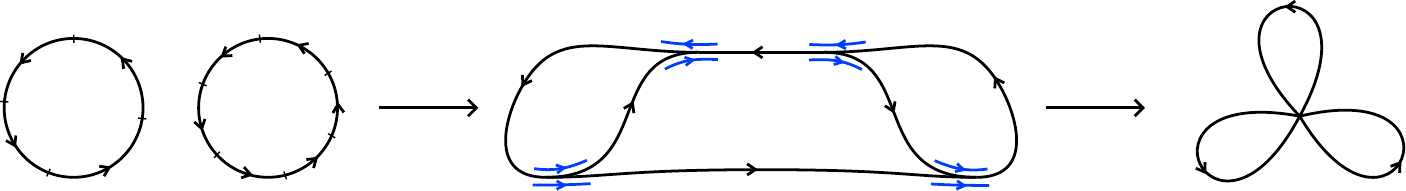}
\caption{The pair of loops $L$ maps to the rose $R$, and this 
map factors through an admissible map to the traintrack $T$, so $L$ is 
carried by $T$.}
\label{figure1}
\end{figure}

\begin{definition}
If $T$ is a traintrack, a {\em weight} $w$ is an assignment of real numbers to the admissible turns
in such a way that for each oriented edge $e$, the sum of numbers associated to turns involving $e$
at one vertex is equal to the sum at the other.
\end{definition}

The space of weights on $T$, denoted $W(T)$, is a real vector space defined over $\Q$. Weights can be non-negative,
integral, and so on. The space of non-negative weights is a convex rational cone $W^+(T)$.

A carrying map $L \to T$ determines a function from admissible turns to non-negative integers, where
the number assigned to a turn is the number of times that $L$ makes such a turn when it passes through
the given vertex. We denote this function $w(L)$.

\begin{lemma}\label{weight_is_degree}
The set of functions $w(L)$ over all carrying maps $L \to T$ is precisely the set of integer
weights in $W^+(T)$.
\end{lemma}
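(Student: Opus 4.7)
The plan is to prove both inclusions separately. The forward direction—that $w(L)$ is a non-negative integer weight for any carrying map $L \to T$—is essentially definitional: non-negativity and integrality are immediate from the fact that $w(L)$ is a count, and the balance condition at an oriented edge $e$ holds because each traversal of $e$ by $L$ contributes to exactly one turn involving $e$ at each endpoint of $e$, so both sums equal the total number of times $L$ traverses $e$.

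For the converse, given an integer weight $w \in W^+(T)$, I would build $L$ combinatorially. For each oriented edge $e$, let $n(e)$ denote the common value
\[
n(e) = \sum_{e_2} w(e \to e_2) = \sum_{e_1} w(e_1 \to e)
\]
guaranteed by the balance condition. Take $n(e)$ disjoint oriented copies of $e$ for every oriented edge. At each vertex $v$ of $T$ and each admissible turn $e_1 \to e_2$ at $v$, glue $w(e_1 \to e_2)$ of the terminal endpoints of $e_1$-copies to $w(e_1 \to e_2)$ of the initial endpoints of $e_2$-copies via any bijection.

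The balance condition is precisely what guarantees that every endpoint at each vertex is matched exactly once: the number of terminal endpoints of $e_1$-copies available at $v$ is $n(e_1) = \sum_{e_2} w(e_1 \to e_2)$, matching the total demand from the turns beginning with $e_1$, and symmetrically for initial endpoints of $e_2$-copies. The resulting space $L$ is therefore a closed oriented 1-manifold, and the canonical map $L \to T$ is by construction an admissible immersion with $w(L) = w$.

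There is no deep obstacle here; the lemma is essentially a bookkeeping identity packaged as a bijection. The only care required is to confirm that the orientation-preserving matching really yields a genuine 1-manifold rather than a space with boundary or pathological identifications, which is automatic because each matched endpoint has a unique partner and the two arcs meeting at an identification point come from distinct copies with compatible orientations.
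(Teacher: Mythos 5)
Your proposal is correct and follows essentially the same strategy as the paper: the balance condition is exactly what allows the pieces to be glued into a closed $1$-manifold carried by $T$ with the prescribed weight. The only cosmetic difference is that the paper assembles $L$ from half-edge intervals, one per admissible turn, while you take $n(e)$ full copies of each edge and match endpoints at the vertices; these are the same construction viewed from dual perspectives.
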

\begin{proof}
Each edge of $L$ contributes $1/2$ to the value of $w(L)$ on the turns at its vertices, so $w(L) \in W^+(T)$.

Conversely, let $w$ be a non-negative integer weight in $W^+(T)$. For each turn $e \to e'$ with weight
$n$, take $n$ disjoint intervals made by gluing the front half of $e$ to the back half of $e'$, and glue
these oriented intervals together (over all turns) compatibly with how they immerse in $T$ to produce $L$.
The defining property of a weight says that this gluing can be done, and $w(L)=w$.
\end{proof}

Note that $w(L)$ does not determine the topology of $L$ (i.e.\/ the number of components).
But it does determine the image of $L$ in $H_1(F)$ under $L \to R$. Thus we obtain
a homomorphism $h:W(T) \to H_1(F)$, defined over $\Q$, so that the image of $[L]$ in $H_1(R)=H_1(F)$ 
is $h(w(L))$.

\subsection{Fatgraphs and scl}

For an introduction to fatgraphs, see \cite{Penner}.

\begin{definition}
A {\em fatgraph} is a graph $X$ together with a choice of cyclic ordering of the edges incident to each
vertex. A fatgraph admits a canonical {\em fattening} to a compact oriented surface $S(X)$ in such a way
that $X$ sits inside $S(X)$ as a spine to which $S(X)$ deformation retracts. The boundary $\partial S(X)$
is an oriented 1-manifold, which comes with a canonical map $\partial S(X) \to X$ which is the
restriction of the deformation retraction, and is an immersion unless $X$ has 1-valent vertices.

A {\em fatgraph over $F$} is a fatgraph $X$ together with a simplicial map of the underlying graph $X \to R$.
It is {\em reduced} if the composition $\partial S(X) \to X \to R$ is an immersion. See Figure~\ref{figure2}.
\end{definition}

\begin{figure}[ht]
\labellist
\small\hair 2pt
\pinlabel {$a$} at 13 58
\pinlabel {$A$} at 23 41
\pinlabel {$c$} at 31 13
\pinlabel {$C$} at 36 -4
\pinlabel {$b$} at 72 -4
\pinlabel {$B$} at 78 16
\pinlabel {$A$} at 84 25
\pinlabel {$a$} at 84 43
\pinlabel {$b$} at 121 24
\pinlabel {$B$} at 122 44
\pinlabel {$c$} at 160 5
\pinlabel {$C$} at 161 23
\pinlabel {$b$} at 198 23
\pinlabel {$B$} at 199 44
\pinlabel {$a$} at 161 63
\pinlabel {$A$} at 160 44
\pinlabel {$a$} at 278 11
\pinlabel {$b$} at 340 9
\pinlabel {$c$} at 308 67
\endlabellist
\centering
\includegraphics[scale=1.0]{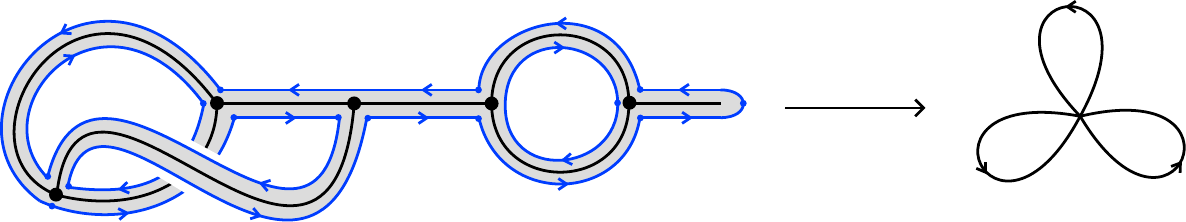}
\caption{This fatgraph map $X \to R$ is an immersion, but the fatgraph is 
\emph{not} reduced because the boundary is not reduced.}
\label{figure2}
\end{figure}

If $X \to R$ is a fatgraph over $F$ without 1-valent vertices, 
and if the underlying map of graphs $X \to R$ is an immersion,
the fatgraph is reduced. The converse is true if $X$ is 3-valent, 
but not in general otherwise. All the fatgraphs we consider in this paper will be immersed.
Moreover, throughout \S~\ref{section:traintrack_rationality} they will also be reduced. 
However we need to consider unreduced fatgraphs in \S~\ref{subsection:bounded_folding}.

Now, let $f:L \to R$ be an oriented 1-manifold mapping to $R$ by an immersion; equivalently, $L$ and $f$
are determined by the data of a collection $\Gamma$ of nontrivial conjugacy classes in $F$. 

\begin{definition}
An {\em admissible surface} for $f:L \to R$ is a compact oriented surface $S$ together with a map
$g:S \to R$ and an oriented covering map $h:\partial S \to L$ so that $f\circ h = g|\partial S$.
\end{definition}
We denote the degree of the covering map $h:\partial S \to L$ by $n(S)$. We say that ad admissible
surface $S$ is {\em efficient} if no component of $S$ is a sphere, and if every component of $S$
is geometrically incompressible; i.e.\/ if there is no essential {\em embedded} loop in $S$ mapping
to a null-homotopic loop in $R$. Any admissible surface can be replaced by an efficient one, by
throwing away sphere components and repeatedly performing compressions. Note that since by hypothesis
every component of $L$ maps to a nontrivial immersed loop in $R$, no component of $S$ is a disk, and
therefore every component of $S$ has non-positive Euler characteristic.

The following proposition is essentially due to Culler \cite{Culler} (see also \cite{Calegari_scl} \S~4.1)
and lets us reduce the study of admissible surfaces to combinatorics:

\begin{proposition}\label{proposition:surface_is_fatgraph}
Every efficient admissible surface for every oriented $f:L \to R$
is homotopic to a surface obtained by fattening a reduced fatgraph over $F$.
\end{proposition}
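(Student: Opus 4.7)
The plan is to carry out the classical argument due to Culler: decompose $S$ along the preimages of midpoints of edges of $R$, use efficiency to force the complementary regions to be disks, and then take the dual graph with its induced cyclic orderings as the required fatgraph. First I homotope $g: S \to R$ and subdivide so that $g$ is simplicial and transverse to the midpoint $m_e$ of each edge $e$ of $R$. Let $\Gamma := g^{-1}\bigl(\bigcup_e \{m_e\}\bigr)$, a properly embedded $1$-submanifold of $S$. I dispose of circle components one at a time: an innermost such circle bounds a disk in $S$ by geometric incompressibility, and on this disk $g$ can be homotoped rel boundary to push the image off $m_e$. After this cleanup $\Gamma$ consists of arcs with endpoints on $\partial S$, and the closures of the components of $S \setminus \Gamma$, which I denote $D_i$, each map into $R \setminus \bigcup_e \{m_e\}$, a disjoint union of contractible half-edge neighborhoods of the unique vertex $v$ of $R$.

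Next I argue each $D_i$ is a disk. Since $D_i$ maps into a contractible set, if $D_i$ had positive genus or more than one boundary component it would contain an essential embedded loop bounding a compressing disk in $S$ mapping to a nullhomotopic loop in $R$, contradicting geometric incompressibility. A component of $S$ disjoint from $\Gamma$ would map entirely to $v$; being closed it would be a sphere by the same argument, contradicting the no-sphere-component clause of efficiency. Now dualize: place a vertex $v_i$ inside each disk $D_i$, and for each arc $\alpha$ of $\Gamma$ add an edge joining the vertices of the two adjacent disks and crossing $\alpha$ once. The cyclic order of these edges at $v_i$ is inherited from the boundary orientation of $D_i$, so the resulting graph $X \subset S$ is a fatgraph and $S$ deformation retracts onto $X$, identifying $S$ with the fattening $S(X)$. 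Labelling each edge of $X$ by the oriented edge of $R$ whose midpoint lies on its arc defines a simplicial map $X \to R$.

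Reducedness is then automatic: under the identification $\partial S \cong \partial S(X)$, the boundary map $\partial S(X) \to X \to R$ is homotopic to $g|_{\partial S} = f \circ h$, and this is an immersion because $f: L \to R$ is an immersion and $h: \partial S \to L$ is a covering map. The main obstacle in the argument is the incompressibility step: forcing every complementary region to be a disk, including the auxiliary tasks of eliminating closed components of $\Gamma$ and ruling out components of $S$ disjoint from $\Gamma$, is where the efficiency hypothesis does all the work. Once that is in place the dualization is formal and reducedness follows without further effort.
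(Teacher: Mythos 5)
The paper itself gives no proof of this proposition: it is attributed to Culler, with \cite{Culler} and \cite{Calegari_scl} \S~4.1 cited, and your argument is exactly that standard one --- transversality to the edge midpoints, elimination of circle components of the preimage, forcing the complementary regions to be disks via efficiency, and then dualizing to get the fatgraph. The overall structure is correct and is the intended argument.

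Two steps are stated too loosely, though both are easily repaired. First, when you rule out non-disk complementary regions $D_i$, the phrase ``essential embedded loop bounding a compressing disk in $S$'' is self-contradictory; what you need is an embedded loop in $\inte(D_i)$ that does not bound a disk in $D_i$, together with the observation that such a loop is then essential in $S$: any disk it bounded in $S$ would be disjoint from $\partial S$ and from $\Gamma$ (the arcs of $\Gamma$ are proper and all circle components have been removed), hence would lie inside $D_i$, a contradiction. This point deserves a sentence, since a priori a loop essential in a subsurface can bound a disk in the ambient surface. Second, reducedness cannot be deduced from the boundary map $\partial S(X) \to X \to R$ being \emph{homotopic} to the immersion $f\circ h$, since being an immersion is not preserved under homotopy; the correct justification is that, by construction, the cyclic edge-word read off by $\partial S(X) \to X \to R$ is letter-for-letter the sequence of midpoint crossings of $g|_{\partial S}$, i.e.\ the cyclically reduced word labeling the corresponding component of $L$, so no turn of $\partial S(X)$ backtracks in $R$ and the composition is locally injective as required.
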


\begin{definition}
Let $\Gamma$ be a finite collection of conjugacy classes in $F$ whose sum is homologically trivial
(i.e.\/ represents $0$ in $H_1(F)$). The {\em stable commutator length of $\Gamma$}, denoted $\scl(\Gamma)$,
is defined to be the infimum
$$\scl(\Gamma) = \inf_S -\chi(S)/2n(S)$$
over all efficient admissible surfaces $S$ for $L$, where $f:L \to R$ represents $\Gamma$. A surface is
{\em extremal} for $\Gamma$ if equality is achieved.
\end{definition}

The main theorem of \cite{Calegari_rational} says that extremal surfaces exist for any $\Gamma$. For
more background and an introduction to the theory of stable commutator length, see \cite{Calegari_scl}
or \cite{Bavard}.

\subsection{Polygons}

Let $X$ be a reduced fatgraph over $R$ with fattening $S(X)$ and oriented boundary $\partial S(X)$. 
There is a decomposition of $S(X)$ into polygons --- canonical up to isotopy --- where all 
vertices of each polygon are vertices on $\partial S(X)$,
with one rectangle for each edge of $X$, and one $n$-gon for each
$n$-valent vertex of $X$. Each $n$-gon with $n\ge 3$ may be further decomposed into $n-2$ triangles,
without introducing new vertices; this decomposition is not canonical unless every vertex of $X$ is
at most 3-valent. Thus, we decompose $S(X)$ into two kinds of polygons: rectangles and triangles.
Note that $\chi(X) = \chi(S(X))=-\tau/2$ where $\tau$ is the number of triangles.
See Figure~\ref{figure3}.

\begin{figure}[ht]
\labellist
\small\hair 2pt
\endlabellist
\centering
\includegraphics[scale=1.0]{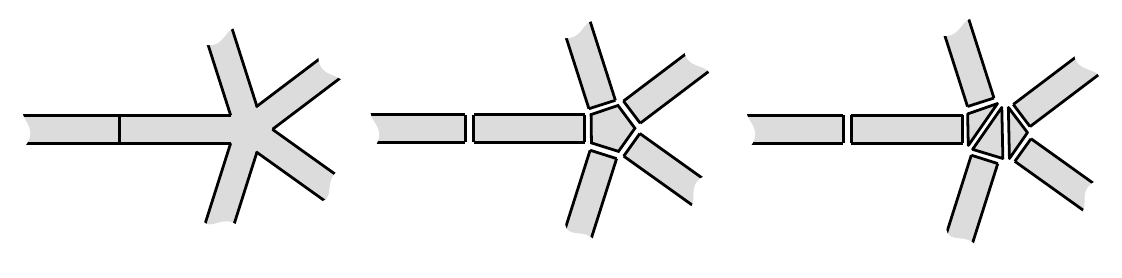}
\caption{A fatgraph $S(X)$ (left) can be cut into rectangles and polygons 
(center), and the polygons can be further cut into triangles (right).}
\label{figure3}
\end{figure}

The edges of the polygons could be {\em boundary edges}, which are edges of $\partial S(X)$, or
{\em internal edges}, which are determined by ordered pairs of vertices of $\partial S(X)$. A polygon is
determined by the cyclic list of its edges; thus, a rectangle has four edges which alternate between
boundary edges and internal edges, while a triangle has three internal edges. Note that
the edge labels on the two boundary edges of a rectangle have inverse labels. Summarizing: a rectangle
piece is determined by the data of a pair of edges of $\partial S(X)$ with inverse labels, while a
triangle is determined by the data of a cyclically ordered list of three vertices of
$\partial S(X)$. In particular, there are finitely many polygon types (at most cubic in the length
of $X$).

Now suppose that $\partial S(X)$ is carried by some immersed traintrack $T \to R$. Each rectangle
determines a pair of edges of $\partial S(X)$ with inverse labels, which are mapped to a pair
of oriented edges of $T$ with inverse labels. At each vertex, $\partial S(X)$ makes some admissible
turn in $T$; we record the information of these admissible turns at the vertices. 
Similarly, each triangle determines a cyclically ordered
list of vertices of $\partial S(X)$ which are mapped to a cyclically ordered list of admissible turns
of $T$. 

\begin{definition}
Let $T \to R$ be an immersed traintrack. A {\em triangle} over $T$ is a cyclically
ordered list of three admissible turns.
A {\em rectangle} over $T$ is a cyclically ordered list of 4 admissible turns of the form
$e_1 \to e_2$, $e_2 \to e_3$, $e_4 \to e_5$, $e_5 \to e_6$ where $e_2$ and $e_5$ 
have inverse labels. See Figure~\ref{figure4}.
\end{definition}

\begin{figure}[ht]
\labellist
\small\hair 2pt
\endlabellist
\centering
\includegraphics[scale=1.0]{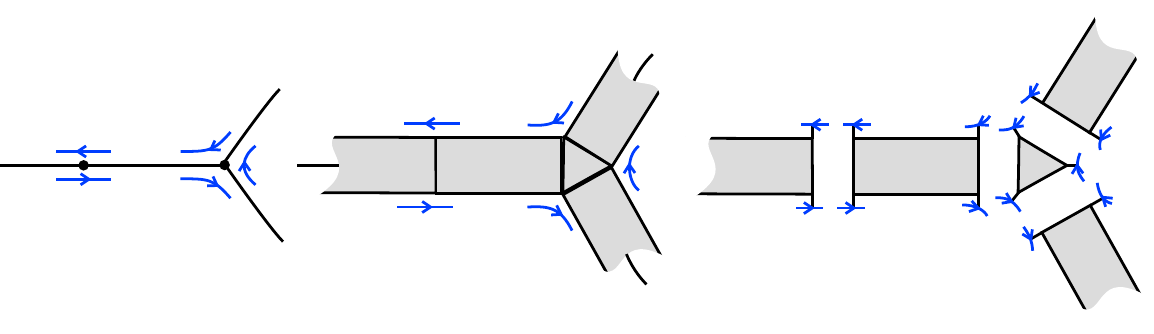}
\caption{If a fatgraph boundary $\partial S(X)$ is carried by an 
immersed traintrack $T \to R$, then each vertex of each rectangle and triangle 
is associated with an admissible turn in $T$ (blue).  As we cut $S(X)$ into 
rectangles and triangles, we record these admissible turns for each piece; 
using this information, we can reassemble the pieces into a fatgraph 
carried by $T$. }
\label{figure4}
\end{figure}

A rectangle over $T$ determines two ordered pairs $(e_2\to e_3,e_4\to e_5)$ and $(e_5\to e_6,e_1\to e_2)$
with notation as above; call these pairs the 
{\em internal edges} of the rectangle, while the edges $e_2$ and $e_5$ are the {\em boundary edges}. 
Similarly, call the three ordered pairs arising
as the boundary of a triangle over $T$ the {\em internal edges} of the triangle.

\begin{definition}
If $T \to R$ is an immersed traintrack, a {\em polygon weight} is an assignment of real numbers to
triangles and rectangles in such a way that for every unordered pair of admissible turns, the
number of times it appears as an internal edge with one ordering is the same as the number of times
it appear as an internal edge with the other ordering.
\end{definition}

The space of polygon weights on $T$, denoted $P(T)$, is a real vector space defined over $\Q$.
The space of non-negative weights is a convex rational cone $P^+(T)$. By the discussion above,
if $X$ is a reduced fatgraph over $R$ with fattening $S(X)$ and oriented boundary $\partial S(X)$
carried by $T$, then after decomposing $S(X)$ into rectangles and triangles, 
we obtain a vector $p(X)$ whose coefficients are the number of each kind of polygon over $T$
(note that $p(X)$ depends not just on $X$ but on the decomposition into triangles, although our
notation obscures this).

\begin{lemma}
Let $X$ be a reduced fatgraph over $R$ with fattening $S(X)$ and oriented boundary $\partial S(X)$
carried by $T$. Then $p(X) \in P^+(T)$.
\end{lemma}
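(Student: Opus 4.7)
The plan is to check the two conditions that define membership in $P^+(T)$: non-negativity of coefficients, and the balance condition on unordered pairs of admissible turns. Non-negativity is immediate, since the coefficients of $p(X)$ are literally the counts of triangle-types and rectangle-types appearing in the decomposition of $S(X)$ into rectangles and triangles.

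The real content is the balance condition, and I would prove it by a double-counting argument on the internal edges of the dissection. First I would observe that every internal edge of the dissection of $S(X)$ (i.e. every edge not lying on $\partial S(X)$) is shared by exactly two polygon pieces, one on each side. Each endpoint of such an internal edge lies on $\partial S(X)$, and since $\partial S(X)$ is carried by $T$, at each such endpoint the boundary passes through a vertex of $X$ making some specified admissible turn in $T$. Thus the internal edge determines an unordered pair of admissible turns $\{\tau,\tau'\}$.

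The key point is that each of the two adjacent polygons records this internal edge as an ordered pair of admissible turns, namely the pair of turns at its two endpoints read off in the direction induced by that polygon's boundary orientation. Because $S(X)$ is an oriented surface, the two polygons adjacent to a shared internal edge induce opposite orientations on it; hence one polygon records $(\tau,\tau')$ while the other records $(\tau',\tau)$. Summing over all internal edges of the dissection therefore pairs up the contributions: for every unordered pair of admissible turns $\{\tau,\tau'\}$, the total number of times it appears as an internal edge of a polygon with the ordering $(\tau,\tau')$ equals the number of times it appears with the ordering $(\tau',\tau)$. This is exactly the defining balance condition for $p(X) \in P(T)$.

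The only thing to verify carefully along the way is that the "internal edges" of a rectangle (written in the definition as the ordered pairs $(e_2\to e_3, e_4\to e_5)$ and $(e_5\to e_6, e_1\to e_2)$) and the three internal edges of a triangle really do correspond, under the cutting of $S(X)$ into polygons, to the ordered pair of admissible turns at the two endpoints of the corresponding internal edge of the dissection, traversed in the boundary orientation. This is a direct unpacking of the definitions of rectangle and triangle over $T$ together with the fact that $\partial S(X) \to T$ is admissible; I do not expect any obstacle here. Once this identification is made, the orientation-reversal argument above gives the balance condition, and non-negativity of the entries completes the proof that $p(X) \in P^+(T)$.
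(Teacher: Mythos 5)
Your proposal is correct and is essentially the paper's argument: the paper's proof is the one-line observation that the polygons of the decomposition of $S(X)$ are glued in pairs along internal edges, and your orientation-reversal double-count is just the careful unpacking of why that pairing forces the two orderings of each unordered pair of admissible turns to occur equally often. Nothing further is needed.
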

\begin{proof}
This is just the observation that the polygons into which $S(X)$ is decomposed are glued together
in pairs along internal edges. 
\end{proof}

\begin{lemma}
There is a rational linear map $\partial:P^+(T) \to W^+(T)$ so that if $X$ is a fatgraph with $\partial S(X)$
carried by $T$, then $\partial p(X) = w(\partial S(X))$.
\end{lemma}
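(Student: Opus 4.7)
The plan is to write down an explicit formula for $\partial$, verify the fatgraph identity $\partial p(X)=w(\partial S(X))$ by a corner-count, and confirm that the image lies in $W^+(T)$ via a realization argument modeled on Lemma~\ref{weight_is_degree}.

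For the formula, let $\mathbf{e}_t$ denote the basis vector corresponding to an admissible turn $t$. I would define $\partial$ by sending a rectangle $\rho$ whose cyclic list of corner turns is $(t_1,t_2,t_3,t_4)$ to $\tfrac12(\mathbf{e}_{t_1}+\mathbf{e}_{t_2}+\mathbf{e}_{t_3}+\mathbf{e}_{t_4})$ and sending each triangle to $0$, then extending linearly. This is a rational linear map, and is manifestly coordinate-wise non-negative on $P^+(T)$. The identity for fatgraphs follows from a corner-count: in the polygon decomposition of $S(X)$, each vertex of $\partial S(X)$ lies between two successive boundary edges belonging to two distinct rectangles, and each of these rectangles has that vertex as a corner with turn $t$. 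So the total number of rectangle-corners labeled $t$ is exactly $2\,w(\partial S(X))_t$, and the factor $\tfrac12$ converts this count into $\partial p(X)_t = w(\partial S(X))_t$.

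The substantive step is showing $\partial(p) \in W^+(T)$. An individual rectangle does not satisfy the weight-balance condition, so the balance must emerge from the polygon-weight condition on $p$. For an integer $p\in P^+(T)$, I would take $p_\pi$ copies of each polygon type $\pi$ and match up internal edges in pairs — possible exactly because of the polygon-weight condition, in the same spirit as the gluing performed in Lemma~\ref{weight_is_degree}. The resulting 2-complex $Y$ may fail to be a surface at its 0-cells, but its boundary edges still assemble into an oriented 1-manifold $\partial Y$ once each non-manifold 0-cell is split along its distinct arcs of boundary edges. The immersions into $T$ of the individual boundary edges combine into a carried immersion $\partial Y\to T$, and repeating the corner-count from the fatgraph case identifies $w(\partial Y)=\partial(p)$. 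By Lemma~\ref{weight_is_degree}, $w(\partial Y)\in W^+(T)$; rational scaling extends this conclusion to all of $P^+(T)$.

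The main obstacle is this realization step — in particular, verifying that the boundary of $Y$ is a genuine oriented 1-manifold carried by $T$ even when $Y$ itself is not a surface. This is ultimately a local combinatorial check at each 0-cell of $Y$, ensuring that the boundary edges there organize into arcs compatibly with the admissible-turn structure, after which the equality $w(\partial Y)=\partial(p)$ follows immediately from matching the definitions.
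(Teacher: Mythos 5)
Your map is exactly the one the paper uses (one half times the sum of the four corner turns of each rectangle, zero on triangles), and your corner count is the content behind the paper's one-line assertion that this map has the desired properties. The realization step you flag as the main obstacle --- gluing polygons of an integral $p$ along internal edges and reading off the boundary --- is precisely the gluing the paper performs in Proposition~\ref{proposition:can_be_reduced}, and the local check does go through: around each glued vertex every link arc runs from a rectangle corner preceded by a boundary edge labeled by the tail of the turn to a rectangle corner followed by a boundary edge labeled by its head, so the boundary is admissibly carried and each turn is traversed exactly half as many times as there are rectangle corners recording it, giving $w(\partial Y)=\partial(p)\in W^+(T)$ and, by rational scaling and continuity, the claim on all of $P^+(T)$.
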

\begin{proof}
The map $\partial$ takes each rectangle to the vector consisting of the 4 admissible turns appearing
as vertices, each with weight $1/2$. Define $\partial$ to be zero on triangles, and
extend by linearity. This map has the desired properties.
\end{proof}
Note that $\partial$ takes integer vectors to integer vectors (though we do not use this fact).

\begin{lemma}
There is a rational linear map $-\chi:P^+(T) \to \R$ so that if $X$ is a fatgraph with $\partial S(X)$
carried by $T$, then $-\chi(p(X)) = -\chi(S(X))$.
\end{lemma}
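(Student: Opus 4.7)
The plan is to define $-\chi$ on the spanning set of triangle and rectangle types over $T$ and extend by rational linearity. Specifically, I would assign the value $1/2$ to each triangle type and $0$ to each rectangle type. This is manifestly a rational linear functional on $P(T)$, and restricts to a map $P^+(T) \to \R$. The content of the lemma is that this specific assignment correctly computes $-\chi(S(X))$.

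To verify compatibility, observe that by construction $p(X)$ records the number of rectangles and triangles of each type in the polygonal decomposition of $S(X)$. Under my proposed definition, $-\chi(p(X))$ therefore equals $\tau/2$, where $\tau$ is the total number of triangles in the decomposition. Since $S(X)$ deformation retracts to $X$, we have $\chi(S(X)) = \chi(X)$, and the paper has already observed that this common quantity is $-\tau/2$. Thus $-\chi(p(X)) = \tau/2 = -\chi(S(X))$ as required.

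The identity $\chi(X) = -\tau/2$ is a direct count: letting $V_n$ denote the number of $n$-valent vertices of $X$ and $E_X$ the number of edges, the unsubdivided polygon decomposition has one rectangle per edge and one $n$-gon per $n$-valent vertex; subdividing each $n$-gon into $n-2$ triangles without introducing vertices yields
\[
\tau \;=\; \sum_n (n-2) V_n \;=\; 2 E_X - 2\sum_n V_n \;=\; -2\chi(X),
\]
where we used the handshake identity $\sum_n n V_n = 2 E_X$. Hence $\tau/2 = -\chi(X) = -\chi(S(X))$, confirming the formula.

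I do not expect a genuine obstacle. The only mild subtlety is that the definition is truly a function of polygon type (triangle vs. rectangle) rather than of any finer data, which is automatic since the assigned values $1/2$ and $0$ depend on nothing beyond this dichotomy. Integrality (each triangle contributing a half-integer) is not needed here, in contrast to the boundary map $\partial$.
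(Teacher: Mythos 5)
Your proposal is correct and is essentially the paper's proof: the paper also defines $-\chi$ to be $1/2$ on each triangle and $0$ on each rectangle, relying on the earlier observation that $\chi(S(X))=\chi(X)=-\tau/2$. Your explicit vertex--edge count verifying that identity is a harmless elaboration of a fact the paper simply asserts.
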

\begin{proof}
Define $-\chi$ to be $1/2$ on every triangle, and $0$ on rectangles.
\end{proof}
Again, $-\chi$ takes integer vectors to integers.

\begin{proposition}\label{proposition:can_be_reduced}
For every non-negative integer weight $p$ in $P^+(T)$ there is some non-negative integer weight
$p'$ with $\partial p = \partial p'$ and $-\chi(p) \ge -\chi(p')$, and such that $p'=p(X)$
for some fatgraph $X \to R$ with $\partial S(X)$ carried by $T$.
\end{proposition}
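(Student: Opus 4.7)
The plan is to realize $p$ as a gluing of polygons and then pass to a fatgraph spine of the resulting surface. By the balance condition on $p$, for each unordered pair of admissible turns the internal-edge slot counts in each orientation agree, so I may fix an arbitrary matching pairing each $(a,b)$-oriented slot with a $(b,a)$-oriented slot. Performing the associated orientation-reversing identifications on the disjoint union of polygon pieces described by $p$ yields an oriented compact $2$-manifold $S$ with the inherited polygonal decomposition. By the definition of $\partial$, we have $w(\partial S) = \partial p$, and $\partial S$ is admissibly carried by $T$.

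A direct count in the decomposition gives
\[
\chi(S) = V - t/2 - 2r,
\]
where $t,r$ are the triangle and rectangle counts of $p$ and $V$ is the number of distinct vertices. Since only rectangles contribute boundary edges, $\partial S$ carries exactly $2r$ vertices, so $V = 2r + V_{\text{int}}$ with $V_{\text{int}} \geq 0$ counting interior vertices. Consequently $-\chi(S) = t/2 - V_{\text{int}} \leq -\chi(p)$, with equality iff there are no interior vertices.

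Next, I discard every closed component of $S$. Such a component contains no rectangles (rectangles always contribute boundary), so removing it subtracts a non-negative multiple of $1/2$ from $-\chi(p)$ and leaves $\partial p$ unchanged. Call the remainder $S''$ with weight $p''$. Every component of $S''$ has non-empty boundary, so $S''$ deformation retracts onto a graph spine $X$; the orientation of $S''$ makes $X$ a fatgraph with $S(X) \simeq S''$, and the edge labels inherited from the rectangles of the decomposition give a map $X \to R$ with $\partial S(X) \to R$ matching $\partial S'' \to R$ (hence carried by $T$). Setting $p' := p(X)$ via the canonical decomposition of $S(X)$, we get $\partial p' = \partial p$ and $-\chi(p') = -\chi(S(X)) = -\chi(S'') \leq -\chi(p'') \leq -\chi(p)$, as required.

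The main technical obstacle lies in the final extraction step: the inherited polygonal decomposition of $S''$ need not be the canonical fatgraph decomposition of $X$, since it may feature interior vertices (triangle clusters meeting no rectangles) or rectangle-to-rectangle gluings (corresponding to two-valent vertices in the naive dual). The cleanest route is to cure these defects by local modifications --- re-triangulating the star of each interior vertex with two fewer triangles, and contracting two-valent vertices of $X$ while merging the corresponding rectangles --- and to verify that every such modification preserves $\partial p$ and never increases $-\chi(p)$. Carrying out the bookkeeping of how admissible turns at polygon corners redistribute under these moves is the main content of the proof.
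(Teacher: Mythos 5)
Your construction follows the paper's strategy: pair the internal-edge slots using the balance condition to glue the polygons into a surface $S$, observe $w(\partial S)=\partial p$, discard the closed (triangle-only) components, and extract a fatgraph spine. The Euler characteristic count $-\chi(S)=t/2-V_{\mathrm{int}}\le -\chi(p)$ is a nice way to make the inequality transparent, and everything up to that point is sound. However, the step you explicitly defer --- upgrading $S''$ to an honest $S(X)$ for a fatgraph $X\to R$ with $p'=p(X)$ --- is precisely where the remaining content of the proposition lies, and your sketch of how to do it has two concrete problems.

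First, re-triangulating the star of each interior vertex does not suffice. A triangle-only complementary region can have nontrivial topology with \emph{all} of its vertices on its boundary (an annulus or higher-genus piece triangulated with boundary vertices only), so there is no interior vertex to remove; yet such a region cannot be the vertex polygon of any fatgraph over $R$, since the portion of a spine lying inside it would contain essential edges mapping to the vertex of $R$, violating the requirement that $X\to R$ be simplicial taking edges to edges. The paper's fix is to compress each triangle-only subsurface with nontrivial topology down to a disk and re-triangulate without introducing new boundary vertices: compression leaves the rectangles (hence $\partial p$) untouched and can only decrease the number of triangles needed, which gives the desired inequality. Second, your proposed cure for rectangle-to-rectangle gluings --- contracting $2$-valent vertices of $X$ and merging the adjacent rectangles --- is both unnecessary and harmful. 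Two rectangles meeting directly along an internal edge is exactly what the canonical decomposition of $S(X)$ looks like at a $2$-valent vertex of $X$ (which must exist in general, since edges of $X$ map to single edges of $R$), so no repair is needed; and a merged piece is no longer a rectangle over $T$, so the resulting vector would leave $P^+(T)$ and the equality $\partial p'=\partial p$ would be lost. Without a correct treatment of the triangle-only regions, the final chain $-\chi(p')=-\chi(S'')\le -\chi(p)$ with $p'=p(X)$ is not yet established.
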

\begin{proof}
An integral weight $p$ determines a collection of triangles and rectangles where the weight
of each piece determines the number of copies. Polygons can be glued together along the same
internal edge with opposite orderings; by the definition of a weight, this can be done to produce
a surface $S$ without corners. The surface $S$ might
contain some components without rectangles (i.e.\/ consisting entirely of triangles);
throw these pieces away. The surface $S$ might also contain some subsurface made entirely of 
triangles with nontrivial topology. Compress these surfaces down to disks, and triangulate
the result without introducing new vertices on the boundary. The result is a new surface which
by construction is of the form $S(X)$ for some fatgraph $X \to R$. The compression did not
affect boundary edges, so $\partial p = \partial p'$. Moreover, compression can only reduce the
number of triangles used, so $-\chi(p) \ge -\chi(p')$. This completes the proof.
\end{proof}

\subsection{Traintrack Rationality Theorem}

For $w \in W^+(T)$ rational and in the kernel of $h:W^+(T) \to H_1(F)$, 
we can define $\scl(w)$ to be the infimum of $\scl(\Gamma)/n$ for all
homologically $\Gamma$ represented by an oriented 1-manifold $L$ carried by $T$ with $w(L)=nw$ for some $n$. 
The following Traintrack Rationality Theorem is the main theorem of this section.

\begin{theorem}[Traintrack Rationality Theorem]\label{theorem:traintrack}
Let $T$ be a traintrack immersing to $R$, and let $B^+(T)$ denote the kernel of $h:W^+(T) \to H_1(F)$.
The function $\scl$ extends continuously to $B^+(T)$ in a unique way, 
where it is convex and piecewise rational linear. For any rational $w\in B^+(T)$ there is some 
homologically trivial $\Gamma$ and a fatgraph $X$ over $F$ with $\partial S(X)$ representing $\Gamma$,
in such a way that $\partial S(X)$ is carried by $T$ with $w(\partial S(X))=nw$ and
$\scl(w)=-\chi(S(X))/2n$. 
\end{theorem}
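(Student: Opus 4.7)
The plan is to convert $\scl(w)$ into the value of a rational linear program on the cone $P^+(T)$ and invoke parametric LP theory.

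Define, for $w \in B^+(T)$,
$$\mathrm{LP}(w) := \inf\bigl\{-\chi(q)/2 \;:\; q \in P^+(T),\ \partial q = w\bigr\}.$$
I would first show $\scl(w) = \mathrm{LP}(w)$ for rational $w$. The direction $\scl(w) \ge \mathrm{LP}(w)$ follows from Proposition~\ref{proposition:surface_is_fatgraph}: any efficient admissible surface $S$ for $\Gamma$ with $L$ carried by $T$ and $w(L) = nw$ is homotopic to $S(X)$ for a reduced fatgraph $X$ whose boundary is still carried by $T$, and then $q := p(X)/(n(S)n)$ lies in $P^+(T)$, satisfies $\partial q = w$, and has $-\chi(q)/2 = -\chi(S)/(2n(S)n)$. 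The reverse direction uses Proposition~\ref{proposition:can_be_reduced}: any rational feasible $q$ scales to an integer $p = kq$ which can be replaced by $p' = p(X)$ for a reduced fatgraph $X$ with $\partial p' = kw$ and $-\chi(p') \le -\chi(p)$. Then $L := \partial S(X)$ satisfies $w(L) = kw$, and $S(X)$ is an admissible surface for the homologically trivial $\Gamma$ it represents, certifying $\scl(w) \le -\chi(S(X))/(2k) \le -\chi(q)/2$.

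The second step is parametric rational linear programming. The feasible polyhedron $\{q \in P^+(T) : \partial q = w\}$ and the objective $-\chi/2$ are both rational linear, so standard theory gives that $\mathrm{LP}$ is convex and piecewise rational linear on the feasibility cone, with a rational vertex optimum at every rational $w$. Feasibility for all rational $w \in B^+(T)$ follows from the existence of at least one admissible surface (guaranteed by \cite{Calegari_rational} together with Proposition~\ref{proposition:surface_is_fatgraph}); by conicity the feasibility cone is the whole of $B^+(T)$. At each rational $w$, scaling the LP optimum to an integer and applying Proposition~\ref{proposition:can_be_reduced} produces the extremal reduced fatgraph $X$ with $\partial S(X)$ carried by $T$, $w(\partial S(X)) = nw$, and $\scl(w) = -\chi(S(X))/2n$.

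Finally, the extension of $\mathrm{LP}$ to all of $B^+(T)$ is automatic for a convex piecewise rational linear function on a rational polyhedral cone, and uniqueness follows from density of rationals. The main obstacle I anticipate is the bookkeeping in the identity $\scl(w) = \mathrm{LP}(w)$: tracking the covering degree $n(S)$, the weight scale $n$ coming from $w(L) = nw$, and the factor $1/2$ built into $\partial$, and in particular verifying that the fatgraph extracted from Proposition~\ref{proposition:surface_is_fatgraph} really has its boundary carried by the original traintrack $T$ (so that $p(X) \in P^+(T)$ with the expected $\partial p(X)$). Once this weight accounting is transparent, the theorem is a direct consequence of parametric rational LP applied to the pair $(-\chi/2,\partial)$ on the cone $P^+(T)$.
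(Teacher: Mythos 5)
Your proposal is correct and is essentially the paper's own argument: the paper likewise defines the extension as the value function $\inf_{q\in P^+(T)\cap\partial^{-1}(w)}-\chi(q)/2$, identifies it with $\scl(w)$ at rational points using Proposition~\ref{proposition:surface_is_fatgraph} in one direction and Proposition~\ref{proposition:can_be_reduced} (after scaling a rational optimum to an integer point) in the other, and reads off convexity, piecewise rational linearity, and the extremal fatgraph surface from the rational polyhedral structure. Your only additions — making feasibility of $Q(w)$ explicit via \cite{Calegari_rational} and spelling out the weight/degree bookkeeping — are refinements of steps the paper treats as evident, not a different route.
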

In particular, the surface $S(X)$ is extremal for $\partial S(X)$.
\begin{proof}
Define $Q(w)=P^+(w) \cap \partial^{-1}(w)$; this is a convex linear polyhedron, and is rational
if $w$ is rational. Define
$$\scl(w)=\inf_{q \in Q(w)} -\chi(q)/2$$
This is evidently convex and piecewise rational linear on $B^+(T)$. We show that it agrees with
the definition of $\scl(w)$ already given when $w$ is rational, and that there is an extremal
surface obtained from some fatgraph.

The infimum of $-\chi$ on $Q(w)$ is achieved on some nonempty subpolyhedron $E(w)$, which is convex in general,
and rational if $w$ is rational. A nonempty rational polyhedron contains a rational point, and
every rational $p \in E(w)$ can be rescaled to an integer point $np$, which is in $E(nw)$ by linearity
of the maps and $-\chi$; and by Proposition~\ref{proposition:can_be_reduced}, there is some fatgraph $X$ with
$\partial S(X)$ carried by $T$ and with $w(\partial S(X))=nw$ and $-\chi(S(X))=-\chi(np)$.

Conversely, any efficient admissible surface $S$ with $\partial S$ carried by $T$ and 
with $w(\partial S)=mw$ for some $m$ can be obtained as $S=S(X)$ for some reduced 
fatgraph $X$ over $R$ by Proposition~\ref{proposition:surface_is_fatgraph}. Then
any $p(X)$ satisfies $\partial p(X) = mw$, so $p(X) \in Q(mw)$. But then 
$$-\chi(S(X))/2m = -\chi(p(X))/2m \ge -\chi(E(w))/2$$ 
Thus $\scl(w)=-\chi(E(w))/2$, and the surface constructed from $p$ above was
extremal, as claimed.
\end{proof}

\begin{example}[Verbal traintracks]
Fix a free group $F$ of rank $k$ and a free generating set, and fix a positive integer $\ell$.
Define a traintrack $T_\ell$ whose oriented edges are the set of reduced words in $F$ of length
$\ell-1$ and whose admissible turns are reduced words of length $\ell$, which we think of as
an ordered pair of oriented edges consisting of the prefix and suffix of the given word of length
$\ell-1$.

Let $W_\ell$ denote the weight space, and $W_\ell^+$ the non-negative weights as above. There is
an involution $\epsilon$ on $W_\ell$, which takes $\sigma$ to $-\sigma^{-1}$, where $\sigma^{-1}$
denotes the inverse word to a reduced word $\sigma$. The natural inclusion $W_\ell^+ \to W_\ell$
induces a {\em surjection} $W_\ell^+ \to W_\ell/\epsilon$, and we obtain a rational linear
(pseudo)-norm on $W_\ell/\epsilon$, where the norm $\|[w]\|$ of an equivalence class $[w]$
is the infimum of the $\scl(w)$ over all $w \in W_\ell^+$ mapping to $w$. The linear functions
on $W_\ell/\epsilon$ are precisely real linear combinations of the {\em homogeneous} (big)
{\em counting quasimorphisms of length at most $\ell$} first introduced by Rhemtulla \cite{Rhemtulla}
and studied later by Brooks \cite{Brooks}, Grigorchuk \cite{Grigorchuk} and others. Thus we may
use $W_\ell/\epsilon$ to get an explicit and complete set of linear relations between the 
homogeneous counting quasimorphisms supported on words of any bounded length. 
For more details, see \cite{Calegari_Walker_sslpv1}, especially \S~4--5.
\end{example}

\section{Gromov Norm of doubles}\label{section:Gromov_norm}

We briefly introduce the Gromov norm on the homology of a space or group, and its relative variants.

\begin{definition}
Let $X$ be a topological space. The Gromov (pseudo)-norm (also called the $L_1$ norm)
of a homology class $\alpha \in H_i(X;\R)$, denoted $\|\alpha\|$,
is the infimum of $\sum |t_i|$ over all real singular $i$-cycles $\sum t_i\sigma_i$ representing 
$\alpha$. Similarly define a norm on relative classes $\alpha \in H_i(X,Y;\R)$ for a subspace
$Y \subset X$ from relative $i$-cycles.
\end{definition}

If $G$ is a group, we can define the Gromov norm on $H_*(G)$ by identifying the group
homology with $H_*(K(G,1))$. 

\begin{definition}
If $G_i$ is a family of conjugacy classes of subgroups of $G$, 
we can build a space $K$ as the mapping cylinder of $\coprod_i K(G_i,1) \to K(G,1)$, and we define
the Gromov norm on $H_*(G,\lbrace G_i\rbrace)$ by identifying group homology with
$H_*(K,\coprod_i K(G_i,1))$.
\end{definition}

In the 2-dimensional case, one has the following geometric interpretation of the Gromov norm:
\begin{proposition}\label{proposition:norm_is_surfaces}
For $\alpha \in H_2(X;\Q)$ there is a formula
$$\|\alpha\|=\inf_S -2\chi(S)/n(S)$$
where the infimum is taken over closed oriented surfaces $S$ without sphere components
for which there are maps $f:S \to X$ with $f_*[S]=n\alpha$ for some $\alpha$.

Similarly, for $\alpha \in H_2(X,Y;\Q)$ the same formula is true, where now the infimum is
taken over compact oriented surfaces $S$ without sphere or disk components for which
there are maps $f:(S,\partial S) \to (X,Y)$ with $f_*[S]=n\alpha$ for some $\alpha$.
\end{proposition}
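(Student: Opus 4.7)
I would prove the proposition by establishing the two inequalities separately in the absolute case; the relative case follows by the identical argument applied to relative cycles with boundary in $Y$ and surfaces with boundary mapped to $Y$.

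For the inequality $\|\alpha\|\le -2\chi(S)/n(S)$ for every admissible $S$: functoriality of the simplicial norm gives $n(S)\|\alpha\|=\|f_*[S]\|\le\|[S]\|$, so it suffices to show $\|[S]\|\le-2\chi(S)$. Since $S$ has no sphere components, this reduces to the classical inequality $\|[\Sigma_g]\|\le-2\chi(\Sigma_g)$ for $g\ge 1$: both sides vanish when $g=1$, and for $g\ge 2$ it follows from Gromov's straightening argument, using that geodesic simplices in $\mathbb H^2$ have area at most $\pi$ while $\Sigma_g$ has hyperbolic area $2\pi|\chi|$.

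For the reverse inequality $\inf_S -2\chi(S)/n(S)\le\|\alpha\|$: given $\epsilon>0$, choose a real $2$-cycle $c=\sum t_i\sigma_i$ representing $\alpha$ with $\sum|t_i|<\|\alpha\|+\epsilon$. Since rational cycles are dense in the affine subspace of real cycles representing $\alpha$ and $\|\cdot\|_1$ is continuous, we may assume $c$ is rational, and clearing denominators yields an integer cycle $\sum n_i\sigma_i$ representing $q\alpha$ for some $q\in\Z_{>0}$, with $n_i>0$ after reversing orientations where necessary. Now assemble $n_i$ copies of the standard $2$-simplex for each $\sigma_i$: the identity $\partial c=0$ partitions the boundary $1$-simplex copies into canceling pairs, and gluing along these pairs produces an oriented $2$-complex which is locally Euclidean away from the $0$-skeleton. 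Pairing ensures each edge is shared by exactly two triangles, so the link of each vertex is a $2$-regular graph, hence a disjoint union of circles; separating each vertex into one copy per link-component yields a closed oriented surface $S$ with an induced map $f:S\to X$ and $f_*[S]=q\alpha$. Counting $F=\sum n_i$ faces, $E=3F/2$ edges, and $V\ge 1$ vertices gives $-2\chi(S)=F-2V\le\sum n_i<q(\|\alpha\|+\epsilon)$, hence $-2\chi(S)/n(S)<\|\alpha\|+\epsilon$; letting $\epsilon\to 0$ closes the argument.

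The remaining subtlety is eliminating sphere components of the constructed $S$, since the statement excludes them. If a sphere component $\Sigma_0\subset S$ satisfies $f_*[\Sigma_0]=0$, it can be discarded, with the change in $-2\chi(S)/n(S)$ absorbed within the slack $\epsilon$; if $f_*[\Sigma_0]\ne 0$, then $f_*[\Sigma_0]$ is a nonzero spherical class whose Gromov norm is automatically zero, so one subtracts the corresponding sub-cycle from $c$ and replaces it by a cycle of arbitrarily small $\ell^1$-norm representing the same class before reassembling. The only other nontrivial input is $\|[\Sigma_g]\|\le-2\chi(\Sigma_g)$, which is Gromov's classical computation; the rest is bookkeeping.
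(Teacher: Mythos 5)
Your architecture is the standard one (the paper itself gives no proof of this proposition, deferring to \cite{Gromov_bounded} and \cite{Calegari_scl}, and the argument there is exactly of this shape: functoriality plus the norm of a surface class in one direction, rational approximation and gluing simplices into a surface in the other). However, two steps are wrong as written. First, in the easy direction you need the \emph{upper} bound $\|[\Sigma_g]\|\le -2\chi(\Sigma_g)$, but the mechanism you invoke --- straightening together with the bound $\pi$ on the area of geodesic simplices in $\mathbb{H}^2$ --- proves the \emph{lower} bound: it shows $\sum|t_i|\ge \mathrm{area}(\Sigma_g)/\pi = -2\chi(\Sigma_g)$ for every fundamental cycle. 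The upper bound requires exhibiting efficient fundamental cycles, e.g.\ triangulating a degree-$d$ self-cover of genus $d(g-1)+1$ by $4\bigl(d(g-1)+1\bigr)-2$ simplices and pushing down (giving $4(g-1)+2/d\to 4g-4$), or Gromov's smearing. Since the equality $\|[\Sigma_g]\|=4g-4$ is classical you may simply quote it, but the proof you sketch is of the opposite inequality.

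Second, the sphere-elimination step fails as stated. Discarding a sphere component raises $-2\chi(S)$ by $4$, and you have no control on the number of sphere components produced by the gluing (it can be comparable to $F=\sum n_i$), so the change is not ``absorbed within the slack $\epsilon$.'' The repair is easy but different: the inequality $-2\chi = F-2V\le F$ holds \emph{component by component}, so the union $S'$ of the non-sphere components already satisfies $-2\chi(S')\le F\le q(\|\alpha\|+\epsilon)$; alternatively, replace each sphere component by a torus mapped through it via a degree-one map $T^2\to S^2$, which preserves $f_*[S]$ for arbitrary $X$ and contributes $0$ to $-2\chi$. The latter device also fixes your treatment of sphere components with $f_*[\Sigma_0]\ne 0$, which as written is circular: after subtracting the sub-cycle and re-assembling, nothing prevents the new surface from again having sphere components in that class, and you give no termination argument. (In the paper's applications $X$ is aspherical, so that case is vacuous, but the proposition is stated for general $X$.) Finally, the relative case is not quite ``identical'': you must arrange that the unpaired triangle faces are exactly those mapping into $Y$, so that they assemble into $\partial S$ with $\partial S\to Y$; disk components must be removed by the same device as spheres; and the first direction needs $\|[S,\partial S]\|\le -2\chi(S)$ for compact surfaces with boundary. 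Each of these needs at least a sentence.
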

For more details, see \cite{Gromov_bounded}; for the connection to $\scl$ in the 2-dimensional case,
see \cite{Calegari_scl}.

The following application makes no mention of traintracks in the statement, and is our main
motivation for pursuing this line of reasoning.

\begin{theorem}[Relative Gromov Norm]\label{theorem:relative_Gromov_norm}
Let $F$ be a finitely generated free group, and let $F_i$ be a finite collection of conjugacy classes of finitely 
generated subgroups of $F$. Let $H:=H_2(F,\lbrace F_i\rbrace)$ denote relative 2-dimensional
homology. Then the unit ball in the Gromov norm on $H$ is a finite sided rational polyhedron,
and each rational class is projectively represented by an extremal surface with boundary.
\end{theorem}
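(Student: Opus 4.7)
The plan is to reduce to the Traintrack Rationality Theorem by choosing one traintrack that carries every possible boundary of an admissible surface. Because $F$ is free, $H_2(F)=0$, so the long exact sequence identifies $H:=H_2(F,\{F_i\})$ with the kernel of $\bigoplus_i H_1(F_i)\to H_1(F)$; the same vanishing lets me identify the relative Gromov norm on $H$ with four times the stable commutator length of the corresponding homologically trivial boundary chain in $F$. Thus it suffices to control $\scl$ as a function of this boundary class.

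For each $i$ I fix a core graph $X_i \to R$ representing $F_i$, and set $T:=\bigsqcup_i X_i$ with admissible turns inherited from the immersions. Every oriented loop conjugate into some $F_i$ is then carried by $T$. The natural map $W^+(T)\to \bigoplus_i H_1(F_i)$ sending a weight to the homology class of the 1-manifold it represents is rational linear, and its composition with $\bigoplus_i H_1(F_i)\to H_1(F)$ equals the map $h$ of \S\ref{section:traintrack_rationality}, so it restricts to a rational linear map $\pi : B^+(T)\to H$. Surjectivity of $\pi$ holds because any rational class in $H$ can, after scaling, be represented by an integral union of oriented loops in the $X_i$. Using Proposition~\ref{proposition:surface_is_fatgraph} in one direction and Proposition~\ref{proposition:can_be_reduced} in the other, every efficient admissible surface for $\alpha$ arises from a reduced fatgraph whose boundary is carried by $T$ with weight projecting to $\alpha$, and every rational weight in $\pi^{-1}(\alpha)\cap B^+(T)$ is realized by such a fatgraph, yielding
$$\|\alpha\|_G \;=\; 4\inf_{w\in \pi^{-1}(\alpha)}\scl(w).$$

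By Theorem~\ref{theorem:traintrack}, $\scl$ is convex and piecewise rational linear on $B^+(T)$. The infimum of such a function along the fibers of a rational linear surjection of rational polyhedral cones is again convex and piecewise rational linear (standard rational linear programming, e.g.\ by Fourier--Motzkin elimination applied piece by piece). Hence $\|\cdot\|_G$ is piecewise rational linear on $H$, and its unit ball is a finite-sided rational polyhedron. For a rational $\alpha$, rational LP gives a rational minimizer $w_0\in\pi^{-1}(\alpha)$; Theorem~\ref{theorem:traintrack} applied to $w_0$ produces an integer $n$ and a reduced fatgraph $X$ with $\partial S(X)$ carried by $T$ of weight $nw_0$ and $-\chi(S(X))/2n=\scl(w_0)$, so $S(X)$ projectively represents $\alpha$ and is extremal. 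The main obstacle is the identification above of the Gromov infimum with the infimum of $4\scl$ along fibers of $\pi$: one must verify that the passage between admissible surfaces and traintrack weights preserves the precise class in $H$, which is a careful bookkeeping on top of the fatgraph machinery of \S\ref{section:traintrack_rationality}.
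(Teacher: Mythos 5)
Your proposal is correct, and it runs on the same chassis as the paper's argument: the same traintrack $T$ built from core graphs $R_i$ immersed in the rose, the same identification of $H_2(F,\lbrace F_i\rbrace)$ with the kernel of $\oplus_i H_1(F_i)\to H_1(F)$ via the long exact sequence and $H_2(F)=0$, and the same passage between compressed relative surfaces and fatgraphs carried by $T$ (Propositions~\ref{proposition:surface_is_fatgraph} and~\ref{proposition:can_be_reduced}). Where you genuinely differ is the final optimization. You quote Theorem~\ref{theorem:traintrack} as a black box, translate the relative Gromov norm into $4\,\scl$ via Proposition~\ref{proposition:norm_is_surfaces}, and then take the fiberwise infimum of the convex piecewise rational linear function $\scl$ along the rational linear surjection $\pi:B^+(T)\to H$, using polyhedral projection (Fourier--Motzkin) to retain rational polyhedrality. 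The paper collapses these two optimizations into one: it works directly with the polygon cone $P^+(T)$, the composite rational linear map to $H_2(F,\lbrace F_i\rbrace)$, and the rational linear functional $-2\chi$, obtaining $\|\alpha\|=\inf_{p\in h^{-1}(\alpha)}-2\chi(p)$ in a single stroke, never mentioning $\scl$. The two routes agree because the paper's infimum factors as $\inf_{w\in\pi^{-1}(\alpha)}\,\inf_{p\in P^+(T)\cap\partial^{-1}(w)}-2\chi(p)$, and the inner infimum is exactly $4\,\scl(w)$ by the proof of Theorem~\ref{theorem:traintrack}. Your route buys economy (reuse of the earlier theorem) at the cost of the two points you flag: the degree-versus-class bookkeeping in the $\scl$/Gromov-norm identification, and the need to know that $\scl$ on $B^+(T)$ has finitely many rational polyhedral linear pieces so that the infimum along a (possibly unbounded) rational fiber is attained at a rational point. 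Both points do go through --- the second is immediate from the proof, rather than just the statement, of Theorem~\ref{theorem:traintrack}, where $\scl$ is itself a fiberwise minimum of $-\chi/2$ over $P^+(T)$ --- but the paper's direct formulation sidesteps them entirely, which is why it is the cleaner write-up of the same underlying linear-programming argument.
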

\begin{proof}
Let $R$ be a rose for $F$, and for each $i$ let $R_i$ be a graph without 1-valent edges
that immerses in $R$ in such a way that the image of $\pi_1(R_i)$ is conjugate to $F_i$.
Such graphs are obtained by Stallings' method of {\em folding} a set of generators for $F_i$;
see \cite{Stallings}. We let $T$ be the traintrack whose underlying graph is the disjoint union
$\cup_i R_i$, and whose admissible turns are exactly the paths in $R_i$ that do not backtrack.
We can build a space $C$ as the mapping cylinder of the immersions $\cup_i R_i \to R$; thus
$C$ retracts to $R$, and contains $\cup_i R_i$ as a subspace. For each component $T_i$ of $T$
there is a rational linear map $h:W^+(T_i) \to H_1(R_i)$, and all together these give a (surjective) 
rational linear map
$$h:W^+(T) \to \oplus H_1(R_i) = \oplus H_1(F_i)$$
Note that $\partial:H_2(F,\lbrace F_i\rbrace) \to \oplus H_1(F_i)$ is injective, and
has image equal to the kernel of $\oplus H_1(F_i) \to H_1(F)$, by the long exact sequence, 
and $H_2(F)=0$ for a free group $F$.

Any $(S,\partial S) \to (R,\cup_i R_i)$ can be homotoped and compressed until $\partial S \to \cup_i R_i$
is an immersion, which is to say it is carried by $T$. The surface $S$ can be further compressed
until we can write $S=S(X)$ for some fatgraph $X$ over $R$
compatible with $\partial S(X) \to T \to R$. Conversely, any fatgraph $X$ over $R$
with $\partial S(X)$ carried by $T$ represents a class in $H_2(F,\lbrace F_i\rbrace)$.

We can express this in terms of linear algebra as follows. 
If, as before, we denote the kernel of $h:W^+(T) \to H_1(F)$ by $B^+(T)$, and factor $h$ as
$$0 \to B^+(T) \to \oplus H_1(F_i) \to H_1(F)$$
then this sequence is exact; i.e.\/ the first map is injective on $B^+(T)$, and its image 
is exactly equal to the kernel of $\oplus H_1(F_i) \to H_1(F)$. Note that this is an exact sequence
of $\R^+$-modules, since $B^+(T)$ is merely a cone, and not a vector space. On the other hand,
since all the terms and maps are defined over $\Q$, the sequence is still exact when restricted to
the rational points in each term. Since $\partial:P^+(X) \to B^+(X)$ is surjective, and 
$\partial:H_2(F,\lbrace F_i\rbrace) \to \oplus H_1(F_i)$ is injective with image equal to the
kernel of $\oplus H_1(F_i) \to H_1(F)$, we see that we have shown that $h:P^+(T) \to H_2(F,\lbrace F_i\rbrace)$
is surjective, and for any rational $\alpha \in H_2(F,\lbrace F_i\rbrace)$ we have an equality
$$\|\alpha\|=\inf_{p \in h^{-1}(\alpha)} -2\chi(p)$$
Since $h$ is rational linear, since $P^+(T)$ is a convex rational polyhedral cone, and since
$-\chi$ is rational linear on $P^+$, it follows that the unit ball in the Gromov norm is a finite
sided rational polyhedron. Moreover, if $\alpha$ is rational, the infimum is achieved on some rational
$p$, and by Proposition~\ref{proposition:can_be_reduced} any $p$ achieving the minimum is
projectively equivalent to $p(X)$ for some $X$, in which case $S(X)$ is an extremal surface
projectively representing $\alpha$.
\end{proof}

An absolute version of Theorem~\ref{theorem:relative_Gromov_norm} may be obtained by 
{\em doubling}.  

\begin{definition}\label{def:doubling}
If $G_i$ is a family of conjugacy classes of subgroups of $G$, 
we can build a space $DK$ from two copies of the mapping cylinder $K$ of
$\coprod_i K(G_i,1) \to K(G,1)$, identified along $\coprod_i K(G_i,1)$. The
{\em double} of $G$ along the $G_i$ is the fundamental group of $DK$.
\end{definition}

Note that the double is a graph of groups, whose underlying graph has two vertices 
(corresponding to the two copies of $G$ in the double) and with one edge between the two
vertices for each $G_i$.

\begin{theorem}[Gromov Norm of Doubles]\label{theorem:double_Gromov_norm}
Let $F$ be a finitely generated free group, and let $F_i$ be a finite collection of conjugacy
classes of finitely generated subgroups of $F$. Let $G$ be obtained by doubling $F$ along
the $F_i$. Then the unit ball in the Gromov norm on $H_2(G)$ is a finite sided rational
polyhedron, and each rational class is projectively represented by an extremal surface.
\end{theorem}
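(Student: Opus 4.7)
The plan is to deduce the Double Norm Theorem from Theorem~\ref{theorem:relative_Gromov_norm} by identifying $H_2(G)$ with $H_2(F,\{F_i\})$ and then showing that, under this identification, the absolute Gromov norm is exactly twice the relative one. All polyhedral and rationality properties, together with the existence of extremal representatives, then transfer automatically.

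First I would set up the homological identification via Mayer--Vietoris. The double $DK=K_1\cup_A K_2$ has $A=\coprod_i K(F_i,1)$ and $K_j\simeq K(F,1)$, so $H_2(A)=0$ and $H_2(K_j)=H_2(F)=0$. The Mayer--Vietoris sequence collapses to
$$0\to H_2(G)\to \bigoplus_i H_1(F_i)\to H_1(F)\oplus H_1(F),$$
and since the two inclusions $F_i\hookrightarrow F$ in the two copies of $K$ agree, the kernel of the last arrow equals $\ker(\oplus H_1(F_i)\to H_1(F))$, which is precisely the image of the boundary map $H_2(F,\{F_i\})\to\oplus H_1(F_i)$ from the long exact sequence of the pair (using $H_2(F)=0$). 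This produces a natural rational linear isomorphism $\Phi:H_2(G)\xrightarrow{\cong}H_2(F,\{F_i\})$.

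Next I would compare the two norms via doubling and cutting. Given a relative surface $(S,\partial S)\to(K,A)$ with $[S]=n\Phi(\alpha)$, its double $DS=S\cup_{\partial S}S'$ maps to $DK$ (one copy in each $K_j$), satisfies $\chi(DS)=2\chi(S)$ since $\chi(\partial S)=0$, and the Mayer--Vietoris connecting map sends $[DS]$ to $[\partial S]$, forcing $[DS]=n\alpha$ in $H_2(G)$. This yields $\|\alpha\|_G\le 2\|\Phi(\alpha)\|_{\mathrm{rel}}$. Conversely, given a closed surface $T\to DK$ with $[T]=n\alpha$, I would homotope $T$ transverse to $A$ and cut along $T\cap A$ to get relative surfaces $T_1\subset K_1$, $T_2\subset K_2$ with $\chi(T_1)+\chi(T_2)=\chi(T)$. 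Tracking Mayer--Vietoris gives $[T_1]=\pm n\Phi(\alpha)$ relatively, and since $\|\Phi(\alpha)\|_{\mathrm{rel}}=\|-\Phi(\alpha)\|_{\mathrm{rel}}$, taking the half with smaller $-\chi$ yields $\|\Phi(\alpha)\|_{\mathrm{rel}}\le\tfrac12\|\alpha\|_G$. Combining, $\|\alpha\|_G=2\|\Phi(\alpha)\|_{\mathrm{rel}}$.

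Finally I would invoke Theorem~\ref{theorem:relative_Gromov_norm}: its unit ball is a finite-sided rational polyhedron and every rational class is projectively represented by an extremal reduced-fatgraph surface. Transporting through the rational linear isomorphism $\Phi$ (which scales the norm by $2$), the unit ball for $\|\cdot\|_G$ is likewise a finite-sided rational polyhedron. For a rational $\alpha\in H_2(G)$, doubling an extremal relative representative $S(X)$ of $n\Phi(\alpha)$ produces a closed surface $DS(X)\to DK$ with $-2\chi(DS(X))/n=2\|\Phi(\alpha)\|_{\mathrm{rel}}=\|\alpha\|_G$, so $DS(X)$ is extremal and projectively represents $\alpha$.

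The main obstacle is the bookkeeping in the cut-and-double step: confirming that the halves of a transversally cut closed surface represent $\pm n\Phi(\alpha)$ under the correct Mayer--Vietoris sign conventions, and that doubling an extremal fatgraph surface -- whose boundary is only immersed in $A$, not embedded -- yields a bona fide singular surface in $DK$ whose Euler characteristic achieves the bound. Both issues dissolve by working with surface maps rather than embeddings, using Proposition~\ref{proposition:norm_is_surfaces} which only requires continuous maps realizing the class, together with asphericity of $A$ to ensure that no sphere components are introduced by the gluing.
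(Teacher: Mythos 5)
Your proposal is correct and follows essentially the same route as the paper: identify $H_2(G)\cong H_2(F,\lbrace F_i\rbrace)$ via Mayer--Vietoris (using $H_2(F)=0$), show the doubling isomorphism multiplies the norm by exactly $2$ via the double-a-relative-surface and cut-a-closed-surface-taking-the-smaller-half arguments, and then transport polyhedrality, rationality, and extremal fatgraph surfaces from Theorem~\ref{theorem:relative_Gromov_norm}. Your explicit handling of the sign ambiguity in the cut halves (using symmetry of the relative norm) is a slightly more careful version of the paper's assertion that both halves represent the same relative class, but the argument is the same.
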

\begin{proof}
This follows formally from Theorem~\ref{theorem:relative_Gromov_norm}. First of all, at the
level of homology there is a natural isomorphism $H_2(F,\lbrace F_i\rbrace) \to H_2(G)$ obtained
by identifying the $F$ factors on both sides of the double. The point is that this map is
surjective, since the $F$ factors have no absolute $H_2$ of their own (apply Mayer-Vietoris).

Any surface representing a relative class in $H_2(F,\lbrace F_i\rbrace)$ may be doubled to 
produce a closed surface representing a corresponding class in $H_2(G)$. Conversely, any surface
representing a class in $H_2(G)$ may be split into two subsurfaces on either side of the double, 
each representing the same relative class in $H_2(F,\lbrace F_i\rbrace)$. One of these subsurfaces
has $-\chi$ at most half of $-\chi$ of the big surface; doubling that subsurface produces a new
surface representing the same class in $H_2(G)$ with the same or smaller $-\chi$.

It follows that the doubling isomorphism $H_2(F,\lbrace F_i\rbrace) \to H_2(G)$ just multiplies
the norm of a class by $2$, and the double of any extremal surface for a class in 
$H_2(F,\lbrace F_i\rbrace)$ is an extremal surface for the corresponding class in $H_2(G)$.
\end{proof}

Since extremal surfaces are $\pi_1$-injective, we obtain the following corollary:

\begin{corollary}[Surface subgroups in doubles]\label{corollary:surface_subgroup_corollary}
Let $F$ be a finitely generated free group, and let $F_i$ be a finite collection of conjugacy
classes of finitely generated subgroups of $F$. Let $G$ be obtained by doubling $F$ along the
$F_i$. If $H_2(G)$ is nontrivial, then $G$ contains a surface subgroup.
\end{corollary}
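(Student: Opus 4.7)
The plan is to combine the Double Norm Theorem with a standard compression argument: given a nonzero class in $H_2(G)$, produce an extremal closed surface representing it, and then promote extremality to $\pi_1$-injectivity.

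First I would pick a nonzero rational class $\alpha \in H_2(G;\Q)$, which exists since $H_2(G) \ne 0$. The Double Norm Theorem then projectively represents $\alpha$ by an extremal closed oriented surface --- a map $f:S \to K(G,1)$ with $S$ having no sphere components, $f_*[S] = n\alpha$ for some integer $n \ge 1$, and $-2\chi(S)/n = \|\alpha\|$. Granted $\|\alpha\| > 0$, we get $-\chi(S) > 0$, and hence some component $S_0$ of $S$ has $\chi(S_0) < 0$.

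The heart of the matter is to show that $f|_{S_0}:\pi_1(S_0)\to G$ is injective, since its image will then be a surface subgroup of $G$. Suppose for contradiction that some nontrivial element of $\pi_1(S_0)$ maps to $1\in G$; by a surgery argument on the surface (take a shortest such loop, or invoke the loop-theorem heuristic), one can represent it by an essential simple closed curve $\gamma\subset S_0$ whose $f$-image is null-homotopic in $K(G,1)$. Now compress $S$ along $\gamma$: cut along $\gamma$ and cap the two new boundary circles with disks $D_\pm$, extending $f$ over $D_\pm$ using a null-homotopy of $f|_\gamma$. This produces a new closed oriented surface $S'$ with $\chi(S') = \chi(S) + 2$ and a new map $f':S'\to K(G,1)$ with $f'_*[S'] = n\alpha$. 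After discarding any sphere components of $S'$, one obtains an admissible surface for $n\alpha$ with $-\chi$ strictly smaller than $-\chi(S)$, contradicting the extremality of $S$. So $f|_{S_0}$ is $\pi_1$-injective, and $f_*\pi_1(S_0) \le G$ is the required surface subgroup.

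The step I expect to be the main obstacle --- and the only non-cosmetic one --- is ensuring that the chosen $\alpha$ has positive Gromov norm, which is what rules out the degenerate possibility that the extremal surface is a union of tori (giving only $\Z^2$ subgroups rather than a surface subgroup in the sense $\chi<0$). Via the doubling isomorphism $H_2(F,\{F_i\}) \to H_2(G)$ built in the proof of the Double Norm Theorem, which rescales the norm by a factor of $2$, positivity of $\|\alpha\|$ on a nonzero rational class reduces to positivity of the relative Gromov norm --- equivalently, of $\scl$ --- on nonzero classes in $H_2(F,\{F_i\})$; for free groups $F$ this follows from positivity of $\scl$ on nontrivial homologically trivial chains.
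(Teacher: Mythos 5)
Your route is the same as the paper's: the paper deduces the corollary in one line from Theorem~\ref{theorem:double_Gromov_norm} together with the standard fact that extremal surfaces are $\pi_1$-injective (which it simply cites, and which is proved in \cite{Calegari_scl} by the compression scheme you sketch). The gap is precisely in your version of that compression argument. From a nontrivial element of $\ker(\pi_1(S_0)\to G)$ you cannot in general extract an essential \emph{embedded} loop in the kernel: ``take a shortest such loop'' does not make it simple, and there is no loop theorem for maps of surfaces into aspherical complexes --- geometric incompressibility (what compressing along embedded loops achieves, and what the paper builds into the definition of an efficient surface) is strictly weaker than $\pi_1$-injectivity, and there are well-known non-injective surface maps whose kernels contain no simple closed curves. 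The standard repair, which is the actual content of the cited fact, is Scott's theorem that surface groups are LERF: a nontrivial kernel element lifts to an embedded essential loop in some finite cover $\tilde S\to S_0$ of degree $d$; since $-\chi(\tilde S)=-d\chi(S_0)$ and the covering degree over $L$ (resp.\ the represented class) scales by $d$ as well, the cover is again extremal, and compressing there yields the contradiction. Without this step (or some substitute) your injectivity argument does not close.

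Your second concern --- that the chosen class must have $\|\alpha\|>0$, else the extremal surface could be a union of tori --- is legitimate, and it is a point the paper's one-line proof passes over silently; but your proposed resolution is not correct as stated. It is false that $\scl$ is positive on all nontrivial homologically trivial chains: $\scl$ vanishes on chains such as $a+a^{-1}$, i.e.\ on chains that are nonzero as chains but trivial after quotienting by $g^n-ng$ and conjugation (the correct positivity statement, due to Duncan--Howie, is $\scl\ge 1/2$ only for chains nonzero in that quotient $B_1^H(F)$). Correspondingly, a nonzero class in $H_2(F,\lbrace F_i\rbrace)$ can have zero relative norm, and then the degenerate possibility really occurs: doubling $F=\langle a\rangle$ along two copies of itself gives $G\cong\Z^2$, with $H_2(G)\cong\Z$ nontrivial, every extremal surface a torus, and no subgroup isomorphic to the fundamental group of a closed surface with $\chi<0$. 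So positivity of the norm on some nonzero class is an additional issue that neither your reduction nor the paper's statement actually disposes of; an honest write-up should either add a hypothesis ruling out such degenerate doublings (e.g.\ positivity of the norm, or hyperbolicity of $G$) or verify positivity in the intended situations rather than derive it from a general positivity of $\scl$ that does not hold.
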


For example, if $\sum \rank(F_i)>\rank(F)$ then $H_2(G)$ is nontrivial.

\begin{remark}
Theorem~\ref{theorem:double_Gromov_norm} should be compared to the case that $G=\pi_1(M)$
where $M$ is an irreducible 3-manifold. Then $\|\cdot\|$ is equal to twice the
{\em Thurston norm} on $H_2(M)$, whose unit ball Thurston famously proved is a finite-sided
rational polyhedron \cite{Thurston_norm}. There is a crucial difference between the two Theorems:
in a 3-manifold, every integral $\alpha$ is represented by a norm-minimizing embedded surface
$S$, so that $[S] =\alpha$, and therefore $\|\alpha\| \in 4\Z$, whereas for $G$ as in
Theorem~\ref{theorem:double_Gromov_norm}, the denominator of $\|\alpha\|$ can be arbitrary
for $\alpha \in H_2(G;\Z)$. This is true even when $G$ is obtained by doubling a free group
of rank 2 along a cyclic subgroup; see \cite{CW_endomorphism}. 
\end{remark}

\section{Random endomorphisms}
\subsection{HNN extensions}

Let $F$ be a finitely generated free group, and let $\phi:F \to F$ be an injective endomorphism. We obtain
an HNN extension $G:=F*_\phi$. Geometrically we can realize $F=\pi_1(R)$ for some rose $R$ as
above, and $\phi$ by a simplicial map $f:R \to R$, and build a mapping torus $K$ which is
a CW 2-complex, with one 2-cell (a square) for each generator of $F$.

There is a natural presentation 
$$G:=\langle F, t \; | \; tFt^{-1} = \phi(F)\rangle$$
and a surjection $G \to \Z$ defined by $t \to 1$ and $F \to 0$. Let $\tilde{K}$ denote the
infinite cyclic cover of $K$ associated to the kernel of this surjection; $\tilde{K}$ is
made from $\Z$ copies of $R\times I$, which we denote $K_i$ for $i \in \Z$. Denote the copy of
$R\times 1$ in $K_i$ by $\partial^+ K_i$ and the copy of $R\times 0$ in $K_i$ by $\partial^- K_i$. Then
$\tilde{K}$ is obtained by gluing each $\partial^+ K_i$ to $\partial^- K_{i+1}$ by a map
$f_i$ (which is just $f$ when we identify both domain and range in a natural way with $R$).

For any positive $n$ we denote the union $K_0 \cup_{f_0} K_1 \cup_{f_1}\cdots \cup_{f_{n-1}} K_n$ by
$K_0^n$. Observe that $K_0^n$ deformation retracts to $\partial^+ K_n$, and therefore its fundamental
group is free and isomorphic to $F$.

\subsection{$f$-fatgraphs}

Fix a rose $R$ for $F$ and a simplicial map $f:R \to R$ representing $\phi:F \to F$.

\begin{definition}\label{definition:f_fatgraph}
An $f$-fatgraph $X$ over $R$ ({\em not} assumed to be reduced or without 1-valent vertices)
is a fatgraph $g:X \to R$ together with a decomposition of
$\partial S(X)$ into submanifolds $\partial^-$ and $\partial^+$ (each a union of components)
so that there is an orientation-reversing homeomorphism $f':\partial^- \to \partial^+$
lifting $f$ (i.e.\/ satisfying $gf'=fg$ where by abuse of notation we denote the
composition $\partial S(X) \to X \to R$ by $g$).
\end{definition}

If $X$ is an $f$-fatgraph over $R$, we can replace $g:X \to R$ with a homotopic map 
of homotopy equivalent spaces $S(X) \to R\times I$, sending $\partial^-$ to $R\times 0$
and $\partial^+$ to $R\times 1$. By the defining property of an $f$-fatgraph, if we
denote by $S*_f(X)$ the closed oriented surface obtained from $S(X)$ by gluing 
$\partial^-$ to $\partial^+$ by $f'$, then the map from $S(X)$ to $K$ factors through
$S*_f(X) \to K$. Thus $f$-fatgraphs induce maps from surface groups to $F*_\phi$.
The converse is the following lemma:
\begin{lemma}
Let $S$ be a closed oriented surface, and $g:S \to K$ a map. Then $S$ and $g$
can be compressed to a surface $g':S' \to K$ which is homotopic to a map of the form
$S*_f(X) \to K$ associated to an $f$-fatgraph $X$ over $R$ with $\partial^-$ immersed in $R$.
\end{lemma}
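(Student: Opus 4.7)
\emph{Approach.} The plan is to construct $X$ by slicing $S$ along the preimage of a codimension-1 slice of $K$ and then recognising each slice as a fatgraph via Proposition~\ref{proposition:surface_is_fatgraph}. Let $R_0 \subset K$ denote the copy of the rose sitting at ``level $0$'' in the mapping torus of $f$, so that $K \setminus R_0 \cong R \times (0,1)$. First I would homotope $g$ to be transverse to $R_0$; then $L := g^{-1}(R_0)$ is an oriented 1-submanifold of $S$, transversely oriented by the positive $I$-direction in $K$.

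\emph{Cutting and $f$-fatgraph structure.} Cut $S$ open along $L$ to obtain a (possibly disconnected) compact oriented surface $S_0$. Each circle of $L$ yields two boundary circles of $S_0$; with the appropriate convention for the normal direction, these split as $\partial S_0 = \partial^- \sqcup \partial^+$. The restriction $g|_{S_0}$ factors through the mapping cylinder $R \times I$ of $f$, sending $\partial^-$ to $R \times \{0\}$ and $\partial^+$ to $R \times \{1\}$, and composing with the retraction $R \times I \to R$ gives a map $S_0 \to R$ which is the underlying map of the fatgraph we seek to extract. Moreover, the identifications that recover $S$ from $S_0$ provide an orientation-reversing homeomorphism $f' : \partial^- \to \partial^+$ satisfying $g f' = f g$, which is precisely the $f$-fatgraph datum of Definition~\ref{definition:f_fatgraph}.

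\emph{Simplification.} Next I would simplify $S_0$ in two steps. First, discard sphere components of $S_0$ and compress along essential embedded loops in $S_0$ bounding disks in $R \times I$; each such compression strictly reduces $-\chi(S_0)$. Second, straighten $L$: whenever $L \to R_0$ fails to be locally injective at a point $p$, the two adjacent arcs of $L$ meeting at $p$ map to opposite-oriented copies of the same edge of $R_0$ and bound a bigon in $R_0$, over which one can homotope $g$ so as to push the offending piece of $L$ off $R_0$, strictly reducing $|L|$. After finitely many such moves $L \to R_0$ is an immersion, and hence so is the restriction $\partial^- \to R$ (and similarly $\partial^+ \to R$, since both are copies of $L$ mapped to $R_0 = R$ via the same inclusion). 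Proposition~\ref{proposition:surface_is_fatgraph} then exhibits each component of $S_0$ as $S(X)$ for a reduced fatgraph $X$ over $R$; the $\partial^\pm$ decomposition and gluing $f'$ promote $X$ to an $f$-fatgraph, and by construction $S*_f(X) \to K$ is homotopic to the simplified $g'$.

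\emph{Main obstacle.} The delicate part is the simplification phase. The two sides of the cut are coupled by $f'$, so every move that reduces $|L|$ or $-\chi(S_0)$ must be consistent with the gluing lifting $f$, and compressions of $S_0$ rel boundary must not merge or destroy circles of $L$ in a way that spoils the cut structure. The cleanest way to organise this is to fix a lexicographic complexity such as $(-\chi(S), |L|, \#\{\text{backtracks of } L \to R_0\})$, verify that each of the proposed moves strictly decreases it, and conclude that the process terminates at a configuration in which $\partial^- \to R$ is an immersion and $S_0$ has the form $S(X)$ for an $f$-fatgraph $X$.
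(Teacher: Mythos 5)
Your overall route is the same as the paper's: make $g$ transverse to the fiber rose, cut along the preimage $L$, compress, and invoke Proposition~\ref{proposition:surface_is_fatgraph}. However, there is a genuine gap at the last step, in the parenthetical claim that $\partial^+ \to R$ is also an immersion ``since both are copies of $L$ mapped to $R_0=R$ via the same inclusion.'' The two boundary copies of $L$ do \emph{not} carry the same map to $R$ once you pass to the cut-open model $R\times I$: the side limiting onto $R_0$ through the identification $(x,1)\sim(f(x),0)$ maps to $R\times 1\cong R$ by a lift of $g|_L$ through $f$, not by $g|_L$ itself. Equivalently, once you impose the $f$-fatgraph compatibility $gf'=fg$ (as you correctly do), the boundary word on $\partial^+$ is forced to be the \emph{unreduced} $f$-image of the word on $\partial^-$. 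If $f$ is not an immersion --- which is exactly the situation of the Random $f$-folded Surface Theorem, where $f$ only has bounded folding --- then after you reduce $\partial^-$ the map $\partial^+\to R$ generically fails to be locally injective, and no homotopy can make both sides immersed while preserving $gf'=fg$. This is precisely why the lemma asserts only that $\partial^-$ is immersed, and why Definition~\ref{definition:f_fatgraph} explicitly allows unreduced fatgraphs with $1$-valent vertices.

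Because of this, your appeal to Proposition~\ref{proposition:surface_is_fatgraph} is not justified as stated: that proposition applies to surfaces whose entire boundary immerses in $R$, and $\partial^+$ need not. The paper's fix, which your argument is missing, is to Stallings-fold the non-immersed boundary, writing $\partial^+\to\partial^{++}\to R$ with $\partial^{++}$ immersed (each component of $\partial^+$ is essential, so folding only collapses intervals to trees), apply Proposition~\ref{proposition:surface_is_fatgraph} to the immersed collection $\partial^-\cup\partial^{++}$, and then graft trees onto the resulting reduced fatgraph to obtain a possibly non-reduced fatgraph $X'$ with $\partial S(X')=\partial^-\cup\partial^+$, which carries the $f$-fatgraph structure. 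You should add this step (restricting to $f$ an immersion would sidestep it, but is not enough for the paper's applications). A smaller omission: components of $S$ disjoint from $L$ become closed surfaces mapping into $R\times I$, and to compress them away you need an argument such as the paper's observation that a map of a closed oriented surface to a space homotopy equivalent to a graph extends over a handlebody; discarding spheres and compressing embedded loops that bound disks does not by itself dispose of them.
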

\begin{proof}
First, throw away sphere components of $S$.
Make $g$ transverse to $R\times 0 \subset K$, so that the preimage is a system of embedded
loops $\Gamma$ in $S$. Inductively eliminate innermost complementary disks by an isotopy.
Furthermore, if some component of $\Gamma$ maps to a homotopically trivial loop in
$R\times 0$, we compress $S$ and $g$ along this loop
If $S_i$ is a component of $S$ that does not meet $\Gamma$ then
$g:S_i \to K$ factors through $S_i \to R\times I$; but any map from a closed oriented surface
to a space homotopic to a graph extends over a handlebody, so $S_i$ can be completely compressed
away. Thus we eventually arrive at $g':S' \to K$ which can be cut open along the remaining
loops $\Gamma'$ to produce a proper map $g'':S'' \to R\times I$, every boundary component of which
maps to an essential loop. Compress $S''$ further if possible. The boundary $\partial S''$
decomposes into $\partial^-$ and $\partial^+$, and the way these sit in $S'$
determines an orientation-reversing homeomorphism $\partial^- \to \partial^+$. We homotope
the map on $\partial^-$ so that it is immersed in $R$, and homotop the map on $\partial^+$
to be equal to its image under $f$. Note that if $f$ is not an immersion, neither is
the map $\partial^+ \to R$ necessarily. But $\partial^+ \to R$ factors through 
$\partial^+ \to \partial^{++} \to R$, where the first map folds some intervals into trees, and
the second map is an immersion (this is just Stallings' folding procedure applied to
$\partial^+$, together with the fact that each component maps to an essential loop in $R$).

By Proposition~\ref{proposition:surface_is_fatgraph} there is some reduced fatgraph $X$
with $\partial S(X) = \partial^- \cup \partial^{++}$; adding some trees to $X$ we obtain
a (possibly non-reduced) fatgraph $X'$ with $\partial S(X') = \partial^- \cup \partial^+$,
giving $X$ the structure of an $f$-fatgraph with $S*_f(X') \to K$ homotopic to $S' \to K$.
\end{proof}

This Lemma lets us study surfaces in $K$ (and surface subgroups mapping to $G$) combinatorially. But
actually we are interested in going in the other direction, building $f$-fatgraphs and then
using them to construct surfaces and surface subgroups in $G$.

\subsection{Stacking surfaces and fattening stacks}

If $g:X \to R$ is an immersed fatgraph over $R$ (not necessarily reduced) 
then we denote by $f(g):f(X) \to R$
the fatgraph over $R$ with the same underlying topological space as $X$, but with $f(g)=f\circ g$
and $f(X)$ subdivided so that this map takes edges to edges. If $X$ is an $f$-fatgraph, then so
is $f(X)$, and there is a natural orientation-reversing simplicial homeomorphism between
$\partial^+S(X)$ and $\partial^-S(f(X))$. Iterating this procedure, we can build a surface
$$S_n(X):=S(X) \cup S(f(X)) \cup \cdots \cup S(f^n(X))$$
The boundary labels of the $\partial S(f^n(X))$ are words obtained by applying $\phi$ 
by substitution repeatedly to the generators on the edges of $\partial S(f^i(X))$; i.e.\/ we do
{\em not} perform cancellation if these words are not reduced. See Figure~\ref{figure5}.


\begin{figure}[ht]
\labellist
\small\hair 2pt
 \pinlabel {$a$} at 11 24
 \pinlabel {$a$} at 93 48
 \pinlabel {$b$} at 86 52
 \pinlabel {$B$} at 69 48
 \pinlabel {$a$} at 63 39
 \pinlabel {$B$} at 46 32
 \pinlabel {$A$} at 0 36
 \pinlabel {$b$} at 46 13.5
 \pinlabel {$a$} at 65 9
 \pinlabel {$b$} at 99 -1
 \pinlabel {$b$} at 87 25.5
 \pinlabel {$A$} at 75 22.8
 \pinlabel {$A$} at 77 28.2
 \pinlabel {$A$} at 93 31
 \pinlabel {$A$} at 102 30
 \pinlabel {$B$} at 101 24
 \pinlabel {$B$} at 122 3
 \pinlabel {$a$} at 117 42.6
 
 \pinlabel {$b$} at 119 51
 \pinlabel {$f(a)$} at 146 22
 \pinlabel {$b$} at 174 6
 \pinlabel {$b$} at 176 26
 \tiny
 \pinlabel {$f(A)$} at 157 30
 \pinlabel {$f(A)$} at 155 37.5
 \pinlabel {$f(A)$} at 173 39
 \pinlabel {$f(A)$} at 189 33
 \small
 \pinlabel {$B$} at 170 16.5
 \pinlabel {$B$} at 194 6
 \pinlabel {$f(a)$} at 200 41.5                                                                                                                                 
 \pinlabel {$f(a)$} at 178 49.5                                                                                                                                 
 \pinlabel {$b$} at 171.5 56                                                                                                                                 
 \pinlabel {$B$} at 161.5 56                                                                                                                                 
 \pinlabel {$f(a)$} at 149 47                                                                                                                                 
 \pinlabel {$B$} at 129 57 
\endlabellist
\centering
\includegraphics[scale=1.7]{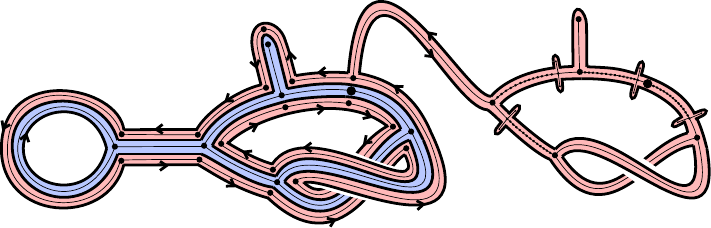}
\caption{Let $f(a) = AbbaaaaBBABabAbBA$ and $f(b) = b$.  
This figure shows a fatgraph $S(X)$ (blue) with boundary $\partial^-S(X) = a$ 
and $\partial^+S(X) = f(A)$.  The fatgraph $S(f(X))$ (red) is glued by 
identifying $\partial^+S(X)$ and $\partial^-(S(f(X))$, as shown.  Typically, 
a failure to be reduced will come from cancellation between $f(a)$ and $f(b)$.  
Here we have made $f(a)$ non-reduced for illustrative purposes.}
\label{figure5}
\end{figure}

Each $S(f^i(X))$ deformation retracts to $f^i(X)$, so there is an induced quotient map from $S_n(X)$ to
a graph $X_n$. Now, although each individual $S(f^i(X))$ is homotopy equivalent to $f^i(X)$,
it is {\em not} necessarily true that $S_n(X)$ is homotopy equivalent to $X_n$. However, this
can be guaranteed by imposing a simple condition.

\begin{lemma}\label{lemma:embedding}
Suppose that $\partial^- S(X) \to X$ is an embedding; equivalently, that no vertex of $X$ is in the
image of more than one vertex of $\partial^- S(X)$ under the deformation retraction from
$S(X)$ to $X$. Then $X_n$ admits the structure of a fatgraph in a natural way so that
$S_n(X)=S(X_n)$.
\end{lemma}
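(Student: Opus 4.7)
The plan is to proceed by induction on $n$, building the fatgraph structure on $X_n$ by specifying a cyclic ordering of half-edges at each vertex and then verifying that fattening this structure recovers $S_n(X)$.

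For the base case $n=0$ there is nothing to prove: $X_0 = X$ is already a fatgraph with $S_0(X) = S(X) = S(X_0)$. For the inductive step, I would assume $X_{n-1}$ has a fatgraph structure with $S(X_{n-1}) = S_{n-1}(X)$, and analyze the gluing $S_n(X) = S_{n-1}(X) \cup S(f^n(X))$ along the homeomorphism $\partial^+ S(f^{n-1}(X)) \cong \partial^- S(f^n(X))$. The key local observation is that at each vertex $v$ of $f^n(X)$, the embedding hypothesis (which applies because $f^n(X)$ has the same underlying graph as $X$) forces the polygon at $v$ in $S(f^n(X))$ to have at most one $\partial^-$ corner. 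Thus each such polygon is attached to $S_{n-1}(X)$ along a single corner arc, glued to a single $\partial^+$ corner of some vertex polygon in $S_{n-1}(X)$.

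I would then define the cyclic order at each vertex of $X_n$ by splicing. At a vertex $w$ of $X_{n-1}$ with cyclic order $h_1, \ldots, h_k$, each $\partial^+$ corner of $w$'s polygon in $S_{n-1}(X)$ (say between $h_i$ and $h_{i+1}$) is the site of attachment of a unique polygon $P$ from $S(f^n(X))$. Starting immediately after the unique $\partial^-$ corner of $P$, the half-edges of $P$ are inserted into the cyclic order at $w$ between $h_i$ and $h_{i+1}$. At vertices of $X_n$ newly introduced by $f^n(X)$ with no $\partial^-$ attachment, the cyclic order is inherited directly from $f^n(X)$. This produces a well-defined fatgraph structure, and by construction the polygon at each vertex of $X_n$ is precisely the merged polygon obtained in $S_n(X)$ from the boundary gluing, while the edge rectangles are unchanged from those of $X_{n-1}$ and $f^n(X)$; hence $S(X_n) = S_n(X)$.

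The main subtlety, and the point where the embedding hypothesis enters essentially, is verifying that this splicing is unambiguous. If a polygon in $S(f^n(X))$ were allowed to have two or more $\partial^-$ corners, a single polygon would be attached to $S_{n-1}(X)$ at two different positions; the merged region at the corresponding vertex of $X_n$ would then no longer need to be a disk (it could acquire a handle or an extra boundary component), and the fatgraph interpretation would collapse. The embedding hypothesis rules this out, forcing each constituent polygon to attach along a single corner and ensuring that the merged polygon at each vertex of $X_n$ is a disk with a well-defined cyclic order on its boundary half-edges.
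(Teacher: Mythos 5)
Your strategy---induct on $n$, and use the hypothesis to show each vertex polygon of $f^n(X)$ has at most one $\partial^-$ corner, so that no polygon of the new layer attaches to $S_{n-1}(X)$ in two places---is the right idea, and is essentially a level-by-level version of the paper's argument (the paper instead decomposes all of $S_n(X)$ at once into the glued tracks of the retractions and shows each glued component is a tree by orienting every track toward its unique $\partial^-$ endpoint). However, your bookkeeping of the gluing has a genuine error. The surfaces are glued along the entire $1$-manifold $\partial^+ S(f^{n-1}(X))\cong \partial^- S(f^n(X))$, not only at polygon corners: each boundary edge of this $1$-manifold is a side of an edge-rectangle of $f^n(X)$ on one side and a side of an edge-rectangle of the lower layers on the other, so these rectangles stack into a single rectangle and the corresponding edges of the two levels are \emph{identified} in the quotient graph $X_n$ (this is exactly why, in the proof of Proposition~\ref{proposition:f_folded_injective}, gluing a vertex of $f^n(X)$ onto a $2$-valent vertex produces a vertex of $X_n$ locally isomorphic to the one in $f^n(X)$). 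Consequently the two half-edges of $P$ adjacent to its unique $\partial^-$ corner are not new half-edges at the merged vertex: they coincide with $h_i$ and $h_{i+1}$, and the merged valence is $k+\ell-2$, not $k+\ell$. Your splice, which keeps ``the edge rectangles unchanged from those of $X_{n-1}$ and $f^n(X)$'' and inserts all half-edges of $P$, therefore does not construct the quotient graph $X_n$ defined just before the lemma, and its fattening is not $S_n(X)$: your graph has one extra edge for every edge of $f^n(X)$ over which $\partial^-$ runs, hence strictly smaller Euler characteristic than $\chi(X_n)=\chi(S_n(X))=(n+1)\chi(X)$.

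A minimal test case makes this concrete: let $X$ be a single loop mapped homeomorphically to a petal of $R$ fixed by $f$, so $S(X)$ is an annulus with $\partial^-$ and $\partial^+$ its two boundary circles. Then $S_1(X)$ is again an annulus and $X_1$ is a circle, but your splice inserts the two half-edges of the level-one bigon at the $\partial^+$ corner of the level-zero bigon, producing a single $4$-valent vertex with two loops, whose fattening has $\chi=-1$. The repair is local and in the spirit of what you wrote: when splicing $P$ at a corner, identify its two corner-adjacent half-edges with $h_i$ and $h_{i+1}$ (and correspondingly identify the underlying edges, and their far endpoints, across levels), inserting only the remaining half-edges of $P$ as new. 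With that correction, your inductive argument does recover the paper's conclusion, with the hypothesis entering exactly where you say it does.
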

\begin{proof}
Each $S(f^i(X))$ deformation retracts to $f^i(X)$, and the tracks (i.e.\/ point preimages)
of this deformation are proper essential arcs which retract to points in the edges of $X$, 
and proper essential trees which retract to the vertices of $X$. Glue up the tracks of the 
deformation retraction for $S(f^i(X))$ to the tracks in $S(f^{i+1}(X))$ by the identification
of the boundaries; the result is a decomposition of $S_n(X)$ into {\em graphs}, in such a way that
$X_n$ is the quotient space obtained by quotienting each graph to a point. We claim 
that each such graph is a tree. Since these trees are disjointly embedded in $S_n(X)$, 
we can embed $X_n$ as a spine of $S_n(X)$ in a natural way, giving it the structure of a fatgraph
with $S(X_n)=S_n(X)$.

If $\tau$ is a track in some $S(f^i(X))$, then $\tau$ has at most one boundary point on
$\partial^-$ (by hypothesis). Define an orientation on the edges of $\tau$ in such a way that
the edges all point towards this unique boundary point on $\partial^-$ (if one exists), or towards
the unique point on $f^i(X)$ that $\tau$ deformation retracts to otherwise. See Figure~\ref{figure6}.

\begin{figure}[ht]
\labellist
\small\hair 2pt
 \pinlabel {$\partial^-$} at 28 37
 \pinlabel {$\partial^+$} at 26 7
 \pinlabel {$\partial^+$} at 57 22
\endlabellist
\centering
\includegraphics[scale=1.5]{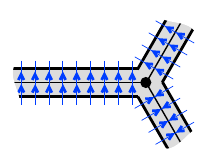}
\caption{The flow points towards $\partial^-$.}
\label{figure6}
\end{figure}

Then each graph
$T$ which is a maximal connected union of tracks in the various $f^i(X)$ gets an orientation on
its edges in such a way that each vertex has {\em at most} one outgoing edge. Thus $T$
can be canonically deformation retracted along oriented edges to a (necessarily unique) minimum,
and $T$ is a tree.
\end{proof}

Now, if $X$ is an $f$-fatgraph, we distinguish, amongst the vertices of $\partial^+$, those
which are in the image of vertices of $\partial^-$ under $f$, and call these {\em $f$-vertices}.

\begin{definition}\label{definition:f_folded}
An $f$-fatgraph $g:X \to R$ is {\em $f$-folded} if it satisfies the following conditions:
\begin{enumerate}
\item{the underlying map of graphs $X \to R$ is an immersion;}
\item{every $f$-vertex in $\partial^+$ maps to a  
2-valent vertex of $X$ under the retraction $\partial^+ \to X$;}
\item{no vertex of $X$ is in the image of more than one $f$-vertex in $\partial^+$; and}
\item{the map $\partial^- \to X$ is an embedding.}
\end{enumerate}
\end{definition}

The first condition says that the underlying map of graphs
$X \to R$ is folded in the sense of Stallings.
If $X$ has no 1-valent vertices, this implies that $X$ is reduced, but in general
$\partial S(X)$ will contain consecutive pairs of cancelling letters at 1-valent vertices
of $X$.

\begin{proposition}\label{proposition:f_folded_injective}
Suppose $f:R \to R$ is an immersion, and $X$ is $f$-folded. Then $S*_f(X) \to K$ is $\pi_1$-injective.
\end{proposition}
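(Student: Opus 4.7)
The plan is to pass to infinite cyclic covers and reduce $\pi_1$-injectivity to a Stallings-folding condition. Form $\tilde{S}\to S*_f(X)$ and $\tilde{K}\to K$ corresponding to the surjections to $\Z$ that count the (signed) number of times the $f$-gluing is crossed. We obtain a commutative square of $\Z$-covers fitting into short exact sequences $1\to\pi_1(\tilde{S})\to\pi_1(S*_f(X))\to\Z\to 1$ and $1\to\pi_1(\tilde{K})\to G\to\Z\to 1$, with matching identifications on the $\Z$ quotients. A five-lemma chase shows that $\pi_1(S*_f(X))\to G$ is injective provided the induced map $\pi_1(\tilde{S})\to\pi_1(\tilde{K})$ is injective, so we focus on the latter.

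Using condition~(4) of $f$-foldedness and iterating Lemma~\ref{lemma:embedding}, each finite stack $S_n(X)=S(X)\cup S(f(X))\cup\cdots\cup S(f^n(X))$ deformation retracts onto a fatgraph $X_n$ with $S(X_n)=S_n(X)$, and taking the direct limit gives a fatgraph spine $\tilde{X}$ for $\tilde{S}$ exhausted by the $X_n$. On the target side, each $K_0^n$ deformation retracts onto $\partial^+K_n\cong R$, so $\pi_1(K_0^n)=F$, and $\pi_1(\tilde{K})=\varinjlim(F\xrightarrow{\phi}F\xrightarrow{\phi}\cdots)$ is the ascending direct limit under~$\phi$.

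The heart of the argument is to show that the composition $X_n\hookrightarrow K_0^n\to R$, with the second map the iterated retraction to $\partial^+K_n$, is a Stallings-folded graph immersion. On each layer $f^i(X)\subset X_n$ the composition equals $f^{n-i}\circ g_{f^i(X)}=f^n\circ g_X$; since $g_X$ is an immersion by condition~(1) and $f$ is an immersion by hypothesis, the restriction to each layer is an immersion. The only possible failures of Stallings foldedness therefore occur at vertices of $X_n$ arising from the gluings between consecutive layers. At such a vertex, condition~(2) forces the $\partial^+$-side contribution to be at a $2$-valent vertex of $X$, contributing only two oriented edges; condition~(3) prevents two distinct gluings from collapsing to the same vertex of $X$; and condition~(4) ensures that the $\partial^-$-side contribution from the next layer sits over the identified vertex via a single well-defined branch. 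A direct case analysis, using the defining relation $g_X\circ f'=f\circ g_X|_{\partial^-}$ of an $f$-fatgraph to pin down the local picture in $R$, verifies that no two oriented edges at the combined vertex map to the same edge of $R$, so $X_n\to R$ is folded.

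It follows that $\pi_1(X_n)\hookrightarrow F=\pi_1(K_0^n)$ for every $n$, compatibly with the direct systems, so $\pi_1(\tilde{X})=\varinjlim\pi_1(X_n)\hookrightarrow\pi_1(\tilde{K})$. Combined with the five-lemma chase, this yields $\pi_1(S*_f(X))\hookrightarrow G$. The main obstacle is the immersion verification at glued vertices: the $f$-folded conditions are precisely engineered so that each potential failure mode for Stallings foldedness is ruled out by a corresponding clause of Definition~\ref{definition:f_folded}.
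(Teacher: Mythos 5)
Your outline coincides with the paper's: pass to the infinite cyclic covers (your five-lemma chase is a repackaging of the paper's observation that a loop in the kernel lifts to the cyclic cover of $S*_f(X)$ and lies over some finite union of the $K_i$), invoke condition (4) and Lemma~\ref{lemma:embedding} to replace the finite stack $S_n(X)$ by its spine $X_n$, identify $\pi_1$ of the cover of $K$ with $\varinjlim(F\xrightarrow{\phi}F\to\cdots)$, and then conclude by showing that a graph map $X_n\to R$ is Stallings folded, hence $\pi_1$-injective. So the strategy is the paper's; the problem lies in the step you call the heart of the argument.

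Two issues there. First, the map you propose to fold is mis-identified: the layerwise formulas $f^{n-i}\circ g_{f^i(X)}=f^n\circ g_X$ do not glue to a map on $X_n$. Writing $\bar g$ for the composition $\partial S(X)\to X\xrightarrow{g}R$, an identified vertex of $X_n$ comes from a point $f'(y)\in\partial^+S(f^i(X))$ and the point $y\in\partial^-S(f^{i+1}(X))$; your layer-$i$ formula sends it to $f^{n-i}\bigl(f^i\bar g(f'(y))\bigr)=f^{n+1}(\bar g(y))$, while your layer-$(i+1)$ formula sends it to $f^{n}(\bar g(y))$. These differ by one application of $f$, precisely because the inter-layer identification already absorbs one $f$ (this is the relation $g\circ f'=f\circ g|_{\partial^-}$ you quote). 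The well-defined simplicial map, and the one the paper analyzes, is the labelling map in which the layer $f^i(X)$ maps by $f^i\circ g$; your "$f^n\circ g_X$ on every layer" object does not exist, so a foldedness argument for it cannot be run as stated. Second, even for the correct map, the crucial verification at the glued vertices is asserted rather than proved: "a direct case analysis \dots verifies that no two oriented edges at the combined vertex map to the same edge of $R$" is a restatement of what must be shown, and it is exactly where conditions (2) and (3) of Definition~\ref{definition:f_folded} do their work. The paper supplies the mechanism by induction on $n$: any vertex of $X_n$ whose valence grows when the top layer is attached is attached at a $2$-valent vertex, via a unique $f$-vertex, so its link in $X_n$ is label-isomorphic to the link of a vertex of $f^n(X)$, which is immersed because $g$ and $f$ are (see Figure~\ref{figure7}). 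Supplying that local identification (for the labelling map, not the one you wrote down) is what your proposal is missing.
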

\begin{proof}
First, since $X\to R$ is an immersion by condition (1), and $f:R \to R$ is an immersion by
hypothesis, it follows that $f^i(X) \to R$ is an immersion for each $i$.

If $S*_f(X) \to K$ is not injective, there is some loop in the kernel. Such a loop lifts
to a loop in the infinite cyclic cover of $S*_f(X)$ which maps to $\tilde{K}$ and is contained
in the preimage of some $K_n$. But this preimage is exactly $S_n(X)$, so it suffices to show
that $S_n(X)$ maps injectively. Condition (4) implies that $S_n(X)$ is homotopy equivalent to
the fatgraph $X_n$, so it suffices to prove that $X_n \to R$ is injective, and to do this it
suffices to show that it is an immersion. But this is a local condition, and is proved by
induction on $n$, since the case $n=0$ is condition (1), and conditions (2) and (3) imply that
each vertex of $X_n$ of valence $>2$ whose restriction to $X_{n-1}$ has valence $2$ is locally
isomorphic to some vertex in $f^n(X)$, which we already saw is immersed in $R$. See Figure~\ref{figure7}.
This completes the proof.
\end{proof}

\begin{figure}[ht]
\labellist
\small\hair 2pt
\endlabellist
\centering
\includegraphics[scale=1.5]{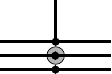}
\caption{At each $f$-vertex (highlighted), subsequent gluings 
attach at most one vertex of valence greater than two.  See also 
Figure~\ref{figure5}, in which the $f$-vertices are bold.}
\label{figure7}
\end{figure}

\subsection{Bounded folding}\label{subsection:bounded_folding}

For technical reasons, it is important to generalize this proposition and the definition of
$f$-foldedness to the case that $f:R \to R$ is not an immersion, but satisfies a slightly
weaker property, that we call {\em bounded folding}.

If $g:X \to Y$ is a map between graphs taking edges to edges, {\em Stallings folding} shows
how to construct canonically a quotient $\pi:X \to X'$ which is a map between graphs taking edges
to edges, and an immersion $X' \to Y$, so that the composition $X \to X' \to Y$ is $g$.

\begin{definition}
Let $g:X \to Y$ be a map of graphs, and let $X'$ be obtained by folding, so that 
$X'$ immerses in $Y$ and there is $\pi:X \to X'$ so that $X \to X' \to Y$ is $g$.
We say that $g$ has {\em bounded folding} if there is a collection of disjoint simplicial 
trees $T_i'$ in $X'$ so that each preimage $T_i:=\pi^{-1}(T_i')$ is a connected tree in $X$
containing at most one vertex of valence $>2$, 
and $\pi$ is a homeomorphism of $X-\cup_i T_i \to X' - \cup_i T_i'$ and
a proper homotopy equivalence of $T_i \to T_i'$ for each $i$.
Call the union of the $T_i$ the {\em folding region}, and denote it by $\fold(X)$; 
the complement of the folding region in $X$ is the {\em immersed region}.
\end{definition}

\begin{figure}[ht]
\labellist
\small\hair 2pt
 \pinlabel {$a$} at 50 30
 \pinlabel {$b$} at 63 35
 \pinlabel {$A$} at 76 40
 \pinlabel {$a$} at 37 31
 \pinlabel {$a$} at 24 36.5
 \pinlabel {$b$} at 7 43
 \pinlabel {$a$} at 36 11
 \pinlabel {$a$} at 24 5
 \pinlabel {$B$} at 9 -1
 \pinlabel {$b$} at 51 13
 \pinlabel {$a$} at 65 7
 \pinlabel {$B$} at 77 0
 
 \pinlabel {$b$} at 130 35
 \pinlabel {$a$} at 144 30
 \pinlabel {$b$} at 159 35
 \pinlabel {$a$} at 165.5 30
 \pinlabel {$b$} at 184 23
 \pinlabel {$a$} at 196 16
 \pinlabel {$B$} at 210 9
 \pinlabel {$B$} at 130 13
\endlabellist
\centering
\includegraphics[scale=1.5]{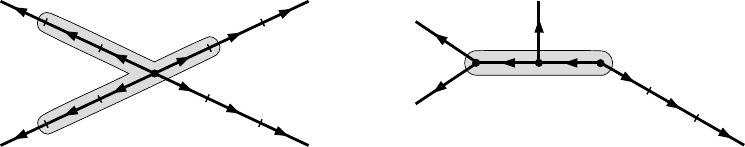}
\caption{The gray region, left, indicates all edges involved in folding 
(the \emph{folding region}). After folding, the gray region is reduced 
to the region at right.}
\label{figure8}
\end{figure}

Note that $\fold(X)$ is precisely the preimage of the set of edges of $X'$ with
more than one preimage. Note also that if $g:X \to Y$ is a map with bounded folding, 
then $\pi:X \to X'$ is a homotopy equivalence, so $g$ is $\pi_1$-injective. 

Topologically, a map with bounded folding is an immersion outside a small tree neighborhood
of some vertices, and collapses each such neighborhood by a proper homotopy
equivalence to a smaller tree.

Now, the map $f:R \to R$ is not simplicial, since edges of $R$ get generally taken to long
paths in $R$. Let $R_1$ denote a rose with edges labeled by reduced words which are the image of the generators
of $F$ under $\phi:F \to F$ (assume none of these is trivial) and
subdivide edges of $R_1$ so that each edge gets one generator. Then we can factorize $f:R \to R$
as the composition of a {\em homeomorphism} $h_1:R \to R_1$ and a simplicial map $R_1 \to R$.

\begin{definition}
With notation as above, and by abuse of notation, we say that $f:R \to R$ 
has {\em bounded folding} if $R_1 \to R$ has bounded folding.
\end{definition}

If $f:R \to R$ has bounded folding, 
either $R_1 \to R$ is an immersion, or else $\fold(R_1)$ consists of a single tree 
with a single vertex of valence $>2$ which corresponds to the vertex of $R$ under
$h^{-1}$. See Figure~\ref{figure9}.

\begin{figure}[ht]
\labellist
\small\hair 2pt
 \pinlabel {$a$} at 61.5 14
 \pinlabel {$b$} at 23 72
 \pinlabel {$c$} at 0 9
 
 \pinlabel {$a$} at 135 24
 \pinlabel {$b$} at 137 6
 \pinlabel {$A$} at 158 3
 \pinlabel {$c$} at 166.5 21
 \pinlabel {$b$} at 152 37
 \pinlabel {$A$} at 136 42
 
 \pinlabel {$a$} at 122 48
 \pinlabel {$c$} at 135 57
 \pinlabel {$c$} at 132 75
 \pinlabel {$B$} at 114 83
 \pinlabel {$c$} at 100 71
 \pinlabel {$A$} at 102 53
 \pinlabel {$A$} at 108 42
 
 \pinlabel {$a$} at 112.5 28.5
 \pinlabel {$c$} at 97 34
 \pinlabel {$A$} at 83.5 19
 \pinlabel {$b$} at 86 3
 \pinlabel {$a$} at 102 -2
 \pinlabel {$c$} at 117 8
 \pinlabel {$A$} at 123.5 17
\endlabellist
\centering
\includegraphics[scale=1.5]{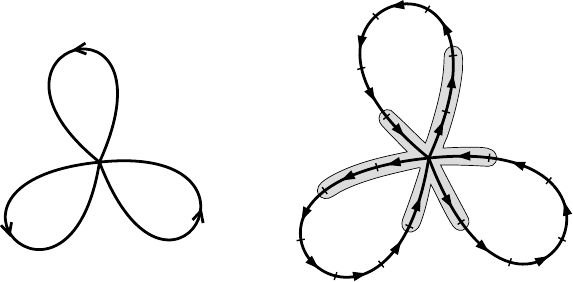}
\caption{The gray region indicates $\fold(R_1)$ for the endomorphism 
$a\mapsto abAcbA$, $b\mapsto accBcAA$, $c \mapsto acAbacA$.}
\label{figure9}
\end{figure}

Let $R_2$ be another rose whose edges are labeled by the {\em unreduced} words,
obtained by applying $\phi$ to each letter of the edge labels of $R_1$, and define $R_n$
similarly by induction. So there are homeomorphisms $h_n:R \to R_n$ and a simplicial map
$R_n \to R$ for which the composition $R \to R$ is $f^n$. By abuse of notation we also write
$f:R_{i-1} \to R_i$ for each $i$. Observe that $\fold(R_n)$ contains $f(\fold(R_{n-1}))$,
and the components of $\fold(R_n)-f(\fold(R_{n-1}))$ are intervals, none
of which contains the image of a vertex of $\fold(R_{n-1})$ (except possibly at an endpoint).

Now, suppose $g:X \to R$ is an $f$-folded $f$-fatgraph over $R$. 
We might be able to realize $\partial^- \to R$ by an immersion, but it is unlikely
that $\partial^+ \to R$ can be realized by an immersion if $f:R \to R$ is not an immersion.

\begin{definition}
Let $g:\partial^- \to R$ be an immersion, and let $h:\partial^+ \to R_1$ be obtained by
applying $f$ to both sides of $g$. Define $\Sigma^+$ to be the
preimage $\Sigma^+:=h^{-1}(\fold(R_1))$.
\end{definition}

Note that $\fold(\partial^+)$ is contained in $\Sigma^+$, which is a collection of 
intervals (it can't be all of $\partial^+$ because $\partial^- \to R$ is an immersion).

\begin{lemma}\label{lemma:peripheral_tree}
Let $w$ be a nonreduced cyclic word which is nontrivial, and let $V$ be the reduced cyclic word
which is inverse to $w$. Then $w \cup V = \partial S(Y(w))$ for an immersed fatgraph
$g:Y(w) \to R$ which consists of a circle (the embedded image of $V$) with a collection
of rooted trees attached, one for each component of $\fold(w)$.
\end{lemma}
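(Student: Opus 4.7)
The plan is to construct $Y(w)$ as the graph obtained by Stallings folding $w$, to identify its structure, and then to equip it with a fatgraph structure that yields the correct boundary.

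First I would realize $w$ as an admissible map $\gamma: C \to R$ from an oriented simplicial circle $C$ with one edge per letter of $w$, and apply Stallings folding to factor $\gamma = g \circ \pi$ with $\pi: C \to Y(w)$ collapsing the fold region and $g: Y(w) \to R$ an immersion. Since $w$ is nontrivial, $\gamma$ is $\pi_1$-injective; combined with the fact that $\pi$ restricts to a proper homotopy equivalence on each fold component (a subarc of $C$), the map $\pi$ is itself a homotopy equivalence, so $\pi_1(Y(w)) \cong \mathbb{Z}$.

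Next I would identify the structure of $Y(w)$. The set $C \setminus \fold(C)$ is a disjoint union of open arcs which $\pi$ maps homeomorphically into $Y(w)$; these arcs meet at their endpoints (which map to the ``root'' vertices of the fold trees) and assemble into an embedded circle $\bar{C} \subset Y(w)$ immersing onto the cyclic reduction of $w$, equivalently labeled by $V$ in the reversed orientation. The complement $Y(w) \setminus \bar{C}$ then consists of the images $T'_j := \pi(T_j)$ of the components $T_j$ of $\fold(C)$, each a tree rooted at the single vertex of $\bar{C}$ where it attaches. The two endpoints of each arc $T_j \subset C$ must be identified by $\pi$, since otherwise the two non-fold arcs adjacent to $T_j$ could not concatenate into an immersed circle at that point.

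The fatgraph structure on $Y(w)$ is inherited from $C$ via $\pi$: at each vertex $v \in Y(w)$, traversing $C$ visits the preimages $\pi^{-1}(v)$ in a cyclic order, and each such visit carries a definite incoming and outgoing edge, yielding a consistent cyclic ordering on the edges incident to $v$ in $Y(w)$. Equivalently, $S(Y(w))$ may be constructed directly as the mapping cylinder of $\pi$, which is topologically an annulus whose outer boundary is canonically identified with $C$ (hence has label $w$) and whose inner boundary is the boundary of a regular neighborhood of $\bar{C}$ (hence has label $V$). This gives $\partial S(Y(w)) = w \cup V$.

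The main obstacle is justifying that this mapping-cylinder picture produces a bona fide fatgraph thickening with the stated boundaries. The key point is that when the outer boundary visits a vertex $v \in Y(w)$ multiple times -- once for each preimage in $\pi^{-1}(v)$ -- the resulting pattern of incoming and outgoing edges must be consistent with a single cyclic order on the edges at $v$. This consistency follows from the bounded-folding hypothesis, which forces each fiber of $\pi$ over a vertex to be a contractible subtree embedded in $C$ in a manner compatible with the planar cyclic structure required by a fatgraph.
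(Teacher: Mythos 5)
The heart of the lemma is missing from your argument. You take $Y(w)$ to be the result of \emph{complete} Stallings folding of the circle $C$, and you assert that $S(Y(w))$ can be realized as the mapping cylinder of $\pi\colon C\to Y(w)$, ``topologically an annulus'' with outer boundary $w$. Neither claim holds in the generality you are working in. The mapping cylinder of a non-injective simplicial map fails to be a surface wherever an edge of $Y(w)$ has three or more preimages (locally it is a book with several pages), and such edges do occur for nonreduced cyclic words. Worse, complete folding can identify a fold arc with \emph{surviving} edges of the core circle, after which no choice of cyclic orders on $Y(w)$ puts $w$ on the boundary at all: for $w=baAac$ (cyclically, in $F=\langle a,b,c\rangle$) the spike $aA$ folds onto the following letter $a$, the fully folded graph is just the circle labelled $bac$, whose fattening is an annulus with boundary $bac\cup CAB\neq w\cup V$; in this example the endpoints of the fold arc are not even identified by $\pi$, contrary to your intermediate claim, and in fact no immersed fatgraph whatsoever has $baAac$ as a boundary word (the subword $aAa$ would force two distinct $a$--edges leaving one vertex). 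So the ``consistency of cyclic orders'' that you flag as the main obstacle is not a formality, and your appeal to ``the bounded-folding hypothesis'' cannot close it: the lemma as stated has no such hypothesis, bounded folding is a property of the map $f\colon R\to R$ rather than of the cancellation pattern of the cyclic word $w$, and even granting it you give no argument that the fibers of $\pi$ are compatible with a planar cyclic structure.

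The paper's proof proceeds differently and sidesteps both problems: embed the circle labelled $w$ in the plane and perform the Stallings folds \emph{inside} the disk it bounds, stopping once the inner boundary has become the reduced word. The planar embedding supplies the fatgraph structure, the outer boundary remains $w$ letter for letter by construction, and each component of $\fold(w)$ is folded to a rooted tree hanging inward from the core circle labelled $V$. Your instinct that some extra input is needed for immersedness is correct, but the relevant input is not bounded folding of $f$; it is the special form of the cancellation in the word to which the lemma is applied (each component of $\fold(\partial^+)$ is a maximal inverse pair $uu^{-1}$ straddling an $f$-vertex, and reducedness of the words $\phi(x)$ together with maximality of the cancellation guarantees the folded graph is immersed at the roots). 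Examples like $w=baAac$ show that without some such input the conclusion can genuinely fail, so any correct write-up must either invoke that structure or perform the folding planarly as the paper does.
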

\begin{proof}
This is just the observation that $w$ can be repeatedly Stallings 
folded to produce $v$ (the inverse of $V$); if we embed $w$ in the plane, the folds can
all be done to the ``inside'', producing a planar graph $Y(w)$ at the end with inner boundary
$V$ and outer boundary $w$. The embedding in the plane gives $Y(w)$ its fatgraph structure.
\end{proof}

If $w=\partial^+$ and $\Sigma^+$ is as above, each component of $\fold(w)$ is contained in
a component of $\Sigma^+$ and folds up to a tree in $Y(w)$ as
in Lemma~\ref{lemma:peripheral_tree}. The image of the component of $\Sigma^+$ is this
tree together possibly with an interval neighborhood of its root; we call this entire
image a {\em peripheral tree}, and denote the union of these trees by $\Sigma$. See Figure~\ref{figure10}.


\begin{figure}[ht]
\labellist
\small\hair 2pt
 \pinlabel {$a$} at 98 36
 \pinlabel {$b$} at 88 45
 \pinlabel {$A$} at 74 48
 \pinlabel {$c$} at 60 48
 \pinlabel {$b$} at 48 49
 \pinlabel {$A$} at 33 49
 \pinlabel {$a$} at 17 49
 \pinlabel {$b$} at 4 42
 \pinlabel {$A$} at -2 31
 \pinlabel {$c$} at -1 19
 \pinlabel {$b$} at 8 7
 \pinlabel {$A$} at 21 2
 \pinlabel {$a$} at 35 3
 \pinlabel {$c$} at 47.5 3.5
 \pinlabel {$c$} at 59 3
 \pinlabel {$B$} at 69 3.5
 \pinlabel {$c$} at 81 5
 \pinlabel {$A$} at 91 9
 \pinlabel {$A$} at 99 20
 
 \pinlabel {$a$} at 212 33
 \pinlabel {$b$} at 204 38
 \pinlabel {$A$} at 185 45
 \pinlabel {$c$} at 170 45
 \pinlabel {$b$} at 154 45
 \pinlabel {$A$} at 149.5 50.5
 \pinlabel {$a$} at 138 50
 \pinlabel {$b$} at 134 42
 \pinlabel {$A$} at 126 34
 \pinlabel {$c$} at 127 20
 \pinlabel {$b$} at 135 11
 \pinlabel {$A$} at 139.5 4
 \pinlabel {$a$} at 150 4
 \pinlabel {$c$} at 155 8
 \pinlabel {$c$} at 164 9
 \pinlabel {$B$} at 177 9
 \pinlabel {$c$} at 193 9.5
 \pinlabel {$A$} at 204 18
 \pinlabel {$A$} at 212 22

\endlabellist
\centering
\includegraphics[scale=1.6]{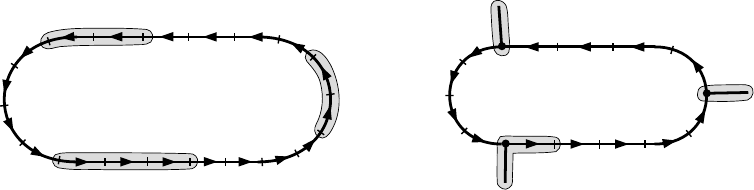}
\caption{Applying the endomorphism from Figure~\ref{figure9} to the loop $aab$ 
produces the loop at left, with the folding region in gray.  After folding, 
the folding region is reduced to a collection of peripheral trees, right.}
\label{figure10}
\end{figure}

\begin{definition}\label{definition:grafting}
Let $w$ be a possibly unreduced nontrivial cyclic word, 
and $\Sigma^+$ a collection of embedded intervals
containing $\fold(w)$. Let $Y(w)$ be as in the statement of
Lemma~\ref{lemma:peripheral_tree}, and let $\Sigma$ be the union of peripheral trees in 
$Y(w)$.

An inclusion of $Y(w)$ into another immersed fatgraph $X$ is a {\em grafting of $Y(w)$} 
if it satisfies the following properties:
\begin{enumerate}
\item{$w$ is a component of $\partial S(X)$;}
\item{all 1-valent vertices of $X$ are in $\Sigma$; and}
\item{every vertex of $\Sigma$ has the same valence in $Y(w)$ as in $X$.}
\end{enumerate}
Let $Y'$ be the fatgraph obtained from $X$ by cutting off the peripheral trees at their roots.
Then we say $X$ is obtained by {\em grafting $Y(w)$ onto $Y'$}.
\end{definition}

\begin{definition}
Suppose $f:R \to R$ has bounded folding, and let $X$ be an $f$-fatgraph $g:X \to R$
immersed in $R$. We say that $g:X \to R$ admits 
{\em bounded $f$-folding} if the following is true: 
\begin{enumerate}
\item{$X$ is obtained by grafting $Y(\partial^+)$, where as above 
$\Sigma^+ \subset \partial^+$ is defined to be $h^{-1}(\fold(R_1))$;}
\item{$g:X \to R$ is $f$-folded in the sense of Definition~\ref{definition:f_folded},
except that it is possible that some $f$-vertices in $\partial^+$ map to a 1-valent
vertex of $X$ on the boundary of a peripheral tree;}
\item{distinct $f$-vertices map to different components of $\Sigma$; and}
\item{the image of $\partial^-$ is disjoint from $\Sigma$.}
\end{enumerate}
\end{definition}

\begin{proposition}\label{proposition:bounded_f_folding_injective}
Suppose $f:R \to R$ has bounded folding, and $g:X \to R$ admits bounded $f$-folding. Then
$S*_f(X) \to K$ is $\pi_1$-injective.
\end{proposition}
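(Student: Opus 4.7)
The plan is to adapt the proof of Proposition~\ref{proposition:f_folded_injective}, replacing ``immersion'' with ``map with bounded folding'' throughout, since any map with bounded folding is a homotopy equivalence onto an immersed quotient and hence $\pi_1$-injective. First, exactly as in the immersion case, any element of the kernel of $\pi_1(S*_f(X)) \to \pi_1(K)$ lifts to a loop in the infinite cyclic cover of $S*_f(X)$, and this lifted loop is contained in the preimage of some $K_0^n$, which is the stacked surface $S_n(X) := S(X) \cup S(f(X)) \cup \cdots \cup S(f^n(X))$. Since $K_0^n$ deformation retracts to $\partial^+ K_n \cong R$, it suffices to show that $S_n(X) \to R$ is $\pi_1$-injective for every $n \ge 0$.

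The next step is to equip $S_n(X)$ with a fatgraph structure $X_n$ via an analogue of Lemma~\ref{lemma:embedding}. Condition (4) of bounded $f$-folding ensures the image of $\partial^-$ is disjoint from $\Sigma$, and condition (3) ensures distinct $f$-vertices lie in distinct components of $\Sigma$; together these let the tracks of the deformation retractions $S(f^i(X)) \to f^i(X)$ glue across the interfaces $\partial^+ S(f^i(X)) \cong \partial^- S(f^{i+1}(X))$ into properly embedded trees in $S_n(X)$. Collapsing these trees gives a fatgraph $X_n$ with $S(X_n) = S_n(X)$, so it suffices to show that $X_n \to R$ is $\pi_1$-injective, which will follow from showing it has bounded folding.

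I would then prove by induction on $n$ that $X_n \to R$ has bounded folding. The base case $n=0$ holds by condition (1): $X$ is a grafting of $Y(\partial^+)$, so $X \to R$ is an immersion off the peripheral trees $\Sigma$, and the folding within $\Sigma$ is, by Lemma~\ref{lemma:peripheral_tree}, exactly the folding of $R_1 \to R$ pulled back along $h$. For the inductive step, passing from $X_{n-1}$ to $X_n$ glues $f^n(X)$ along the top boundary of the stack; by conditions (2) and (3) each $f$-vertex contributes locally at most one new vertex of valence $>2$, with neighborhood modeled on the image of $\fold(R_n)$ under $R_n \to R$. Since $\fold(R_n)$ is obtained from $f(\fold(R_{n-1}))$ by attaching only interval extensions (as noted in the paragraph defining $R_n$), this new folding region is a disjoint union of tree neighborhoods in $X_n$, giving bounded folding at level $n$.

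The main technical obstacle is verifying that the fold regions contributed at successive levels remain disjoint from each other and from the immersed region of $X_{n-1}$, which is a necessary condition for bounded folding. This is precisely what conditions (3) and (4) of bounded $f$-folding are designed to enforce: condition (3) prevents folds at different $f$-vertices from colliding, while condition (4) prevents new folds from spilling into the immersed region already established at lower levels. With this separation in hand, the inductive construction of the fold region proceeds cleanly, and bounded folding of $X_n \to R$ gives the desired $\pi_1$-injectivity, completing the proof.
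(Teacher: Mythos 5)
Your overall frame (reduce to $\pi_1$-injectivity of $S_n(X)$, give it a fatgraph structure $X_n$ via Lemma~\ref{lemma:embedding}, then show $X_n \to R$ has bounded folding by induction on $n$) matches the paper, but your inductive step has a genuine gap, and it sits exactly where the paper's key idea lives. You decompose the stack as $X_n = X_{n-1} \cup f^n(X)$, attaching the new layer at the top, and claim the new folding is ``modeled on the image of $\fold(R_n)$'' and is a disjoint union of tree neighborhoods because $\fold(R_n)-f(\fold(R_{n-1}))$ consists of intervals. This does not work as stated: the top layer maps to $R$ by $f^n\circ g$, so its folding is that of the $n$-th iterate, which the hypotheses do not directly control (the conditions of bounded $f$-folding only reference $\Sigma^+=h^{-1}(\fold(R_1))$, i.e.\ a single application of $f$), and the observation about $\fold(R_n)$ concerns the rose, not the fatgraph: it says nothing about whether a single folding component of $X_n$ can contain two vertices of valence $>2$, nor about whether folding in $f^n(X)$ merges, across the gluing interface, with folding components of $X_{n-1}$ --- and your interface lies between two heavily folded pieces, $f^{n-1}(X)$ and $f^n(X)$, where no explicit control is available. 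The paper runs the induction the other way: it writes $X_n = X \cup f(X_{n-1})$, gluing the fixed immersed piece $X$ at the bottom to $f$ of the previously built stack. Then the inductive hypothesis (bounded folding of $X_{n-1}$) only needs to be pushed through one application of the boundedly folded map $f$, via $\fold(f(X_{n-1})) \supseteq f(\fold(X_{n-1}))$ with small interval differences, and the only gluing interface is adjacent to $X$, where conditions (2)--(4) of bounded $f$-folding and the grafting structure give explicit control. The heart of the paper's argument --- checking that no component of $\fold(X_n)$ contains two vertices of valence at least $3$, by separating $f$-vertices deep in $f(X_{n-1})$ from the image of $X$ by an edge of $X_{n-1}-\partial^-$, using condition (3) for the $f$-vertices on $f(\partial^-)$, and noting that $\fold(X_n)\cap X$ can exceed $\fold(X)$ only in the intervals containing the roots of the peripheral trees, where grafting forbids high-valence vertices --- is absent from your argument and cannot be replaced by the appeal to $\fold(R_n)$.

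Two smaller points. The fatgraph structure on $S_n(X)$ comes from Lemma~\ref{lemma:embedding}, whose hypothesis is that $\partial^- \to X$ is an embedding; this is condition (4) of Definition~\ref{definition:f_folded}, imported through condition (2) of bounded $f$-folding, not conditions (3)--(4) of bounded $f$-folding as you state (condition (4) there only gives disjointness of $\partial^-$ from $\Sigma$). Also, in the base case $X \to R$ is an immersion by hypothesis, so it has bounded folding trivially (empty folding region); the discussion of folding inside $\Sigma$ is not needed there.
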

\begin{proof}
We can build a surface $S_n(X)$ and a fatgraph $X_n$ as before, where $S_n(X)=S(X_n)$, 
since $\partial^- \to X$ is an embedding, and Lemma~\ref{lemma:embedding}. 

We claim that $X_n \to R$ has bounded folding, 
and is therefore $\pi_1$-injective. 
We build $X_n$ from $X$ and $f(X_{n-1})$, by
gluing $\partial^+$ in $X$ to $f(\partial^-)$ in $f(X_{n-1})$. Note that 
the inclusion of $X$ in $X_n$ is an {\em embedding}, since $X$ is attached
by identifying $\partial^+$ with $f(\partial^-)$ which {\em embeds} in $f(X_{n-1})$. 

We assume by induction that $X_{n-1}$ has bounded folding. Then so does $f(X_{n-1})$, since 
$\fold(f(X_{n-1}))-f(\fold(X_{n-1}))$ consists of a union of small intervals, none of which
contains the image of a vertex of $X_{n-1}$ except possibly at the endpoints (this is a general
property of the fact that $f$ has bounded folding and $g$ is an immersion).

We need to check that no two vertices of $X_n$ of valence at least 3 are contained in
the same component of $\fold(X_n)$. The vertices of $X_n$ of valence at least 3 are all
images of a vertex of valence at least 3 either in $f(X_{n-1})$ or in $X$. Moreover,
the vertices of valence at least 3 in $f(X_{n-1})$ are the images of vertices of
valence at least 3 in $X_{n-1}$. By abuse of notation, we
refer to the images of {\em all} the vertices of $X_{n-1}$ in $f(X_{n-1})$ as
$f$-vertices; the ordinary $f$-vertices in $X$ are glued to the $f$-vertices (in the new sense)
of $f(\partial^-)$. 

Every $f$-vertex in $f(X_{n-1})$ not in $f(\partial^-)$ is thus separated
from the image of $X$ in $X_n$ by the image of an edge of $X_{n-1}-\partial^-$, and
the endpoints of this edge are necessarily in different components of $\fold(X_n)$. Distinct
$f$-vertices in $f(\partial^-)$ must map to distinct vertices of $X$, and no component of
$\Sigma$ contains the image of more than one of them, by condition (3); thus
components of $\fold(X_n)$ cannot contain more than one such $f$-vertex.

So we just need to check that distinct high-valence vertices of $X$ are not included
into the same component of $\fold(X_n)$. Now, it is not necessarily true that
$\fold(X_n) \cap X$ is equal to $\fold(X)$, but the difference is contained in
$\Sigma$ minus the peripheral trees (i.e.\/ in the intervals of $X$ containing the
roots of the peripheral trees) and by the defining properties of grafting, there are no other
high valence vertices there.
\end{proof}

\begin{remark}
If one is prepared to work with {\em groupoid} generators for $F$ rather than group generators,
this contents of this section are superfluous in most cases of interest. Although most 
injective endomorphisms $\phi:F \to F$ are not represented by immersions of some rose $R$,
Reynolds \cite{Reynolds} showed that if $\phi$ is an {\em irreducible endomorphism} which is
{\em not} an automorphism, then there is some graph $R'$ (typically with more than one vertex)
and an isomorphism of $F$ with $\pi_1(R')$, so that $\phi$ is represented by an immersion
$f:R' \to R'$. If one wants to find injective surface subgroups in extensions $F*_\phi$ then 
in practice it is much easier to work with $f$-folded $f$-fatgraphs over such an $R'$, than
with boundedly $f$-folded $f$-fatgraphs over a rose $R$.
\end{remark}

\subsection{Random endomorphisms}

\begin{definition}
Fix a free group $F$ and a free generating set. A {\em random endomorphism of length $n$}
is an endomorphism $\phi:F \to F$ which takes each generator to a reduced word of length $n$
sampled randomly and independently from the set of all reduced words of length $n$ with
the uniform distribution.
\end{definition}

We require an elementary lemma from probability:

\begin{lemma}\label{lemma:small_prefix}
Let $\phi:F \to F$ be a random endomorphism of length $n$. There for any positive constant $C$
there is a positive constant $c$
depending only on the rank of $F$ so that with probability $1-O(e^{-n^c})$,
for every two distinct generators or inverses of generators $x$, $y$ the reduced words
$\phi(x)$ and $\phi(y)$ have a common prefix or suffix of length $\le C\log{n}$.
\end{lemma}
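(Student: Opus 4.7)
The plan is to apply a direct union bound. There are $2k(2k-1)$ ordered pairs $(x,y)$ of distinct elements in $\{a_1^{\pm 1}, \ldots, a_k^{\pm 1}\}$, where $k=\rank(F)$; this is a fixed number independent of $n$, so it suffices to bound, for each such pair, the probability that $\phi(x)$ and $\phi(y)$ share a prefix (and, symmetrically, a suffix) of length exceeding $C\log n$, and then apply the union bound.

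I would split into two cases. In the \emph{independent} case $y \ne x^{-1}$, the random words $\phi(x)$ and $\phi(y)$ are functions of disjoint entries of the independent tuple $(\phi(a_1),\ldots,\phi(a_k))$ (with inversion preserving the uniform distribution on reduced words of length $n$), so they are themselves independent uniformly random reduced words of length $n$. A uniformly random reduced word of length $n$ is built by picking its first letter uniformly among $2k$ options and each subsequent letter uniformly among the $2k-1$ non-cancelling options, so the probability that two independent such words agree on their first $m$ letters equals $(2k)^{-1}(2k-1)^{-(m-1)}$. In the \emph{dependent} case $y = x^{-1}$, we have $\phi(x^{-1}) = \phi(x)^{-1}$, and writing $\phi(x) = w_1\cdots w_n$ the condition of sharing a prefix of length $m$ becomes the palindromic-type system $w_i = w_{n-i+1}^{-1}$ for $i=1,\ldots,m$. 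For $m \le n/2$ these constraints involve disjoint positions, and a direct count of the reduced words satisfying them --- first $m$ letters free (which determines the last $m$), middle segment a reduced word of length $n-2m$ with endpoint cancellation constraints --- yields at most $2k(2k-1)^{n-m-1}$ such words out of the total $2k(2k-1)^{n-1}$, giving the same $O((2k-1)^{-m})$ bound. Common suffixes reduce to common prefixes via word reversal, which is a bijection preserving the uniform distribution; in the dependent case, the palindromic calculation in fact shows the suffix condition is equivalent to the prefix condition.

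Setting $m = \lfloor C\log n \rfloor$ and union-bounding over the $O(k^2)$ pairs and over the prefix/suffix dichotomy, I would obtain a total failure probability of order $O(k^2(2k-1)^{-C\log n}) = O(n^{-C\log(2k-1)})$. This polynomial-in-$n$ decay already suffices for any application that only requires the failure probability to tend to zero; the stated stretched-exponential bound $O(e^{-n^c})$ is strictly stronger and would appear to require some refinement beyond the direct calculation above (perhaps a different scaling of the prefix-length cutoff). The only genuinely delicate step in the plan is the palindromic counting in the dependent case, which relies on the disjointness $m \le n/2$; the independent case and the reduction of suffixes to prefixes are both essentially immediate.
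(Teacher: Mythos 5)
Your argument is essentially the paper's own: the printed proof simply generates the two words letter by letter, observes that the chance of a match at each step is bounded above by a constant less than one, and (implicitly) union-bounds over the finitely many pairs, so your estimate $\tfrac{1}{2k}(2k-1)^{-(m-1)}$ for an independent pair is exactly the intended computation; your explicit treatment of the dependent pair $y=x^{-1}$ via the palindromic constraints $w_i=w_{n-i+1}^{-1}$, and of suffixes via reversal, supplies details the paper compresses into ``it suffices to obtain such an estimate for two random words.'' As for your closing caveat, you are not missing a refinement: for fixed $C$ the probability that a single pair has a common prefix longer than $C\log n$ is of order $n^{-C\log(2k-1)}$ from \emph{below} as well as above, so the $O(e^{-n^c})$ rate is unattainable with a $\log n$ cutoff, and the paper's own two-line proof likewise only yields a polynomial bound; the stretched-exponential rate in the statement should be read as an overstatement (it would be correct if the cutoff grew like a power of $n$, but the later application cannot afford that, since Lemma~\ref{lemma:log_word_present} requires the folding region to have diameter $O(\log n)$ with a small constant). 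What is actually used downstream is only that the failure probability is small, which your polynomial bound provides, so your proof is as strong as the paper's.
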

\begin{proof}
It suffices to obtain such an estimate for two random words. Generate the
words letter by letter; at each step the chance that there is a mismatch is at least
$(k-1)/k$. The estimate follows.
\end{proof}

It follows that if $\phi:F \to F$ is a random endomorphism, the map $f:R \to R$
has bounded folding, and the diameter of $\fold(R_1)$ in $R_1$ is at most $2C\log{n}$,
with probability $1-O(e^{-n^c})$, where we may choose $C$ as small as we like at the cost of
making $c$ small.

We now come to the main theorem of this section, the Random $f$-folded Surface Theorem:

\begin{theorem}[Random $f$-folded surface]\label{thm:random_f_folded_theorem}
Let $k\ge 2$ be fixed, and let $F$ be a free group of rank $k$. Let $\phi$ be a
random endomorphism of $F$ of length $n$. Then the probability that $F*_\phi$ contains
an essential surface subgroup is at least $1-O(e^{-n^c})$ for some $c>0$.
\end{theorem}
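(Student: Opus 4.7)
The plan is to construct a boundedly $f$-folded $f$-fatgraph $X$ with high probability, and then invoke Proposition~\ref{proposition:bounded_f_folding_injective} to conclude that $S*_f(X) \to K$ yields an essential surface subgroup of $F*_\phi$. The first step is to condition on a good event for the random endomorphism. By Lemma~\ref{lemma:small_prefix}, with probability $1-O(e^{-n^c})$ the map $f:R\to R$ has bounded folding, and moreover $\fold(R_1)$ has diameter $O(\log n)$ in $R_1$. Conditioning on this, the bounded folding hypothesis of Proposition~\ref{proposition:bounded_f_folding_injective} is satisfied, and to construct a surface subgroup it suffices to exhibit a boundedly $f$-folded $f$-fatgraph.

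The second step is to translate the existence of such an $f$-fatgraph into a rational linear programming problem of the type developed in Section~\ref{section:traintrack_rationality}. Concretely, choose a traintrack $T\to R$ whose edges are reduced words of some length $\ell$ (comparable to $C\log n$, beyond the diameter of the fold region), and whose admissible turns correspond to reduced words of length $\ell+1$. Then every reduced loop in $R$ is carried by $T$. The plan is to look at the polygon cone $P^+(T)$ and impose additional linear constraints encoding the $f$-folded conditions of Definition~\ref{definition:f_folded}: namely that boundary components pair into $\partial^-$ and $\partial^+ = f(\partial^-)$ (orientation-reversed), that the $f$-vertices on $\partial^+$ sit at $2$-valent vertices of the underlying graph, and that distinct $f$-vertices are associated to distinct positions (or distinct peripheral-tree components after grafting). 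All of these conditions are \emph{linear} in the polygon weights, so we obtain a rational polyhedral cone $Q\subset P^+(T)$ whose rational integer points parameterize candidate boundedly $f$-folded fatgraphs.

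The main step, and the principal obstacle, is showing that for random $\phi$ the cone $Q$ contains a non-trivial rational ray with high probability. The intuition is that two independent random words $\phi(a), \phi(b)$ of length $n$ are \emph{locally} very generic: they contain, with overwhelming probability, essentially every reduced word of length $O(\log n)$ as a subword in abundant and balanced quantities. A careful counting argument, combined with Chernoff/Azuma-type concentration, should show that the polygon constraints can be satisfied with non-negative rational weights: one expects to cut the $\phi$-images into short pieces that can be locally reassembled into rectangles and triangles matching the $f$-fatgraph pairing. The technically delicate part is controlling both the boundary constraint (matching the $f$-images exactly) and the $f$-folded genericity conditions (disjointness of high-valence vertices, embedding of $\partial^-$) simultaneously, while keeping the failure probability exponentially small in $n^c$.

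Once the LP feasibility is established on the good event, Proposition~\ref{proposition:can_be_reduced} (or its appropriate adaptation in the $f$-fatgraph setting) produces an actual fatgraph $X\to R$ realizing the weight vector, with $\partial^+$ grafted appropriately so as to absorb the folding region of $f$. By construction $X$ admits bounded $f$-folding, so Proposition~\ref{proposition:bounded_f_folding_injective} implies $S*_f(X) \to K$ is $\pi_1$-injective; since the LP solution is non-trivial the surface has $\chi<0$ and hence gives an essential surface subgroup. Combining the failure probability of bounded folding with the failure probability of LP feasibility gives the claimed bound $1-O(e^{-n^c})$ after perhaps decreasing $c$.
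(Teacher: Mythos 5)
Your top-level skeleton matches the paper's: condition on Lemma~\ref{lemma:small_prefix} to get bounded folding with fold region of diameter $O(\log n)$, build a boundedly $f$-folded $f$-fatgraph, and quote Proposition~\ref{proposition:bounded_f_folding_injective}. But the heart of the theorem is the construction of that fatgraph, and your proposal does not actually supply it. The claim that the $f$-foldedness conditions can be imposed as \emph{linear} constraints on polygon weights over a traintrack is unjustified and, as stated, false: conditions (3) and (4) of Definition~\ref{definition:f_folded} (no vertex of $X$ is the image of more than one $f$-vertex; $\partial^-\to X$ is an embedding) are global injectivity statements about the assembled fatgraph, whereas a polygon weight only records multiplicities of local pieces, and many inequivalent assemblies realize the same weight vector. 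Proposition~\ref{proposition:can_be_reduced} re-glues pieces arbitrarily and performs compressions, and nothing guarantees that the resulting $X$ satisfies these non-local conditions; similarly, the requirement that $\partial^+$ be $f(\partial^-)$ with reversed orientation is a bijective pairing of boundary components, not a weight identity. So even if your cone $Q$ were nonempty, a rational point of it would not certify a boundedly $f$-folded fatgraph.

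The second gap is that the main probabilistic step --- feasibility of your LP with probability $1-O(e^{-n^c})$ --- is exactly the hard part, and you only assert that a counting/concentration argument ``should'' give it. Note that your LP grows with $n$ (edges of length $\sim C\log n$), so there is no fixed finite problem whose feasibility can be certified once and for all; you would need an argument uniform in $n$. The paper takes a genuinely different route precisely to avoid this: it constructs the fatgraph explicitly in stages (fold the neighborhoods $\Sigma^+$ of the $f$-vertices into peripheral trees and cancel the short reduced words $\Sigma^-$ against copies found via Lemma~\ref{lemma:log_word_present}; attach $\partial^-=abAB$ along a $bbaBAB$ segment; fold off tagged loops of length exactly $7$ into a reservoir; randomly pair the remaining pseudorandom segments; absorb the small leftover $\Gamma$ using reservoir loops), so that all that remains is a homologically trivial, almost equidistributed collection of length-$7$ tagged loops. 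The only linear programming needed is then a single $n$-independent fact --- the cone of gluable vectors contains an open projective neighborhood of the uniform vector $\1$ (Propositions~\ref{prop:rank_2_trivalent} and~\ref{prop:higher_rank_trivalent}, proved by a one-time rank-$2$ computation plus a combinatorial reduction from higher rank) --- and foldedness is verified by hand through the construction. To repair your proposal you would have to either prove uniform-in-$n$ LP feasibility with the non-local folding constraints correctly encoded, or reduce, as the paper does, to a problem of uniformly bounded size.
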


We will prove this theorem by constructing an $f$-fatgraph $X$ for which $g:X \to R$
admits bounded $f$-folding, and then apply 
Proposition~\ref{proposition:bounded_f_folding_injective}.

In the sequel we denote generators by smaller case letters $a,b,c$ and so on, 
and their inverses by capitals; thus $A:=a^{-1}$, $B:=b^{-1}$ etc. 
Let $a,b$ be two generators of $F$, let $\partial^-$ be an oriented circle labeled with the
(reduced) cyclic word $abAB$ and let $\partial^+$ be an oriented circle labeled with the
(possibly unreduced!) word obtained by cyclically concatenating 
$\phi(b)$, $\phi(a)$, $\phi(B)$, $\phi(A)$. Note that the label on $\partial^+$ is equal
to the inverse of $\phi(abAB)$ in $F$. We will construct $X$ with $\partial S(X) = \partial^-
\cup \partial^+$ with notation as in Definition~\ref{definition:f_fatgraph}.

We build $X$ as a graph by starting with $\partial^+ \cup \partial^-$ and identifying pairs
of segments with opposite orientations and inverse labels. At each stage,
we obtain a {\em partial fatgraph} (bounding the pairs of edges that have been identified)
and a {\em remainder}. See Figure~\ref{figure11}.

\begin{figure}[ht]
\labellist
\small\hair 2pt
 \pinlabel {$a$} at 132 74
 \pinlabel {$b$} at 111 82
 \pinlabel {$b$} at 89 82
 \pinlabel {$b$} at 65 82
 \pinlabel {$a$} at 44 77
 \pinlabel {$a$} at 42 54
 \pinlabel {$B$} at 62 45
 \pinlabel {$A$} at 85 46
 \pinlabel {$B$} at 107 46
 \pinlabel {$a$} at 129 51
 
 \pinlabel {$A$} at 179 63
 \pinlabel {$A$} at 160 82
 \pinlabel {$B$} at 139 64
 \pinlabel {$A$} at 158 45

 \pinlabel {$a$} at 89 39                                                                                                                                  
 \pinlabel {$b$} at 59 28                                                                                                                                  
 \pinlabel {$b$} at 40 26                                                                                                                                  
 \pinlabel {$b$} at 16 30                                                                                                                                  
 \pinlabel {$a$} at -2 20 
 \pinlabel {$a$} at 15 8                                                                                                                                   
 \pinlabel {$B$} at 37 13                                                                                                                                  
 \pinlabel {$A$} at 59 9.5                                                                                                                                   
 \pinlabel {$B$} at 86 -1.5
 \pinlabel {$a$} at 100 18                                                                                                                                 
 
 \pinlabel {$B$} at 84 24.5                                                                                                                                  
 \pinlabel {$A$} at 79.4 19.5
 \pinlabel {$A$} at 84 14.4                                                                                                                                  
 \pinlabel {$A$} at 89 19                                                                                                                                  
 
 \pinlabel {$a$} at 196 41                                                                                                                                 
 \pinlabel {$b$} at 171 29                                                                                                                                 
 \pinlabel {$b$} at 155 26                                                                                                                                 
 \pinlabel {$b$} at 141 26                                                                                                                                 
 \pinlabel {$b$} at 124 30                                                                                                                                 
 \pinlabel {$a$} at 106 25                                                                                                                                 
 \pinlabel {$a$} at 123.5 8.5                                                                                                                                  
 \pinlabel {$B$} at 140 13.5                                                                                                                                 
 \pinlabel {$B$} at 153 13
 \pinlabel {$A$} at 171 11
 \pinlabel {$B$} at 191 -2
 \pinlabel {$a$} at 207 -2
 \pinlabel {$A$} at 212 8
 \pinlabel {$A$} at 207 14
 \pinlabel {$A$} at 203.5 20
 \pinlabel {$B$} at 206 27
 \pinlabel {$A$} at 213 34
\endlabellist
\centering
\includegraphics[scale=1.5]{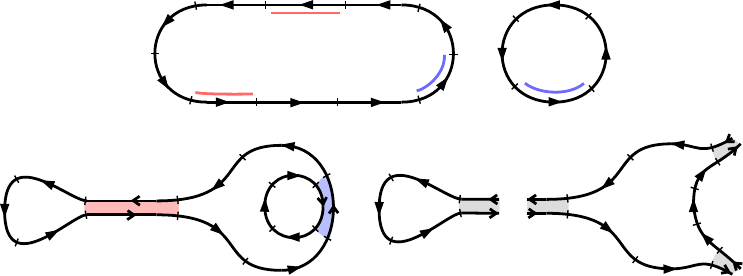}
\caption{To build a fatgraph with desired boundary loops, we can proceed by 
gluing small portions of the loops, one at a time.  After gluing a small amount of our 
loops, we obtain a partial fatgraph and the remainder, which is a 
collection of loops with tags.}
\label{figure11}
\end{figure}

When all of $\partial^+ \cup \partial^-$
has been paired (i.e.\/ when the remainder is empty), 
the result will be $X$. The proof will take up the next section.

\section{Proof of the Random $f$-folded Surface Theorem}

\subsection{Bounded folding and $f$-vertices}

The loop $\partial^-$ has length 4 and thus 4 vertices. The loop $\partial^+$ has length $4n$,
and has 4 $f$-vertices, which separate it into the segments $\phi(b)$, $\phi(a)$, $\phi(B)$
and $\phi(A)$. The map $\partial^- \to R$ is an immersion already. Also, by
Lemma~\ref{lemma:small_prefix} we have already seen that $\fold(R_1)$ is a tree in $R_1$
containing the vertex, of diameter at most $2C\log{n}$, where $C$ is as small as we like,
with probability $1-O(e^{-n^c})$. Since $h:\partial^+ \to R_1$ (obtained by applying $f$
to both sides of $\partial^- \to R$) is an immersion, it follows that $\Sigma^+:=h^{-1}(\fold(R_1))$
consists of four neighborhoods of the $f$-vertices, each of diameter at most $2C\log{n}$.

Note that $\fold(\partial^+)$ is contained in $\Sigma^+$. We fold $\Sigma$ as much as 
possible, obtaining a fatgraph $Y(\partial^+)$ as in Lemma~\ref{lemma:peripheral_tree}
with $\partial^+$ on one side of $S(Y(\partial^+))$ and with (the inverse of)
the reduced representative of this word on the other side. Denote the image of $\Sigma^+$
in this fatgraph by $\Sigma$, and let $\Sigma^-$ be the {\em reduced} words on the other
side of $S(Y(\partial^+))$ (i.e.\/ they are the reduced words obtained from the components
of $\Sigma^+$). Note that $\Sigma^-$ has at most four components, each of length
at most $2C\log{n}$. Notice also that if a component of $\Sigma^+$ can be reduced at all,
it can only be reduced by cancelling a pair of maximal inverse subwords on the sides of
the $f$-vertex, so that the peripheral trees consist of at most a single interval with
the $f$-vertex at the tip.

\begin{lemma}\label{lemma:log_word_present}
For any positive $\epsilon$, if $w$ is a random reduced word in $F$
of length at least $n\epsilon$, then for any $C' < 1/\log(2k-1)$ there is a positive $c$
(depending only on the rank of $F$), so that for any reduced word $v$ of length
$\lfloor C'\log{n} \rfloor$ we can find a copy of $v$ in $w$, with probability $1-O(e^{-n^c})$.
\end{lemma}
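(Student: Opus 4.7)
The plan is to locate a copy of $v$ in $w$ by a direct first-moment / union-bound argument applied to disjoint blocks of $w$, with the Markov dependence coming from the reducedness of $w$ handled by inserting a single buffer letter between consecutive blocks. I will work with only the initial segment of length $m := \lfloor n\epsilon \rfloor$ of $w$, since the claim is monotone in the length.

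Set $\ell := \lfloor C'\log n\rfloor$ and partition the initial segment into $N := \lfloor m/(\ell+1)\rfloor = \Theta(n/\log n)$ disjoint length-$\ell$ blocks $B_1,\ldots,B_N$ separated by one buffer letter each. Let $Y_j$ be the indicator that $B_j = v$. Because $w$ is uniform among reduced words, it is a Markov chain in which each letter is uniform over the $2k-1$ reduced continuations of the previous one. The only information from $B_1,\ldots,B_{j-1}$ that affects $Y_j$ is the final letter $\tau$ of $B_{j-1}$. Conditioned on $\tau$, the buffer letter $\sigma$ immediately before $B_j$ is uniform over the $2k-1$ letters distinct from $\tau^{-1}$; a one-line case check on whether $\tau = v_1$ shows that $P(\sigma = v_1^{-1}\mid\tau)\le 1/(2k-1)$ for every $\tau$. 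Conditioned further on $\sigma\ne v_1^{-1}$, the probability that the next $\ell$ reduced steps spell out $v$ is exactly $(2k-1)^{-\ell}$, since $v$ is reduced so consecutive letters of $v$ are never inverse. Hence, uniformly in the past,
$$P(Y_j = 1 \mid Y_1 = \cdots = Y_{j-1} = 0)\;\ge\; p \;:=\; \frac{2k-2}{2k-1}\,(2k-1)^{-\ell}.$$

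Multiplying the chain-rule factors gives
$$P(Y_1 = \cdots = Y_N = 0)\;\le\;(1-p)^N\;\le\;e^{-pN}.$$
Now $(2k-1)^{-\ell} = \Theta\!\bigl(n^{-C'\log(2k-1)}\bigr)$ and $N = \Theta(n/\log n)$, so $pN = \Theta\!\bigl(n^{1-C'\log(2k-1)}/\log n\bigr)$. The hypothesis $C' < 1/\log(2k-1)$ makes the exponent $1-C'\log(2k-1)$ strictly positive, so for any $c$ strictly less than this exponent we have $pN \ge n^c$ for $n$ large, yielding the asserted $1-O(e^{-n^c})$ bound on $P(\exists j\colon Y_j = 1)$, which certifies that $v$ appears as a subword of $w$.

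The only step requiring care is the uniform-in-past lower bound on $P(Y_j = 1\mid\text{past})$; without the buffer letter one would have $P(Y_j = 1\mid \tau) = 0$ whenever $\tau = v_1^{-1}$, and one would then have to argue that the event $\{\tau = v_1^{-1}\}$ is not too strongly correlated with the past failures $Y_1 = \cdots = Y_{j-1} = 0$. The buffer letter trivializes this: it averages over the next Markov step and absorbs any past-induced bias, reducing the whole problem to a routine geometric tail estimate. Everything else in the proof is standard.
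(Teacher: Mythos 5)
Your proof is correct, and it takes a different route from the paper: the paper does not argue the lemma directly but cites Prop.~2.3 of \cite{CW_rigidity}, which gives a precise, concentrated count of the number of copies of $v$ in a long random reduced word. Your block-plus-buffer argument is more elementary and self-contained: passing to the prefix of length $\lfloor n\epsilon\rfloor$ is legitimate (a prefix of a uniform reduced word is again uniform, with the Markov description you use), the buffer letter does exactly the job you say of uniformizing the conditional success probability at $p=\frac{2k-2}{2k-1}(2k-1)^{-\ell}$ regardless of the past, and the chain-rule bound $e^{-pN}$ with $pN=\Theta\bigl(n^{1-C'\log(2k-1)}/\log n\bigr)$ gives the claimed tail for any $c<1-C'\log(2k-1)$, which is positive precisely under the hypothesis $C'<1/\log(2k-1)$. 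Two remarks, neither a defect. First, your $c$ depends on $C'$ as well as on $k$; this is unavoidable (the exponent degenerates as $C'\to 1/\log(2k-1)$), and the paper's parenthetical ``depending only on the rank'' is simply loose on this point. Second, the lemma is invoked in the paper to find \emph{many disjoint} copies, and simultaneously for the several words labelling $\Sigma^-$; the citation to the counting result supplies this (and the stronger equidistribution used later for pseudorandomness), whereas your argument as written certifies only one copy of one word. Your decomposition absorbs both upgrades at no real cost: the number of successful blocks stochastically dominates a Binomial$(N,p)$ variable, so a Chernoff bound yields $\Omega(n^{c})$ disjoint copies with the same failure probability, and a union bound over the $O\bigl(n^{C'\log(2k-1)}\bigr)$ reduced words of length $\lfloor C'\log n\rfloor$ only shrinks $c$ slightly. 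With those two sentences added, your elementary argument covers everything the paper actually uses from this lemma.
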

For a proof, see e.g.\/ \cite{CW_rigidity}, Prop.~2.3 which gives a precise count of
the number of copies of $v$ in $w$. So if we choose $C \ll 1/\log(2k-1)$ we can find
many disjoint copies of segments in $\partial^+-\Sigma^+$ with labels inverse 
to the labels on $\Sigma^-$, and we can pair these segments. 

Next, we look for a copy of the word $bbaBAB$ in the remainder, glue the outermost
copies of $b$ and $B$, and glue the resulting $baBA$ loop to $\partial^-$. Note that
$\partial^-$ embeds into the resulting partial fatgraph, and is disjoint from 
$\Sigma$ and the $f$-vertices.

The part of the fatgraph we have built so far evidently immerses in $R$.
All that is left of the remainder are reduced cyclic words made from the
segments of $\partial^+$ which are disjoint from
$\Sigma$ and the $f$-vertices. It remains to glue up the remainder so that
the resulting fatgraph is immersed. This is a complicated combinatorial argument with
several steps, and it takes up the remainder of the section.

\subsection{Pseudorandom words}

At this stage of the construction, the remainder consists of a collection of cyclic words made from
the 9 segments in $\partial^+$ disjoint from $\Sigma$ and the $f$-vertices. We can arrange for each
of these segments to be long (i.e.\/ $O(n)$), so that in effect we can think of the remainder
as a finite collection of long reduced cyclic words in $F$ whose sum is homologically trivial.

Note that although each segment making up the cyclic words is (more or less) random, different segments
are not necessarily independent --- some of them are subwords of $\phi(a)$ and
some are subwords of $\phi(A)$, which are inverse. But each individual segment is {\em pseudorandom}
in the following sense.

\begin{definition}
For $T>0$ and $\epsilon>0$, a reduced word $w$ or cyclic word in a free group $F$ of rank $k$
is {\em $(T,\epsilon)$-pseudorandom} if, however
we partition $w$ as
$$w=w' v_1 v_2 \cdots v_\ell w''$$
where $|w'|$ and $|w''|<T$ and where $|v_i|=T$ for each $i$, and for any reduced word
$\sigma$ in $F$ of length $T$, there is an inequality
$$1-\epsilon \le \frac {\text{number of $v_i$ equal to $\sigma$}} {\ell/2k(2k-1)^{T-1}} \le 1+\epsilon$$
\end{definition}
Here the term $2k(2k-1)^{T-1}$ is simply the number of reduced words in $F$ of length $T$, so this
just says that the subwords of $w$ of length $T$ are distributed uniformly, up to multiplicative
error $\epsilon$.

Now, for any fixed $T,\epsilon$, a random reduced word in $F$ of length $n$ will be $(T,\epsilon)$-pseudorandom
for sufficiently big $n$, with probability $1-O(e^{Cn})$ for some $C>0$. This follows from
\cite{CW_rigidity}, Prop.~2.3; in fact, with probability $1-O(e^{n^c})$, one can even let $T$
grow with $n$, at the rate $T=C'\log{n}$ for suitable $C'$ (compare with Lemma~\ref{lemma:log_word_present}).

It follows that for big $n$, each individual subword $\phi(a)$, $\phi(b)$ and their inverses is 
$(T,\epsilon)$-pseudorandom with high probability, and so are all their subwords of length $\delta n$
for any fixed positive $\delta$. Thus, after the first stage of the construction, the
remainder consists of a finite collection of cyclic loops, each of which is $(T,\epsilon)$-pseudorandom for any fixed
$T,\epsilon$. Theorem~\ref{thm:random_f_folded_theorem} will therefore be proved if we can show that
any finite collection of $T,\epsilon$-pseudorandom reduced cyclic words whose sum is homologically trivial bounds
a folded fatgraph.

\subsection{Folding off short loops}

We introduce some notation to simplify the discussion in what follows.

In order to distinguish words (with a definite initial letter) from cyclic
words, we delimit a word (in our notation) by adding centered dots on both sides; thus $\cdot w \cdot$
is a word, and $w$ is the corresponding cyclic word. 

Furthermore, in the course of our folding, we will obtain segments which are in the boundary of a
partial fatgraph, and it is important to indicate which vertices have valence bigger than 2 in the
fatgraph. We will insist that all vertices of valence $>2$ in the remainder at each stage
will be 3-valent, and use the notation $\perp$ for such a vertex. If it is important to record the
label on the third edge at this vertex, we denote it $\perp^x$ where $x$ is the outgoing letter.
Thus, $\cdot ab\perp^ba\cdot$ denotes a segment in the remainder of length 3 with the label $aba$,
and after the second letter there is a 3-valent vertex in the partial fatgraph, with outgoing edge
label $b$.

So the remainder at this stage consists of a finite collection of cyclic words of the form
$\cdots u_i \perp^{x_i} u_j \perp^{x_j} u_k \cdots$ where each $u_i$ is one of the 9 segments
of $\partial^+$, and $x_i$ is the outgoing letter on the edge of the partial fatgraph built by
the identifications made so far.

Now let $w$ be a $(T,\epsilon)$-pseudorandom word. 
We perform the following process. As we read off the
letters of $w$ one by one, we look for a segment $\sigma$ of length 11 of the form $\cdot v_1Pupv_2\cdot$ 
satisfying the following properties:
\begin{enumerate}
\item{$|v_1|=|v_2|=1$ and $v_1\ne v_2^{-1}$;}
\item{$|p|=|P|=1$ and $P=p^{-1}$; and}
\item{$|u|=7$ and $u$ is {\em cyclically} reduced.}
\end{enumerate}
Then $p$ and $P$ can be glued, producing a new reduced word $w'$ containing $\cdot v_1\perp^P v_2\cdot$ 
where $\sigma$ was, and a {\em short loop} with the cyclic word $u\perp^p$ on it. We call this
operation {\em folding off a short loop}. The data of short loop is determined by a word $u$ of length
$7$ whose associated cyclic word is cyclically reduced, together with a letter $p$ not equal to
the first letter of $u$ or the inverse of the last letter. This data $(u,p)$ is called the {\em type}
of a short loop. Let $L_k$ denote the number of distinct types of short loops in a free group, so
for example $L_2=4376$.

As we read through a component of the remainder, we fold off short loops at regular intervals whenever
we can, so that the ``stems'' of the loops land at places separated by intervals of even length
(say).  To fold off a short loop, we desire the pattern described above, and the segments which 
satisfy the pattern are simply a subset of all segments of length $11$.  
Because $w$ is $(T,\epsilon)$-pseudorandom and there are finitely many types of segments, 
whenever $T$ is large enough, we will find short loops of all kinds, 
and they will be nearly equidistributed, as described more fully below.

Thus we obtain in this way a {\em reservoir} of short loops, together with the rest of the
remainder, which is a collection of long cyclic words with many trivalent vertices at the
steps of the short loops, where adjacent trivalent vertices are separated by intervals of even
length (with the possible exception of the nine trivalent vertices associated to the vertices
of the fatgraph produced at the first step). See Figure~\ref{figure12}.

\begin{figure}[ht]
\labellist
\small\hair 2pt
 \pinlabel {$A$} at 156 4
 \pinlabel {$b$} at 143 4
 
 \pinlabel {$a$} at 139 6.5
 \pinlabel {$a$} at 140 12
 \pinlabel {$a$} at 146 20
 \pinlabel {$b$} at 144 32
 \pinlabel {$a$} at 134 36
 \pinlabel {$B$} at 123 31
 \pinlabel {$a$} at 121 21
 \pinlabel {$B$} at 126 12
 \pinlabel {$A$} at 128.5 7
 
 \pinlabel {$b$} at 125 4
 \pinlabel {$a$} at 115 4
 
 \pinlabel {$b$} at 111 7                                                                                                                             
 \pinlabel {$a$} at 112 12                                                                                                                            
 \pinlabel {$a$} at 118 19                                                                                                                            
 \pinlabel {$b$} at 116 32
 \pinlabel {$a$} at 106 36                                                                                                                          
 \pinlabel {$b$} at 96 32                                                                                                                             
 \pinlabel {$a$} at 93.5 20                                                                                                                             
 \pinlabel {$a$} at 98 13                                                                                                                             
 \pinlabel {$B$} at 101 7                                                                                                                              
 
 \pinlabel {$a$} at 98 4                                                                                                                              
 \pinlabel {$b$} at 88 4                                                                                                                             
 
 \pinlabel {$a$} at 83.5 6                                                                                                                              
 \pinlabel {$B$} at 85 12                                                                                                                             
 \pinlabel {$B$} at 90 20                                                                                                                             
 \pinlabel {$A$} at 88 32                                                                                                                             
 \pinlabel {$b$} at 77 36                                                                                                                             
 \pinlabel {$A$} at 67 31                                                                                                                             
 \pinlabel {$b$} at 65 20                                                                                                                             
 \pinlabel {$A$} at 71 12                                                                                                                              
 \pinlabel {$A$} at 73 7
                                                                                                                               
 \pinlabel {$b$} at 68 4                                                                                                                              
 \pinlabel {$A$} at 58 4                                                                                                                              
 \pinlabel {$b$} at 46 4                                                                                                                              
 \pinlabel {$a$} at 33 4
                                                                                                                               
 \pinlabel {$B$} at 31.5 7                                                                                                                              
 \pinlabel {$B$} at 34 13                                                                                                                             
 \pinlabel {$a$} at 39 21                                                                                                                             
 \pinlabel {$B$} at 37 32                                                                                                                             
 \pinlabel {$B$} at 26 36
 \pinlabel {$a$} at 15 30
 \pinlabel {$B$} at 13 20
 \pinlabel {$a$} at 18.5 13
 \pinlabel {$b$} at 21 8
 
 \pinlabel {$b$} at 18 4
 \pinlabel {$a$} at 7 4

\endlabellist
\centering
\includegraphics[scale=2.0]{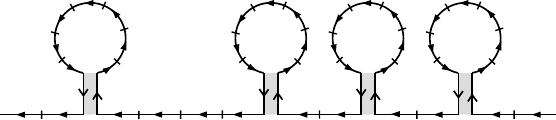}
\caption{Folding off short loops.  A loop can be folded off only when 
all of the three properties are satisfied.  This ensures the 
result is a collection of tagged loops of length exactly $7$.}
\label{figure12}
\end{figure}

Let $T$ be some big odd number. We perform this folding procedure on each successive
subword $v_i$ of a $(T,\epsilon)$-pseudorandom $w$ of length $T$ and obtain a collection of new words
$v_i'$ with trivalent vertices separated by even length intervals, 
such that length of $v_i'$ and that of $v_i$ agree mod 9. Since $T$ is odd, we distinguish between
{\em even} $v_i'$, for which the trivalent vertices are an even distance from the initial vertex, and
{\em odd} $v_i'$, for which the trivalent vertices are an odd distance from the initial vertex.

By pseudorandomness, the reservoir
consists of an {\em almost equidistributed} collection of short loops; i.e.\/ the distribution
differs from the uniform distribution by a multiplicative error of $\epsilon$. Furthermore, the
words $v_i'$ themselves are uniformly distributed with multiplicative error $\epsilon$, for each
fixed possible value of $|v_i'|$, and the same is true if one conditions on
the $v_i'$ being even or odd (in the sense above). This is because the distribution of random letters $v_2$ not equal
to $v_1^{-1}$ that follow a subword of the form $v_1Pup$ averaged over all possible $Pup$ is
just the uniform distribution on letters not equal to $v_1^{-1}$. 

\subsection{Random pairing of $v_i'$}

The $v_i'$ fall into finitely many families depending on their lengths and {\em parity} --- i.e.\/ whether
they are even or odd in the sense of the previous subsection. Moreover, for each fixed
length and parity, the distribution of words is uniform up to multiplicative error $\epsilon$. Thus, for every
reduced word $\sigma$ of suitable length, 
the number of $v_i'$ of any given parity equal to $\sigma$ and the number equal to $\sigma^{-1}$ is very nearly equal.

For each $v_i'$ let $v_i''$ denote the subword of $v_i'$ excluding the first and last letter. We call a pair
$v_i'$ and $v_j'$ {\em compatible} if they satisfy the following conditions:
\begin{enumerate}
\item{the label on $v_i''$ is $\sigma$, and the label on $v_j''$ is $\sigma^{-1}$, for some $\sigma$ reduced;}
\item{the first letter of $v_i'$ is not inverse to the last letter of $v_j'$, and conversely;}
\item{if $|\sigma|$ is even, the parities of $v_i'$ and $v_j'$ are opposite, and if $|\sigma|$ is odd,
the parities agree.}
\end{enumerate}

Let $v_i'$ and $v_j'$ be compatible. Then we can glue $v_i''$ to $v_j''$, and by condition (3)
the trivalent vertices on either side are not identified. Furthermore, because of condition (2) the
new boundary words that result from the gluing are still reduced. See Figure~\ref{figure13}.

\begin{figure}[ht]
\labellist
\small\hair 2pt
 \pinlabel {$a$} at 158.5 19
 \pinlabel {$b$} at 151 15.5
 \pinlabel {$b$} at 147 17
 \pinlabel {$B$} at 136 18
 \pinlabel {$a$} at 135 14.8
 \pinlabel {$a$} at 123 14.8
 \pinlabel {$B$} at 119.7 18
 \pinlabel {$b$} at 108.5 19
 \pinlabel {$b$} at 104.9 15
 \pinlabel {$A$} at 95 15.5
 \pinlabel {$B$} at 91.5 18
 \pinlabel {$b$} at 81 19
 \pinlabel {$b$} at 78.5 15.2
 \pinlabel {$a$} at 68 15.5
 \pinlabel {$B$} at 57 15.6
 \pinlabel {$B$} at 43 15.6
 \pinlabel {$a$} at 39.3 18
 \pinlabel {$A$} at 28.2 19
 \pinlabel {$b$} at 26.3 15.5
 \pinlabel {$a$} at 14 15.5
 \pinlabel {$b$} at 6 19
 
 \pinlabel {$b$} at 6 2
 \pinlabel {$A$} at 11 5
 \pinlabel {$A$} at 15 2
 \pinlabel {$a$} at 25.4 1
 \pinlabel {$B$} at 28 5
 \pinlabel {$b$} at 40 4.8
 \pinlabel {$a$} at 44 2
 \pinlabel {$A$} at 53.5 2
 \pinlabel {$b$} at 56 5
 \pinlabel {$A$} at 65.5 5
 \pinlabel {$A$} at 69 2
 \pinlabel {$a$} at 79 1
 \pinlabel {$B$} at 81 5
 \pinlabel {$a$} at 92 5
 \pinlabel {$B$} at 106 5
 \pinlabel {$A$} at 119 5
 \pinlabel {$b$} at 124 2
 \pinlabel {$B$} at 134 2
 \pinlabel {$A$} at 137 5.5
 \pinlabel {$B$} at 149 5
 \pinlabel {$a$} at 158 2
\endlabellist
\centering
\includegraphics[scale=2.1]{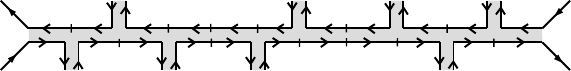}
\caption{Random pairing of the $v_i'$.  The shorter region 
which is entirely glued is $v_i''$.  Note that the initial and final letters
of the paired words are \emph{not} inverse, so the gluing is limited to 
exactly the $v_i''$.}
\label{figure13}
\end{figure}

By pseudorandomness, we can glue all but $\epsilon$ of the total length of the remainder (excluding
the reservoir) in this way, and we are left with some collection $\Gamma$ of cyclic words, where
$|\Gamma|\le \epsilon n$, plus the reservoir.

\subsection{Gluing up $\Gamma$}

The next step of the construction is elementary. We use some relatively small mass of small loops from the
reservoir to pair up with $\Gamma$, so at the end of this step we are left only with loops in the
reservoir. Furthermore, since (for a suitable choice of $\epsilon$) the mass of $\Gamma$ is so small, 
even compared to the mass of short loops of any given type, if we can do this construction
while using at most $|\Gamma|$ short loops, the content of the reservoir at the end will still be
almost equidistributed, with multiplicative error some new (but arbitrarily small) constant
$\epsilon'$.

We claim that for any positive $m$ we can glue $m-6$ (or at most $3$ if $m<7$) short loops together 
to create a trivalent partial fatgraph with unglued part a loop of length $m$. The cases $m<7$
are elementary, and for $m=7$ one can take a single short loop with no gluing at all. We prove the
general case by induction, by proving the stronger statement that the trivalent fatgraph of length
$m\ge 7$ can be chosen to contain a pair of adjacent segments in its boundary of length $4$ and $3$ each
containing no trivalent vertex in the interior. This is obviously true for $m=7$. Suppose it is true for $m$,
and denote the adjacent segments by $\cdot 4\perp 3\cdot$. We can ``bracket'' this as 
$\cdot 3 (1\perp 2)2\cdot$ and bracket another short loop as $4(1\perp 2)$. Gluing the two (bracketed)
segments of length 3, we see obtain a loop of length $m+1$ containing $\cdot 3\perp 4 \perp 2\cdot$,
completing the induction step and proving the claim. See Figure~\ref{figure14}.

\begin{figure}[ht]
\labellist
\small\hair 2pt 

\endlabellist
\centering
\includegraphics[scale=1.5]{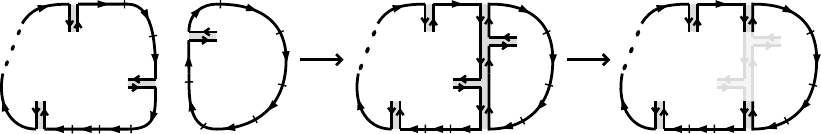}
\caption{Attaching a loop of length 7 as shown increases the total 
loop length by $1$.  By induction, we create a trivalent partial fatgraph 
with one unglued loop of any size.  We can then label the fatgraph 
arbitrarily such that the unglued loop is any desired word.}
\label{figure14}
\end{figure}

If we choose suitable short loop types,
the labels on the resulting partial fatgraph and the edges incident to the trivalent vertices
can be {\em arbitrary}, so we can build a loop that can be used to cancel a loop of $\Gamma$.

\subsection{Linear programming}

Finally, we are left with a reservoir of almost equidistributed short loops. Since our original
collection of words had homologically trivial sum, the same is true for the reservoir. We will show
that any homologically trivial collection of almost equidistributed short loops can be glued up
entirely, thus completing the construction of the $f$-folded fatgraph $X$, and the proof of
Theorem~\ref{thm:random_f_folded_theorem}. In fact, it is easier to show that a {\em multiple}
of any such collection can be glued up; thus the fatgraph we build will be assembled from the
disjoint union of copies of the partial fatgraphs built so far, glued up along the collection of
short loops, and $\partial^\pm$ will consist of the same number of disjoint copies of $[a,b]$ and $\phi([b,a])$.
This is evidently enough to prove the theorem. 

The advantage of allowing ourselves to use multiple copies is that we can find a solution to
the gluing problem ``over the rationals''. Formally, let $\L$ denote the vector space spanned by
the set of types of short loops, let $\L^+$ denote the cone of vectors with non-negative coefficients,
and let $\L^+_0$ denote the subcone of homologically trivial vectors. The ``uniform'' vector $\1$ is
the vector with all coefficients 1. This is in $\L^+_0$. Denote by $C$ the non-negative linear span
of the vectors representing collections that can be glued up. Then any rational vector in $C$ can be
``projectively'' glued up. It is easy to see that $\1$ is in $C$; we will show that $C$ contains an
open neighborhood of the ray spanned by $\1$. Since our collection of short loops is almost
equidistributed, it will be contained in this open neighborhood, and we will be done.
We call a vector {\em feasible} if it is in $C$.  

\begin{proposition}\label{prop:rank_2_trivalent}
In the above notation and in a rank $2$ free group, 
$C$ contains an open projective neighborhood of the ray spanned by $\1$.  
That is, there is some $\epsilon>0$ such that $C$ contains an 
$\epsilon$ neighborhood of $\1$ and thus contains all scalar multiples of 
this neighborhood.
\end{proposition}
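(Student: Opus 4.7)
The proposition asserts that $\1$, already known to lie in the convex rational polyhedral cone $C$, in fact lies in its relative interior within the linear subspace of $\L$ spanned by homologically trivial vectors. My strategy is the standard convex-geometric one: show (i) $\1 \in C$; (ii) the affine span of $C$ equals this entire homologically trivial subspace; and (iii) $\1$ is not on any proper face of $C$. Conditions (ii) and (iii) together imply that $\1$ is in the relative interior of $C$, and convexity of $C$ then produces an open neighborhood in the homologically trivial subspace, from which the projective conclusion follows at once.

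For (i), I would take $N$ copies of each short-loop type and construct an explicit trivalent fatgraph with boundary consisting of this multiset; the configuration is invariant under the natural signed-permutation symmetries of the generators $\{a, A, b, B\}$, so every label occurs as often as its inverse, and a pairwise matching (for instance, always pairing each short loop with its image under the inversion involution) produces a valid gluing. The authors themselves remark that this step is easy.

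For (ii) and (iii), my plan is to exhibit a rich family of small glueable configurations --- ``exchange fatgraphs'' --- each built by gluing a handful of short loops, and to verify (a) that their coefficient vectors span the homologically trivial subspace as a real vector space, and (b) that $\1$ can be written as a strictly positive rational combination of such configurations. Condition (b) witnesses that $\1$ lies off every proper face of $C$, and together with (a) this gives the desired neighborhood. The finite group $G$ of signed permutations of $\{a, A, b, B\}$ fixes $\1$ and permutes short-loop types (hence also the exchange fatgraphs), so it suffices to produce a spanning set up to the $G$-action; this cuts down the combinatorial bookkeeping considerably.

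The main obstacle is steps (ii) and (iii) taken together: producing enough small exchange fatgraphs both to span the homologically trivial subspace and to exhibit $\1$ as a positive combination. Since the number of short-loop types in rank $2$ is $L_2 = 4376$, the most robust approach is computational, by setting up the associated linear program in these variables and solving it; equivalently, one verifies directly that for every direction $w$ in the homologically trivial subspace, the perturbation $\1 + \varepsilon w$ is a feasible integer point after scaling, for $\varepsilon$ sufficiently small. This finite-but-substantial verification is presumably the explicit role played by the linear-programming language foreshadowed at the start of the section.
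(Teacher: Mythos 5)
Your plan is essentially the paper's own argument: the cone-interior criterion you describe in (ii)--(iii) is exactly the paper's Lemma~\ref{lem:cone_interior}, and the paper likewise produces a large family of small feasible vectors (found by randomized search and certified by the \texttt{scallop} linear programs) that spans the $4374$-dimensional homologically trivial subspace, then expresses $\1$ as a strictly positive combination of them via one final linear program. The only differences are cosmetic (your symmetry reduction and explicit $\iota$-pairing for $\1\in C$ are not used in the paper, which simply takes many random low-density vectors), so the proposal is correct modulo the same finite computation the paper itself relies on.
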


Proposition~\ref{prop:rank_2_trivalent} is actually proved with a computation.  
We defer the proof and show how the arbitrary rank case reduces to rank 2.  
First we give some notation necessary for the proof.
We define a function 
$\iota$ on tagged loops which takes a loop $v$ to $v^{-1}$, with the tag in the 
diametrically opposite position.  When the tag switches positions under $\iota$, 
there is a choice about what the tag becomes, because there is more than one possible tag 
at each location in a word.  
Choose tags arbitrarily such that $\iota$ is an involution.  Notice that 
for any loop $\gamma$, there is an annulus (trivalent fatgraph) with boundary 
$\gamma + \iota(\gamma)$.  We call this an \emph{$\iota$-annulus}.

\begin{proposition}\label{prop:higher_rank_trivalent}
In the above notation and for any finite rank free group, 
$C$ contains an open projective neighborhood of the ray spanned by $\1$.
\end{proposition}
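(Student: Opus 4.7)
The plan is to reduce Proposition~\ref{prop:higher_rank_trivalent} to the rank-$2$ case Proposition~\ref{prop:rank_2_trivalent} by combining it, applied to each pair of generators, with the $\iota$-annulus construction and the symmetry of $C$ under the signed permutation group $W_k := S_k \ltimes (\Z/2)^k$ that acts on $F$ by permuting and inverting generators. This group acts on $\L$, preserves $C$ (since it carries feasible gluings to feasible gluings), and fixes the uniform vector $\1$.

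For each unordered pair $\{a_i, a_j\}$ of generators, let $F_{ij} := \langle a_i, a_j\rangle \subset F$, and let $\L_{F_{ij}} \subset \L$ denote the subspace spanned by those short loop types whose cyclic word and tag use only the letters $\{a_i^{\pm 1}, a_j^{\pm 1}\}$; let $\1_{F_{ij}} \in \L_{F_{ij}}$ be the corresponding uniform vector. Any rank-$2$ gluing in $F_{ij}$ is in particular a rank-$k$ gluing, so $C_{F_{ij}} \subseteq C$, and Proposition~\ref{prop:rank_2_trivalent} applied to $F_{ij}$ places an open neighborhood of $\1_{F_{ij}}$ inside $C \cap \L^+_{0, F_{ij}}$. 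On the other hand, for every short loop type $\gamma$ the $\iota$-annulus gives $\1_\gamma + \1_{\iota\gamma} \in C$; partitioning all short loop types into $\iota$-orbits and summing shows that the entire cone of $\iota$-symmetric non-negative vectors is contained in $C$. In particular, the complement vector $\1 - \1_{F_{ij}}$, which is both non-negative and $\iota$-symmetric, lies in $C$.

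Combining these two ingredients yields the key step: for any sufficiently small $\delta \in \L_{0, F_{ij}}$, the perturbation $\1_{F_{ij}} + \delta$ lies in $C_{F_{ij}} \subseteq C$, hence
\[
\1 + \delta \;=\; (\1_{F_{ij}} + \delta) + (\1 - \1_{F_{ij}}) \;\in\; C,
\]
producing an open neighborhood of $\1$ in the affine slice $\1 + \L_{0, F_{ij}}$. Applying the $W_k$-action then gives analogous neighborhoods for every pair of generators, which together cover all perturbation directions in $\L_0$ supported on short loop types that use at most two distinct generators; and arbitrary $\iota$-symmetric perturbations are handled directly by $\iota$-annuli.

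The main obstacle, which is not immediate from the preceding ingredients, is to cover the remaining directions: $\iota$-anti-symmetric perturbations of $\1$ supported on short loop types that use three or more distinct generators, since these lie in no rank-$2$ slice $\L_{F_{ij}}$. The plan for handling them is to construct explicit rank-$k$ gluings realizing such directions, for instance by starting from a rank-$2$ gluing in $F_{ij}$ and substituting a distinguished occurrence of some letter by a new generator $a_\ell$ in a controlled way so as to produce, with positive coefficient, a short loop of the desired higher-support type. Together with $W_k$-symmetry, enough such explicit constructions should span the remaining $\iota$-anti-symmetric directions in $\L_0$, yielding the required open projective neighborhood of $\1$ in $C$.
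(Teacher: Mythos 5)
Your first two steps are sound and broadly parallel to part of the paper's strategy: rank-$2$ gluings are in particular rank-$k$ gluings, so Proposition~\ref{prop:rank_2_trivalent} applied to each pair $\{a_i,a_j\}$ gives perturbation directions of $\1$ supported on two-generator loop types, and $\iota$-annuli give all $\iota$-symmetric non-negative vectors, hence all $\iota$-symmetric directions; convexity of $C$ then lets you combine them. But the ``main obstacle'' you name at the end is not a residual technicality --- it is the entire content of the higher-rank reduction, and your proposal does not actually address it. When $k\ge 3$, the loop types using three or more distinct generators make up the bulk of the coordinates of $\L$, and their $\iota$-anti-symmetric perturbation directions lie in no rank-$2$ slice and are not $\iota$-symmetric, so nothing you have proved reaches them. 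Your suggested remedy (``substituting a distinguished occurrence of some letter by a new generator in a rank-$2$ gluing'') is a heuristic, not a construction: substitution of a single letter in one boundary loop of a glued-up trivalent fatgraph changes the labels of edges that appear in \emph{other} boundary loops as well, and there is no argument that the resulting collections of directions span the missing anti-symmetric subspace of $\L_0$.

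The paper closes exactly this gap with a different mechanism. It defines \emph{fatgraph equivalence}: if a trivalent fatgraph has boundary $\gamma+\sum_i\alpha_i$, then (modulo adding $\iota$-annuli, i.e.\ multiples of $\1$) one may replace $\gamma$ by $\sum_i\iota(\alpha_i)$ in any gluing problem. Lemma~\ref{lem:equiv_to_rank_2} then produces an explicit trivalent fatgraph with boundary $abcx+Bc+bA+CaX$ (where $a,b,c$ are maximal runs of distinct generators), showing that any loop with at least three generators is fatgraph equivalent to two two-generator loops plus a loop with strictly fewer runs; induction reduces every short loop to two-generator loops. Only after this reduction does the rank-$2$ proposition enter, applied to each family $S'_{x,y}$, with a further adjustment (multiplying by $7$ and redistributing single-generator loops, or inserting $\iota$-pairs) to make each family homologically trivial --- a point your per-slice argument also implicitly needs. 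Without an analogue of Lemma~\ref{lem:equiv_to_rank_2}, or some other concrete supply of gluings hitting the high-support anti-symmetric directions, your argument establishes openness of $C$ along only a proper subspace of $\L_0$ and does not prove the proposition.
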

\begin{proof}
Suppose we are given any vector $v \in \L'^+_0$.  We must show that there is some 
$n$ such that $v + n\1$ bounds a trivalent fatgraph.  This will prove 
the proposition.  Therefore, we need to understand when a collection 
of loops, plus an arbitrary multiple of $\1$, bounds a trivalent fatgraph.

Given a collection of tagged loops $S$ and a single loop $\gamma$, 
suppose we can exhibit a trivalent fatgraph $Y$ with boundary 
$\gamma + \sum_i \alpha_i$.  We say that $\gamma$ is 
\emph{fatgraph equivalent} to $\sum_i\iota(\alpha_i)$.  Now, if 
we can find a trivalent fatgraph with boundary $S + \sum_i \iota(\alpha_i)$, 
then the union of this trivalent fatgraph with $Y$ has boundary 
$S + \sum_i\iota(\alpha_i) + \alpha_i + \gamma$.  In other words, 
we have a trivalent fatgraph with boundary $S + \gamma$, plus two $\iota$ pairs.  
If we then add all other remaining $\iota$ pairs, we can simply add 
$\iota$-annuli to get a trivalent fatgraph with boundary $S + \gamma + \1$.

Therefore, for the purpose of finding a trivalent fatgraph with a given boundary $S$ 
modulo adding some multiple of $\1$, if we have some loop $\gamma \in S$, 
and we find a collection of loops $\sum_i \alpha_i$ which is fatgraph equivalent to $\gamma$, 
then we can throw out $\gamma$ from $S$, replace it with $\sum_i \alpha_i$, and find 
a trivalent fatgraph bounding what remains.
To reduce to rank $2$, we'll show that any loop is fatgraph equivalent to a collection 
of loops, each of which contains only two generators.  After showing this, 
we'll explain how to apply Proposition~\ref{prop:rank_2_trivalent} to complete 
the proof.

\begin{lemma}\label{lem:equiv_to_rank_2}
Every loop is fatgraph equivalent to a collection of loops, 
each containing at most two generators.
\end{lemma}
\begin{proof}
To show that a loop is fatgraph equivalent to another collection of 
loops, note that it suffices to show it for \emph{un}tagged loops, provided 
there are positions on the fatgraph $Y$ to place tags.  This is because 
a loop is fatgraph equivalent to itself with a different tag position, for 
$\gamma + \gamma'^{-1}$ bounds a trivalent tagged annulus, where $\gamma'^{-1}$ 
is $\iota(\gamma)$ with the tag in a different position.

Thus, suppose we are given a loop $\gamma$ containing more than $2$ generators.  
We partition $\gamma$ into \emph{runs} of a single generator, and because $\gamma$ 
has at least $3$ generators, we can write, $\gamma = abcx$, where $a$, $b$, and $c$ 
are runs of distinct generators, and $x$ is the remainder of $\gamma$, which might be 
empty.  Abusing notation, we will write $a$, $b$, $c$ to denote 
a run of any size of the $a$, $b$, $c$ generators.
The trivalent fatgraph shown in Figure~\ref{fig:reduce_to_rank_2} 
has boundary of the form $abcx + Bc + bA + CaX$.  Note that this 
fatgraph $\emph{is}$ trivalent; it cannot fold by the assumption that 
$a$, $b$, $c$ are maximal runs of distinct generators.  Also, the 
edge lengths can be chosen so that all the loops have size $7$.

\begin{figure}[ht]
\labellist
\small\hair 2pt
 \pinlabel {$a$} at 34 74
 \pinlabel {$b$} at 55 34
 \pinlabel {$c$} at 26 11
 \pinlabel {$x$} at 10 48
 
 \pinlabel {$B$} at 70 34
 \pinlabel {$B$} at 81 60
 \pinlabel {$c$} at 82 22
 
 \pinlabel {$A$} at 43 88
 \pinlabel {$b$} at 81 76
 \pinlabel {$A$} at 72 97
 
 \pinlabel {$X$} at -5 50
 \pinlabel {$C$} at 15 -3
 \pinlabel {$C$} at 94 11
 \pinlabel {$a$} at 100 104
\endlabellist
\centering
\includegraphics[scale=1.0]{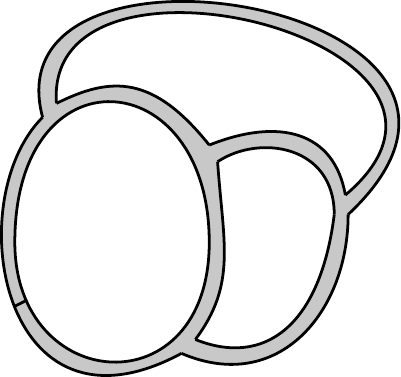}
\caption{A trivalent fatgraph showing how a loop of the form $abcx$ is fatgraph 
equivalent to two loops containing only two generators, plus a loop of the 
form $XcA$.}
\label{fig:reduce_to_rank_2}
\end{figure}

Therefore, $abcx$ is fatgraph equivalent to two loops with two 
generators, plus one loop of the form $Acx$.  The latter loop 
has, then, fewer runs, and we can repeat this procedure until $Acx$ contains 
only two generators.  At that point, we have shown that $\gamma$ 
is fatgraph equivalent to a collection of loops, each of which contains at 
most two generators. 
\end{proof}

Now we can complete the proof of Proposition~\ref{prop:higher_rank_trivalent}.
Given a vector $v \in \L'^+_0$, we may take a sufficient multiple to assume that 
$v$ is integral.  Let $S$ be the collection of loops represented by $v$.  
We've shown that $S$ is fatgraph equivalent to a collection of loops $S'$, 
where each loop in $S'$ contains at most two generators.  We can write 
$S' = \bigcup_{\{x,y\}} S'_{x,y}$, where each $S'_{x,y}$ contains the loops in $S'$ 
containing the generators $x$ and $y$.  There is an ambiguity about where to 
put loops which contain a single generator; place them arbitrarily, though we will rearrange 
them presently.  
Now, each collection $S'_{x,y}$ might not be homologically trivial.  
However, after multiplying the original vector $v$ by $7$, we may assume that the 
homological defect of each generator in each collection is a multiple of $7$.
Therefore, we can make each collection $S'_{x,y}$ homologically trivial by redistributing 
the loops consisting of a single generator (or, if necessary, introducing $\iota$ pairs).

Since each collection $S'_{x,y}$ is homologically trivial, we can apply 
Proposition~\ref{prop:rank_2_trivalent} to find a trivalent fatgraph with boundary 
$S'_{x,y} + \1_{x,y}$, where $\1_{x,y}$ denotes the uniform vector 
in the set of loops containing generators $x$ and $y$.  Taking the union of 
these fatgraphs over all $x$, $y$, yields a trivalent fatgraph 
which shows that $S' = \bigcup_{\{x,y\}}S'_{x,y}$ is fatgraph equivalent to a 
collection of $\iota$ pairs, and therefore fatgraph equivalent to the empty set.  
That is, the collection $S$ represented by our original vector $v$ is 
fatgraph equivalent to the empty set, which is to say that there is a 
trivalent fatgraph with boundary $S + n\1$, for $n$ sufficiently large.  
This completes the proof.
\end{proof}

We now give the proof in the rank $2$ case, or rather, a description 
of the computation which proves it.

\begin{proof}[Proof of Proposition~\ref{prop:rank_2_trivalent}]
We wish to show that the cone $C$ contains an open projective 
neighborhood of $\1$.  To start, we describe some necessary background about 
cones and linear programming.
Determining if a point lies in the interior of the cone on 
a set of vectors can be phrased as a linear programming problem by using the 
following lemma.

\begin{lemma}\label{lem:cone_interior}
Let $x,v_1, \ldots, v_k \in \R^n$.  If the $v_i$ span $\R^n$ and there is an expression
$x = \sum_it_iv_i$ with $t_i > 0$ for all $i$, then $x$ lies in the 
interior of the cone spanned by the $v_i$.
\end{lemma}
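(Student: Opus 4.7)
The plan is to exhibit an explicit open neighborhood of $x$ contained in the cone, by perturbing the coefficients in the given expression while keeping them positive. The spanning hypothesis guarantees room to absorb the perturbation.

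First, since $v_1,\ldots,v_k$ span $\R^n$, I would extract a basis: after reindexing, assume $v_1,\ldots,v_n$ form a basis of $\R^n$. Split the given expression as
\[
x = \sum_{i\le n} t_i v_i + \sum_{i>n} t_i v_i,
\]
and treat the second sum as a fixed quantity $z := \sum_{i>n} t_i v_i$.

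Next, for an arbitrary $y \in \R^n$, since $v_1,\ldots,v_n$ is a basis there exist unique coefficients $s_1(y),\ldots,s_n(y)$, depending linearly (hence continuously) on $y$, such that
\[
y - z = \sum_{i\le n} s_i(y) v_i.
\]
When $y = x$, uniqueness of the basis expansion forces $s_i(x) = t_i > 0$. By continuity, there is an open neighborhood $U$ of $x$ on which $s_i(y) > 0$ for all $i \le n$. For any $y \in U$ we then have
\[
y = \sum_{i\le n} s_i(y) v_i + \sum_{i>n} t_i v_i,
\]
a non-negative combination of the $v_i$, so $y$ lies in the cone. Hence $x$ is an interior point.

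This is a routine linear-algebra/convexity fact, so I do not anticipate any real obstacle; the only step requiring care is verifying that a basis can be extracted from the spanning set $\{v_i\}$, which is standard, and that the linear maps $y \mapsto s_i(y)$ are continuous, which is automatic in finite dimensions.
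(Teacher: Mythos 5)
Your proof is correct, but it takes a different route from the paper. You extract a basis $v_1,\dots,v_n$ from the spanning set, freeze the remaining terms as $z=\sum_{i>n}t_iv_i$, and use the continuity of the basis coordinates of $y-z$ to produce an explicit open neighborhood of $x$ consisting of points that are still non-negative (indeed positive on the basis part) combinations of the $v_i$; uniqueness of basis expansion correctly identifies $s_i(x)=t_i>0$, so the argument goes through with no gaps. The paper instead argues by duality: it reduces interiority to showing that $x$ lies on no supporting hyperplane $H$ of the cone, picks some $v_j\notin H$ (possible since the $v_i$ span $\R^n$), and uses $t_j>0$ together with the one-sidedness of the cone relative to $H$ to conclude $x\notin H$. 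Your version is more elementary and self-contained --- it exhibits an actual ball inside the cone and does not need the supporting hyperplane theorem (that every boundary point of a convex set lies on a supporting hyperplane) to pass from ``not on a supporting hyperplane'' to ``interior'' --- while the paper's version is shorter and is phrased in the hyperplane/duality language that matches the linear programming certificates used in the rest of the section. Either argument suffices for the application.
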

\begin{proof}
To show that $x$ lies in the interior, it suffices to show that 
$x$ is not contained in any supporting hyperplane.  Thus, let $H$ 
be any supporting hyperplane for the cone spanned by the $v_i$.  
Because the $v_i$ span $\R^n$, there is some $j$ so that $v_j$ is 
not in $H$.  We have expressed $x = \sum_it_iv_i$ with, in particular, $t_j>0$.  
Therefore, if we decompose $\R^n = H \oplus \textnormal{span}\{v_j\}$, 
and express $x$ in this decomposition, 
we will find that the coefficient of $v_j$ is not zero, so $x$ is 
not contained in $H$.
\end{proof}

Using Lemma~\ref{lem:cone_interior}, we observe that if we are given 
the $v_i$ as the columns of the matrix $M$, and $x$ is a column vector, 
then a feasible point $y$ for the problem $Ay = x$, $y\ge 1$ 
provides a certificate that $x$ is contained in the interior.  
Feasibility testing can be phrased as a linear programming 
problem by setting the objective function to zero.  We remark that the lower bound 
on $y$ is arbitrary; if $x$ is in the interior, the linear program will 
succeed for \emph{some} lower bound, but we don't know a priori what it is.  

The proof of Proposition~\ref{prop:rank_2_trivalent} therefore reduces 
to the following computation.
\begin{enumerate}
\item Find a collection of vectors $V$ in the cone $C$ which together 
span the space $\L_0$.
\item Show that the uniform vector $\1$ lies in the cone on $V$.
\end{enumerate}
To find $V$, we simply tried many random vectors in $\L_0$ and 
checked if they were contained in $C$ by checking if they 
bounded a trivalent fatgraph.
Both steps (1) and (2) require linear programming:
in order to check that a 
vector bounds a trivalent fatgraph, we solve a linear programming problem 
derived from the \texttt{scallop} algorithm (\cite{scallop}), 
and to show that the uniform vector 
lies in the cone on $V$, we solve the linear programming problem 
derived from Lemma~\ref{lem:cone_interior}.  

In order to make the many linear programming problems in step (1) feasible, 
we need to choose \emph{low-density} vectors; that is, vectors with a 
small number of nonzeros.  Recall there are $4376$ short loops of 
length $7$ and rank $2$, and the space of homologically trivial 
linear combinations has dimension $4374$.  
We found a collection of $9626$ vectors, each with $8$ nonzeros, 
which span this $4374$-dimensional space.  This required solving 
a few tens of thousands of small linear programming problems (i.e. runs of \texttt{scallop}), 
which was easily accomplished using the linear programming backend 
\texttt{GLPK}\cite{GLPK}.
As we built this collection, we occasionally ran a much larger 
linear program to determine if the uniform vector was contained 
in the cone (not necessarily in the interior, as that is more difficult to 
solve in practice).  Once our cone did contain the uniform vector, 
we ran one final linear program to verify that it lay in the interior.

Even though these latter linear programs had only a few thousand columns 
and a few thousand rows, they proved quite difficult in practice.  
Fortunately, the proprietary software package 
\texttt{Gurobi}\cite{gurobi}, which offers a free academic license, 
was able to solve them in a few minutes.  We used \texttt{Sage}\cite{sage} 
to facilitate many of the final steps.
\end{proof}

\begin{remark}
It is important to highlight that the initial steps of the proof of the 
random $f$-folded surface theorem reduce the problem 
of finding a folded surface whose boundary is a given random loop to the 
problem of showing that a collection of tagged loops of 
a \emph{uniformly bounded} size ($7$) bounds a folded fatgraph, provided 
this collection is sufficiently close to uniform.  Proposition~\ref{prop:higher_rank_trivalent} 
shows that, indeed, a collection of tagged loops of size $7$ 
sufficiently close to uniform does bound a folded fatgraph.  We emphasize 
that this linear programming is done \emph{once} to solve this 
latter, uniformly bounded, one-time problem.
\end{remark}

This completes the proof of Theorem~\ref{thm:random_f_folded_theorem}.

\subsection{Sapir's group}\label{subsection:Sapir_group}

\begin{definition}
We define {\em Sapir's group} $C$ to be the HNN extension of $F_2:=\langle a,b\rangle$ by the
endomorphism $\phi:a \to ab, b \to ba$.
\end{definition}

In \cite{Sapir}, Problem.~8.1, Sapir posed explicitly the problem of determining whether $C$
contains a closed surface subgroup, and in fact conjectured (in private communication) that
the answer should be negative. This group was also studied by Crisp--Sageev--Sapir and
(independently) Feighn, who sought to find a surface subgroup or show that one did not exist.
Because of the attention this particular question has attracted, we consider it significant
that our techniques are sufficiently powerful to give a positive answer:

\begin{theorem}
Sapir's group $C$ contains a closed surface subgroup of genus 28.
\end{theorem}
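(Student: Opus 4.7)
The plan is to construct an explicit $f$-folded $f$-fatgraph $X$ over the rose $R$ for $F_2$ with $-\chi(S*_f(X))=54$, and then invoke Proposition~\ref{proposition:f_folded_injective} to upgrade $\pi_1(S*_f(X))$ to a surface subgroup of $C$ of genus $28$.

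First I would check that the technical machinery of \S\ref{subsection:bounded_folding} is unnecessary for this endomorphism, because Sapir's $f:R\to R$ is in fact an immersion. Indeed, at the unique vertex of $R$ the map $f$ induces the bijection on germs $a\mapsto a$, $b\mapsto b$, $A\mapsto B$, $B\mapsto A$ (read off the first and last letters of $\phi(a),\phi(b),\phi(A),\phi(B)$); these four images are distinct, so $f$ is locally injective. Hence for any $f$-folded $f$-fatgraph $X$, Proposition~\ref{proposition:f_folded_injective} directly provides $\pi_1$-injectivity of $S*_f(X)\to K$, and we do not need to verify bounded $f$-folding.

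Second, I would set up the search for $X$ as a linear programming problem in polygon weights, paralleling the approach of \S\ref{section:traintrack_rationality}. Choose an immersed traintrack $T\to R$ rich enough to carry candidate boundary loops $\partial^-\cup\partial^+$ of bounded word length (for example, the Whitehead-style traintrack on reduced words of some fixed length). The variables are polygon weights in $P^+(T)$; the constraints are: (a) the polygon gluing conditions that cut out $P^+(T)$ inside the ambient space; (b) a pairing condition forcing the $\partial^+$-portion of the weight to equal the image under $f'$ of the $\partial^-$-portion, encoding Definition~\ref{definition:f_fatgraph}; and (c) combinatorial constraints enforcing conditions (2)--(4) of Definition~\ref{definition:f_folded} for the distribution of $f$-vertices across incident rectangles and triangles. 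The objective is to minimize $-\chi$, which equals half the total triangle weight.

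Third, I would solve this LP by computer using the \texttt{scallop}-derived tooling (\cite{scallop}) and the \texttt{GLPK} or \texttt{Gurobi} backends, exactly as in the proof of Proposition~\ref{prop:rank_2_trivalent}. A rational optimum, scaled to an integer point, would then be converted via Proposition~\ref{proposition:can_be_reduced} into an explicit fatgraph $X$; one then checks directly that the output satisfies Definition~\ref{definition:f_folded}, and computes $-\chi(S*_f(X))=54$, i.e.\ $S*_f(X)$ is a closed orientable surface of genus $28$. Proposition~\ref{proposition:f_folded_injective} then finishes the proof. The principal obstacle is not the injectivity step nor the final verification, but the combinatorial setup of constraint family (c): the $f$-folded conditions are not purely linear and must be imposed by branching or by restricting a priori to a finite family of polygon types for which they become linear, and one must choose this family small enough to keep the LP tractable yet large enough to actually reach the value $-\chi=54$. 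Striking this balance is where the real computational work lies.
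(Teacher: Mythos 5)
Your overall strategy is the same as the paper's: produce an explicit $f$-folded $f$-fatgraph and invoke Proposition~\ref{proposition:f_folded_injective}, and your preliminary observation is correct and matches what the paper implicitly uses --- for $\phi:a\mapsto ab$, $b\mapsto ba$ the induced map on germs at the vertex of $R$ is $a\mapsto a$, $b\mapsto b$, $A\mapsto B$, $B\mapsto A$, so $f$ is an immersion and the bounded-folding machinery of \S\ref{subsection:bounded_folding} is not needed.

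However, there is a genuine gap: the entire content of this theorem is the existence of a specific certificate, and your proposal never produces one, nor does it show that your search scheme could. You concede that the constraints encoding conditions (2)--(4) of Definition~\ref{definition:f_folded} are not linear in polygon weights and would have to be handled by branching or by an a priori restriction of polygon types, but you give no argument that the resulting (much harder, non-LP) search is feasible or that any feasible point exists; the value $-\chi=54$ is simply asserted to match genus $28$ rather than derived from anything. Moreover, your constraint (b), which forces the $\partial^+$ data to be the image of the $\partial^-$ data under a single application of $f$ with degree-one matching, is too rigid to capture the surface the paper actually exhibits: the certificate in Figure~\ref{sapir_example} is a fatgraph bounding $3\cdot babaBABA+\phi^4(babaBABA)^{-3}$ --- a fourth power of $\phi$ and unequal multiplicities on the two sides --- and one must then pass to a $3$-fold cover so that the six boundary components can be glued in pairs inside $F*_\phi$ (equivalently, the surface group is first found in $F*_{\phi^4}\le F*_\phi$). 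Nothing in your setup allows higher powers of $\phi$, unequal boundary multiplicities, or passage to covers, so even granting unlimited computation it is not clear your program would terminate with a solution, let alone the genus-$28$ one. The injectivity step and the reduction to checking Definition~\ref{definition:f_folded} are fine; what is missing is the explicit $f$-folded surface itself, which is precisely what the paper supplies.
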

\begin{proof}
The theorem is proved by exhibiting an explicit $f$-folded surface. Figure~\ref{sapir_example}
indicates a fatgraph whose fattening has four boundary components, three of which are (conjugates
of) $babaBABA$ and the fourth of which is $\phi^4(babaBABA)^{-3}$. The blue circles mark
the $babaBABA$ components. By taking a 3-fold cover we obtain a fatgraph whose fattening has
six boundary components, three of which are conjugates of $(babaBABA)^3$, and three of which are
conjugates of $\phi^4(babaBABA)^{-3}$. In the HNN extension $F*_\phi$ we can glue these boundary
components in pairs, giving a closed surface $S$ together with a map $\pi_1(S) \to F*_\phi$.

The surface is $f$-folded, and therefore the resulting map of the surface group is injective. To
see this, note that the $babaBABA$ components are disjoint from each other,
the underlying fatgraph is Stallings folded, and the
$f$-vertices (indicated in red) are all 2-valent. 
\begin{figure}[htpb]
\centering
\includegraphics[scale=0.3]{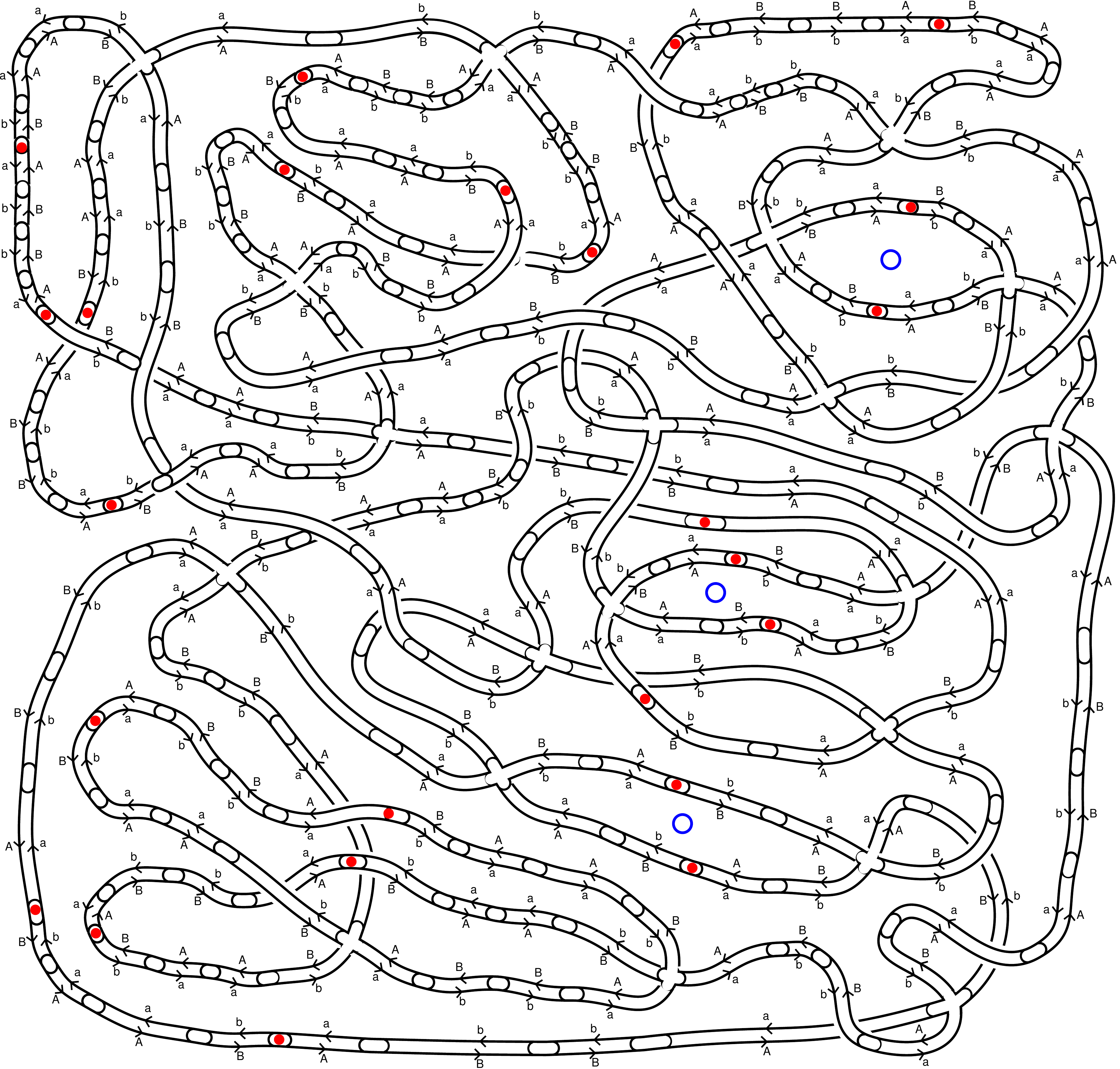}
\caption{A fatgraph bounding $3\cdot babaBABA+\phi^4(babaBABA)^{-3}$}\label{sapir_example}
\end{figure}
\end{proof}

\begin{remark}
In fact, Sapir expressed the opinion that ``most'' ascending
HNN extensions of free groups should not contain surface subgroups, which is contradicted by
the Random $f$-folded Surface Theorem~\ref{thm:random_f_folded_theorem}. On the other hand,
the probabilistic estimates involved in the proof of this theorem are only relevant for
endomorphisms taking generators to very long words, and therefore Sapir's group seems to be
an excellent test case.
\end{remark}

\end{document}